\newcommand{\N}{\mathbb{N}}
\newcommand{\R}{{\mathbb{R}}}
\newcommand{\C}{{\mathbb{C}}}
\newcommand{\Z}{{\mathbb{Z}}}
\newcommand{\dd}{{{\rm d}}}
\newcommand{\ii}{{\rm i}}
\newcommand{\e}{{\rm e}}
\newcommand{\cf}{\emph{cf.}}
\newcommand{\ie}{{\emph{i.e.}}}
\newcommand{\eg}{{\emph{e.g.}}}
\newcommand{\ov}{\overline}
\newcommand\ds{\displaystyle}
\newcommand{\la}{\lambda}
\newcommand{\eps}{\varepsilon}
\renewcommand{\H}{{\mathcal{H}}}
\newcommand{\spd}{\sigma_{\rm disc}}
\newcommand{\se}[1]{\sigma_{\rm e#1}}
\newcommand{\spp}{\sigma_{\rm p}}
\newcommand{\essinf}{\operatorname*{ess \,inf}}
\newcommand{\esssup}{\operatorname*{ess \,sup}}
\newcommand{\Dom}{{\operatorname{Dom}}}
\newcommand{\Ker}{{\operatorname{Ker}}}
\newcommand{\Ran}{{\operatorname{Ran}}}
\renewcommand{\Re}{\operatorname{Re}}
\renewcommand{\Im}{\operatorname{Im}}
\newcommand{\dist}{\operatorname{dist}}
\newcommand{\sgn}{\operatorname{sgn}}
\newcommand{\supp}{\operatorname{supp}}
\newcommand{\loc}{\mathrm{loc}}
\newcommand{\BigO}{\mathcal{O}}
\newcommand{\os}{{\rm o}}
\newcommand{\Rd}{\mathbb{R}^d}
\newcommand{\LtlocOm}{{L^2_{\loc}(\Omega)}}
\newcommand{\LtlocOmR}{{L^2_{\loc}(\Omega;\R)}}
\newcommand{\LOm}{{L^2(\Omega)}}
\newcommand{\LolocOmR}{{L^1_{\loc}(\Omega;\R)}}
\newcommand{\LiRd}{{L^{\infty}(\Rd)}}
\newcommand{\CcRd}{{C_0^{\infty}(\Rd)}}
\newcommand{\CcOm}{{C_0^{\infty}(\Omega)}}
\newcommand{\WotRd}{{W^{1,2}(\Rd)}}
\newcommand{\WotOmD}{{W^{1,2}_0(\Omega)}}
\newcommand{\Dt}{-\frac{\dd^2}{\dd x^2}}
\newcommand{\DD}{\Delta_{\rm D}}
\newcommand{\wto}{\xrightarrow{w}}
\newcommand{\lto}{\longrightarrow}
\theoremstyle{plain}
\newtheorem{theorem}{Theorem}[section]
\newtheorem{lemma}[theorem]{Lemma}
\newtheorem{proposition}[theorem]{Proposition}
\theoremstyle{definition}
\newtheorem{remark}[theorem]{Remark}
\newtheorem{asm}{Assumption}
\newcommand\cB{\mathcal B}
\newcommand\cC{\mathcal C}
\newcommand\cD{\mathcal D}
\newcommand\cH{\mathcal H}
\newcommand\fra{\mathfrak a}
\newcommand\frq{\mathfrak q}
\newcommand{\Num}{\operatorname{Num}}
\newcommand{\Cla}{\C \setminus (-\infty,0]}
\newcommand{\ClaO}{\C \setminus (-\infty,0)}
\newcommand{\Claq}{\C \setminus (-\infty,-\alpha_q]}
\newcommand{\ai}{a_{\rm \inf}}
\newcommand{\ar}{a_{\rm r}}
\newcommand{\as}{a_{\rm s}}
\newcommand{\Ms}{M_{\rm s}}
\newcommand{\qr}{q_{\rm r}}
\newcommand{\qs}{q_{\rm s}}
\numberwithin{equation}{section}
\begin{document}


\title{The damped wave equation with unbounded damping}

\author{Pedro Freitas}
\address[Pedro Freitas]{
Departamento de Matem\'{a}tica, Instituto Superior T\'{e}cnico, Universidade de Lisboa, Av. Rovisco Pais, 1049-001 Lisboa, Portugal
\&
Grupo de F\'{\i}sica Matem\'{a}tica, Faculdade de Ci\^{e}ncias, Universidade de Lisboa, Campo Grande, Edif\'{\i}cio C6,
1749-016 Lisboa, Portugal }
\email{psfreitas@fc.ul.pt}
%
%
\author{Petr Siegl}
\address[Petr Siegl]{
Mathematisches Institut, 
Universit\"{a}t Bern,
Alpeneggstr.~22,
3012 Bern, Switzerland
\& On leave from Nuclear Physics Institute CAS, 25068 \v Re\v z, Czechia}
\email{petr.siegl@math.unibe.ch}

\author{Christiane Tretter}
\address[C.\ Tretter]{
	Mathematisches Institut, 
	Universit\"{a}t Bern,
	Sidlerstrasse 5,
	3012 Bern, Switzerland
}
\email{tretter@math.unibe.ch}

\thanks{P.F.\ was partially supported by the Funda\c c\~{a}o para a Ci\^{e}ncia e Tecnologia, Portugal, through project PTDC/MAT-CAL/4334/2014.
The research of P.S.\ is supported by the \emph{Swiss National Science Foundation} Ambizione grant No.~PZ00P2\_154786.
C.T.\ gratefully acknowledges the support of the \emph{Swiss National Science Foundation}, SNF, grant no.\ $169104$.
}

\subjclass[2010]{35L05, 35P05, 47A56, 47D06}

\keywords{damped wave equation, unbounded damping, essential spectrum, quadratic operator function with unbounded coefficients,
Schr\"odinger operators with complex potentials}

\date{February 20, 2018}

\begin{abstract}
We analyze new phenomena arising in linear damped wave equations on unbounded domains when the damping is allowed to become unbounded at infinity.
We prove the generation of a contraction semigroup, study the relation between the spectra of the semigroup generator and the associated quadratic
operator function, the convergence of non-real eigenvalues in the \mbox{asymptotic} regime of diverging damping on a
subdomain, and we~investigate the appearance of essential spectrum on the negative real axis. We~further show that the presence of the latter prevents
exponential estimates for the semigroup and turns out to be a robust effect that cannot be easily canceled by adding a positive potential.
These analytic results are illustrated by examples.
\end{abstract}

\maketitle

\section{Introduction}

We consider the spectral problem associated with the linearly damped wave equation 
\begin{equation}\label{DWE.me}
\begin{aligned}
u_{tt}(t,x) + 2 a(x) u_t(t,x) = (\Delta - q(x)) u(t,x), \quad t > 0, \quad x \in \Omega ,
\end{aligned}
\end{equation}
with non-negative damping $a$ and potential $q$ on an open (typically unbounded) subset $\Omega$ of $\R^d$; when $\Omega$ is not all of $\R^d$ 
we shall impose Dirichlet boundary conditions on its boundary $\partial \Omega$. Here both the potential $q$
and the damping $a$ are allowed to be unbounded and/or singular. 

The main goal of this paper is to analyze the new phenomena which arise when the damping term $a$ is allowed to grow to infinity on an unbounded domain. To this end, we formally rewrite~\eqref{DWE.me} as a first order~system 
\begin{equation}\label{DWE.system}
\partial_t 
\begin{pmatrix}
u \\
v
\end{pmatrix}
=
\underbrace{
\begin{pmatrix}
0 & I \\
\Delta - q & - 2 a  
\end{pmatrix}}_{G}
\begin{pmatrix}
u \\
v
\end{pmatrix}
\end{equation}
and realize the operator $G$ in a suitable Hilbert space without assuming that the damping is dominated by $\Delta-q$. Our main results show that, even under these weak assumptions on the damping, $G$ generates a contraction semi-group, but $G$ may have essential spectrum 
that covers the entire semi-axis $(-\infty,0]$. As a consequence, 
although the energy of solutions will still approach zero, this decay will now be polynomial and
no longer exponential, \cf~\cite{Ikehata-2017} and the discussion
below. We further establish conditions for the latter and study 
the convergence of non-real eigenvalues in the asymptotic regime of diverging damping on a subdomain. 

In most of the literature on linearly damped wave equations on unbounded domains only bounded damping terms were considered. This is a natural condition to allow for the exponential decay of solutions, while large damping terms in fact tend to weaken the decay giving rise to the phenomenon known as over-damping. More precisely, increasing the damping term past a certain threshold will cause part of the spectrum to approach the imaginary axis, thus producing a slower decay. This phenomenon may already occur in finite-dimensional systems and in equations
like~\eqref{DWE.me} with bounded damping where its effect on individual eigenvalues is well-understood. Unbounded accretive or sectorial damping terms of equal strength as $-\Delta$ were considered as an application of semigroup generation results and of spectral estimates for second order abstract Cauchy problems in \cite{Jacob-2009-79,Jacob-2017} which allow to control the spectrum, in particular, near the imaginary axis.

To the best of our knowledge, the only article where the damping has been allowed to become unbounded at infinity is the recent preprint~\cite{Ikehata-2017}.
There the authors consider dampings on all of $\Rd \;(d\geq 3)$ and, using methods different from ours, they prove the existence and uniqueness of weak solutions whose energy decays at least with $(1+t)^{-2}$. In fact, our result on the essential spectrum will show that one of the characteristics of such systems is that the essential spectrum covers the whole semi-axis $(-\infty,0]$, thus excluding exponential energy decay.

To illustrate this issue, consider the simple model case given by the generators $G_n$, $n\in \N$, of the wave equation on the real line with the family of damping terms 
\begin{equation}\label{an.ex}
a_n(x)=x^{2n}+\fra_0, \quad x \in \Omega =\R, \quad n \in \N, \quad \fra_0 \geq 0,
\end{equation}
and a constant potential $q(x)=\frq_0 \geq 0$, $x \in \R$. The formal limit as $n$ goes to $\infty$ leads to a simple problem (in a different space) with
\begin{equation}\label{ainf.ex}
a_\infty(x)= \fra_0, \quad x \in \Omega_\infty=(-1,1),
\end{equation}
and Dirichlet boundary conditions at $\pm 1$. The spectrum of the generator $G_\infty$ is discrete and may be found explicitly.  It consists of eigenvalues with all, but possibly finitely many, lying on the line $-\fra_0 + \ii \R$, \cf~Remark~\ref{rem:x.inf}. Moreover, the energy of solutions of the corresponding wave equation is known to decay exponentially, \cf~for instance~\cite{Freitas-1996-132}. A natural question is to what extent the properties of $G_\infty$ are shared by $G_n$. The non-real eigenvalues of $G_n$, here given explicitly in terms of the eigenvalues of $-\dd^2/\dd x^2 + x^{2n} $
and located on rays of the form $\e^{\pm \ii \frac{n+1}{2n+1}\pi} \R_+$, \cf~Proposition~\ref{prop:x2n.ex}, do indeed converge to those of $G_\infty$.
However, while the spectrum of $G_\infty$ is discrete and does not contain $0$, all $G_n$, $n\in \N$, have non-empty essential
spectrum covering the entire negative semi-axis and thus $0$ is in the spectrum of $G_n$.
As a consequence, exponential decay of energy is lost, \cf~for instance~\cite[Thm.~10.1.7]{Davies-2007}.

The fundamental point here is that the essential spectrum can no longer be shifted away from $0$ by adding a positive potential $q(x)\geq \frq_0 >0$, as
might be done for bounded damping, to ensure that exponential energy decay still holds, \cf~for instance \cite{Zuazua-1991-70,Nakao-2001-238}. In fact,
even a potential $q$ that is unbounded at infinity, but does not dominate the damping term, will not be enough to cancel this effect.  On the other hand,
a dominating potential $q$ can be used to shift the essential spectrum from $0$, \cf~Remark~\ref{rem:q.dom}. 

As we will see, \eqref{an.ex} is not an isolated example and our results cover the much more general setting  with an open (typically unbounded) domain $\Omega \subset \Rd$,
a potential $q$ with low regularity and a damping
$a$ satisfying natural conditions allowing for a convenient separation property of the domain of the Schr\"odinger operator $-\Delta +q + \gamma a$
when $\gamma$ belongs to $\Cla$, \cf~Assumption~\ref{asm:a.r}, Remark~\ref{rem:sep}.i) and Theorem~\ref{thm:dom.H.gamma}.

We emphasize that the unbounded damping at infinity can by no means be viewed as ``small'' when compared to $-\Delta + q$ and our results, even those which
are qualitative, do not follow by standard perturbation techniques, traditionally used to handle bounded or relatively bounded damping terms.

The proofs rely on a wider range of methods like elliptic estimates for Schr\"odinger operators with unbounded complex potentials, quadratic complements (quadratic operator functions associated with \eqref{DWE.system}),  Fredholm theory, the use of suitable notions of essential spectra for non-self-adjoint operators, WKB expansions, convergence of sectorial forms acting in different spaces with $L^1_{\rm loc}$-coefficients, spectral convergence of holomorphic operator families, and properties of solutions of second order ODE's with polynomial potentials.

The crucial part of our analysis is the relation between the spectrum, and some of its subsets, of the generator $G$ and the associated quadratic operator function $T$ given by
\begin{equation}\label{T.act.def}
T(\la) = -\Delta + q + 2\la a + \la^2, \quad \la \in \Cla.
\end{equation}
While for bounded damping $T(\la)$ is defined for all $\la \in \C$ and the equivalence of $\la \in \sigma(G)$ and $0\in \sigma(T(\la))$ is relatively straightforward, \cf~for instance~\cite[Sec.~2.2, 2.3]{Tretter-2008} for abstract results, 
the unboundedness of $a$ is a major challenge that requires a new approach; in particular, first $T(\la)$ has to be introduced as a closed operator with non-empty resolvent set acting in $\LOm$.

It is the precise description of $\Dom(T(\la))$, \cf~Theorem~\ref{thm:dom.H.gamma}, that enables us to prove both the generation of a contraction semigroup,
\cf~Theorem~\ref{thm:m-ac}, and the spectral correspondence between $G$ and $T(\la)$ for the restricted range $\la \in \Cla$, \cf~Theorem~\ref{thm:GT}. Clearly,
there are crucial differences between $-\Delta + 2\la a$ for $\la >0$ and $\la<0$ since the quadratic form of the latter is not semi-bounded. Nevertheless,
for a general $\la \in \C$, convenient properties of $T(\lambda)$ with $\lambda >0$ remain valid also for $\la \in \Cla$ since the possibly negative
real part of $2 \la a$ is compensated by the imaginary part of $2 \la a$, \cf~Section~\ref{sec:T.def} for details. 

When $a$ is unbounded at infinity, 
we show that the set $\{\la\in \Cla:0 \in \sigma(T(\la))\}$, and hence the non-real spectrum of $G$, consists only of discrete eigenvalues of finite multiplicity which may only accumulate at the
semi-axis $(-\infty,0]$. Since the unboundedness of $a$ is not required for the equivalences in Theorem~\ref{thm:GT}, also the non-real essential spectrum of $G$ can be analyzed by studying whether $0$ belongs to the essential spectrum of $T(\la)$.

Because $T(\la)$ is not defined for $\la\in (-\infty,0)$, the negative real spectrum of $G$ is investigated directly for unbounded domains $\Omega$. We show that if $a$ grows to infinity in a channel in $\Omega$ whose radius may shrink at $\infty$ at a rate controlled by the growth of $a$, then $0$ belongs to the essential
spectrum of $G$. In fact, the whole real negative semi-axis belongs to the essential spectrum of $G$ even when $q$ is unbounded but does not
dominate $a$, \cf~Theorem~\ref{thm:Gspe}.

In Section~\ref{sec:lim.an}, motivated by examples \eqref{an.ex}, \eqref{ainf.ex} above, we prove a convergence result for non-real eigenvalues and corresponding eigenfunctions of a sequence of quadratic functions $\{T_n(\la)\}_n$, $\la \in \Cla$, with dampings possibly diverging on a subset of $\Omega$, \cf~Theorem~\ref{thm:sp.conv}. We thus justify the formal limit considered in the examples above.

In Section~\ref{sec:ex} we analyze two examples, the first is on the whole real line based on~\eqref{an.ex} and~\eqref{ainf.ex}, while the second is on a horizontal strip in $\R^2$ with damping $a(x,y) = x^2 + \fra_0$ and the corresponding discrete spectrum displaying the structure of a two--dimensional problem. Apart from showing what type of behavior one may now expect from isolated eigenvalues, more importantly both cases illustrate that having the discrete spectrum to the left of a line $\Re{\lambda}=-\alpha_{0}<0$ is, by itself, not enough to determine the type of decay of solutions in the presence of unbounded damping. Indeed, our results applied to both examples show that the essential spectrum covers the negative part of the real axis all the way up to $0$, thus excluding the possibility of uniform exponential decay of solutions in general.

\subsection{Notation}
\label{sec:not}

The following notations and conventions are used throughout the paper. The norm and inner product (linear in the first entry) in $\LOm$ are denoted by 
$\|\cdot\|:=\|\cdot\|_\LOm$ and $\langle \cdot, \cdot \rangle := \langle \cdot, \cdot \rangle_\LOm$, respectively. The domain of a  multiplication operator by a measurable function $m$ (here $a$ and $q$) in $L^2(\Omega)$ is always taken to be maximal, \ie~ 
\begin{equation}
\Dom(m):= \{ \psi \in \LOm \, : \, m \psi \in \LOm \}.
\end{equation}
The Dirichlet Laplacian on $\Omega$, introduced through the corresponding form, is denoted by $\DD$, \ie~
\begin{equation}
\begin{aligned}
- \DD \psi = -\Delta \psi,\quad \Dom(\DD) = \{ \psi \in \WotOmD \; : \: \Delta \psi \in \LOm\}.
\end{aligned}
\end{equation}
When $-\Delta+q$ is viewed as an operator, the Dirichlet realization introduced through the form is meant, \ie~
\begin{equation}\label{Dom.Dq.def}
\Dom(-\Delta +q):= \{\phi \in \WotOmD \cap \Dom(q^\frac12) \, : \, (-\Delta +q)\phi \in \LOm \}. 
\end{equation}
For $\Omega_1 \subset \Omega$ we view $L^2(\Omega_1)$ as a subspace of $\LOm$, $\LOm = L^2(\Omega_1) \oplus L^2(\Omega \setminus \ov \Omega_1)$,
\ie~we use zero extensions. On the other hand, for $f \in \LOm$, $\|f\|_{L^2(\Omega_1)}$ means $\|f \restriction \Omega_1 \|_{L^2(\Omega_1)}$.
For consistency with earlier work, we denote the numerical range of a linear operator $A$ acting in a Hilbert space $\cH$ by
\begin{equation}
\Num(A):= \{ \langle A f, f \rangle_\cH \, : \, f \in \Dom(A), \ \|f\|_\cH =1 \},
\end{equation}
while $W(A)$ may be more common in the operator theoretic literature; the numerical range of a quadratic form is introduced analogously, \cf~\cite[Sec.~VI]{Kato-1966}.

The essential spectrum of a non-self-adjoint operator may be defined in several, different and in general not equivalent, ways. Here we
use the definition via Weyl singular sequences, denoted by $\se{2}(\cdot)$ in \cite[Sec.~IX]{EE},
\begin{equation}
\se{2}(A) = \{ \la \in \C: \ \exists \{\psi_n\} \subset \Dom(A), \ \|\psi_n\|=1, \ \psi_n \wto 0, (A-\la) \psi_n \to 0, \ n \to \infty\}.
\end{equation}

\section{Generation of a contraction semigroup}
\label{sec:G.def}
Throughout the paper, if not stated otherwise, we shall assume that the damping and the potential satisfy the following regularity conditions.
\begin{asm}[\emph{Regularity assumptions on the damping $a$ and the potential $q$}]
\label{asm:a.r}
Let $a \in \LtlocOmR$, $q \in \LolocOmR$ with $a,q \geq 0$. Suppose that $a$ can be decomposed into a regular and singular part as
\begin{equation}
a=\ar + \as, \quad \ar \geq 0, 
\end{equation}
with $\ar \in W^{1,\infty}_{\rm loc}(\ov \Omega;\R)$, $\as \in \LtlocOmR$ and, for every $\eps>0$, there exists a constant $M_\nabla=M_\nabla(\eps) > 0$ such that
\begin{equation}\label{asm:a.reg}
|\nabla \ar| \leq \eps \ar^{\frac 32} + M_\nabla(q^\frac12 +1).
\end{equation}
Further assume that, for every $\eps >0$, there exists a constant $\Ms=\Ms(\eps)>0$ such that, for all $\psi \in \Dom(\ar) \cap \Dom(-\Delta +q)$,
\cf~\eqref{Dom.Dq.def},
\begin{equation}\label{asm:a2}
\begin{aligned}
\|\as \psi\| \leq \eps (\|(-\Delta+q) \psi\|+\|\ar \psi\|) + \Ms \|\psi\|. 
\end{aligned}
\end{equation}
\end{asm}
\begin{remark}
\label{rem:sep}
The exponent $\frac 32$ in~\eqref{asm:a.reg} is known to be optimal for the so-called separation property of the domain of
$-\Delta + a$, \cf~for instance~\cite{Everitt-1978-79}, which will be proved (and used) here as well, \cf~Theorem~\ref{thm:dom.H.gamma}.
\end{remark}
In some cases, we will assume, in addition, that $a$ is unbounded at infinity which results in special spectral features like
in Proposition~\ref{prop:sp.T} or Theorem~\ref{thm:GT}. 
\begin{asm}[\emph{Unboundedness of damping $a$ at infinity}]
\label{asm:a.gr} 
Let	$a$ satisfy 
\begin{equation}\label{asm:a.inf}
\lim_{k \to \infty} \  \essinf_{x \in \Omega, |x|>k} \ a(x) = \infty.
\end{equation}
\end{asm}

We are mostly interested in the situation when $a$ is not dominated by $q$, and a typical potential $q$ being bounded (or even $0$). The case where $q$ dominates $a$ is discussed in Remark~\ref{rem:q.dom}. 

In order to find a suitable operator realization of the formal operator matrix $G$, \cf~\eqref{DWE.system}, 
we denote by $\mathcal W(\Omega)$ the completion of the
pre-Hilbert space 
\begin{equation}
\left(\CcOm,
\langle \nabla \cdot, \nabla \cdot \rangle +  \langle q^\frac 12 \cdot,
q^\frac 12 \cdot \rangle
\right)
\end{equation}
the inner product of which is non-degenerate since $\nabla$ is injective on $\CcOm$,
and we define the product Hilbert space 
\begin{equation}\label{Hilb.space}
\begin{aligned}
\cH &:= 
\mathcal W(\Omega) \times \LOm,
\\
\langle (\phi_1, \phi_2), (\psi_1, \psi_2) \rangle_\H &:= \langle \nabla \phi_1, \nabla \psi_1 \rangle +  \langle q^\frac 12 \phi_1,
q^\frac 12 \psi_1 \rangle + \langle \phi_2, \psi_2 \rangle.
\end{aligned}
\end{equation}
Here $\Dom((-\Delta+q)^{1/2})=\WotOmD \cap \Dom(q^{1/2}) \subset \mathcal W(\Omega)$ and equality holds if, for example, there is a positive constant $\frq_0$ such that $q \geq \frq_0>0$, 
\cf~\cite[Thm.~1.8.1]{Davies-1989}, or if $\Omega$ has finite width and so Poincar\'e's inequality applies, \cf~for instance~\cite[Thm.~6.30]{Adams-2003}; 
then $-\Delta+q$ is uniformly positive and the space $\cH$ in \eqref{Hilb.space} coincides with the usual choice of space for abstract operator matrices associated with quadratic operator functions 
in this case, \cf~for instance \cite{Langer-2006-267}, \cite{Jacob-2017}.

Moreover, by the first representation theorem~\cite[Thm.~VI.2.1]{Kato-1966}, $\Dom(-\Delta+q)$ and also its core $\cD$ given by the restriction to functions with compact support, \cf~\eqref{core.def}, are dense in $\mathcal W(\Omega)$.

In $\cH$ we introduce the densely defined operator
\begin{equation}\label{G0.def}
G_0:=
\begin{pmatrix}
0 & I \\
\Delta - q & - 2 a  
\end{pmatrix},
\qquad 
\Dom(G_0):= \left(\Dom(-\Delta+q) \cap \Dom(a) \right)^2.
\end{equation}

The following theorem states the fundamental property that 
\begin{equation}\label{G.def}
G:=\ov{G_0}
\end{equation}
generates a contraction semigroup; the proof is given at the end of Section~\ref{sec:T.def} after all necessary ingredients have been derived.
\begin{theorem}\label{thm:m-ac}
	Let $a$, $q$ satisfy Assumption~\ref{asm:a.r} and let $G_0$ be as in \eqref{G0.def}. Then $-G_0$ is accretive and $\Ran(G_0 - 1)$
	is dense in $\H$. Hence $-G_0$ is closable with m-accretive closure $-G=-\ov{G_0}$ and $G$ generates a contraction semigroup in $\H$.
\end{theorem}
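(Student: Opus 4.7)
The plan is to invoke the Lumer--Phillips generation theorem: once I check that $-G_0$ is densely defined, accretive, and that $\Ran(G_0-1)$ is dense in $\cH$, standard theory yields that $-G_0$ is closable, its closure $-G$ is m-accretive, and $G$ generates a $C_0$-semigroup of contractions. Density of $\Dom(G_0)$ is immediate because $\CcOm\subset \Dom(-\Delta+q)\cap\Dom(a)$ (using $a\in\LtlocOm$ and $q\in\LolocOm$) and $\CcOm$ is dense in $\mathcal{W}(\Omega)$, hence the squared space is dense in $\cH$.

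For the accretivity, I would take $(\phi_1,\phi_2)\in\Dom(G_0)$ and expand $\langle G_0(\phi_1,\phi_2),(\phi_1,\phi_2)\rangle_\cH$ directly from~\eqref{Hilb.space} and~\eqref{G0.def}. Since $\phi_1\in\Dom(-\Delta+q)\subset \WotOmD\cap\Dom(q^{1/2})$ and $\phi_2\in\WotOmD\cap\Dom(q^{1/2})$, the form identity $\langle(\Delta-q)\phi_1,\phi_2\rangle=-\langle\nabla\phi_1,\nabla\phi_2\rangle-\langle q^{1/2}\phi_1,q^{1/2}\phi_2\rangle$ applies, and the gradient and potential cross terms combine to the purely imaginary quantity $2\ii\Im\bigl(\langle\nabla\phi_2,\nabla\phi_1\rangle+\langle q^{1/2}\phi_2,q^{1/2}\phi_1\rangle\bigr)$. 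Taking real parts gives
\[
\Re\langle G_0(\phi_1,\phi_2),(\phi_1,\phi_2)\rangle_\cH=-2\|a^{1/2}\phi_2\|^2\leq 0,
\]
so $-G_0$ is accretive.

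To establish density of $\Ran(G_0-1)$, I would reduce the system $(G_0-1)(\phi_1,\phi_2)=(f_1,f_2)$ to a scalar equation. The first component forces $\phi_2=\phi_1+f_1$, and inserting this into the second yields $T(1)\phi_1=-f_2-(2a+1)f_1$ with $T(\la)$ as in~\eqref{T.act.def}. I would let $(f_1,f_2)$ range over the dense subset $\CcOm\times\LOm$ of $\cH$; for such $f_1$ the right-hand side lies in $\LOm$ because $a\in\LtlocOm$ and $f_1$ has compact support. By Theorem~\ref{thm:dom.H.gamma}, $T(1)$ is closed in $\LOm$ with $\Dom(T(1))=\Dom(-\Delta+q)\cap\Dom(a)$; since $a,q\geq 0$, the lower bound $T(1)\geq I$ yields $0\in\rho(T(1))$, so a unique $\phi_1\in\Dom(-\Delta+q)\cap\Dom(a)$ solving the scalar equation exists. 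Then $\phi_2=\phi_1+f_1$ lies in the same space, so $(\phi_1,\phi_2)\in\Dom(G_0)$ and $(G_0-1)(\phi_1,\phi_2)=(f_1,f_2)$; thus $\Ran(G_0-1)$ contains the dense set $\CcOm\times\LOm$.

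The only substantive input is Theorem~\ref{thm:dom.H.gamma}, and specifically its separation assertion $\Dom(T(1))=\Dom(-\Delta+q)\cap\Dom(a)$: without it, the solution $\phi_1$ would only be recovered in the form domain of $T(1)$, and the reconstructed pair would miss the operator domain $\Dom(G_0)$ needed to run the range argument. Once the separation property and the resulting bijectivity of $T(1)$ on $\LOm$ are available, the accretivity computation above plus the standard closability result for accretive operators with dense range of $(-G_0+1)$ and Lumer--Phillips produce the contraction semigroup.
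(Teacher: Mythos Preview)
Your overall strategy matches the paper's proof: compute accretivity directly, then solve $(G_0-1)\Phi=\Psi$ for $\Psi$ in a dense set by inverting $T(1)$, invoking the separation property from Theorem~\ref{thm:dom.H.gamma}. The accretivity computation and the use of $T(1)^{-1}$ are exactly as in the paper.

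There is, however, a technical gap in your choice of dense set. You claim $\CcOm\subset\Dom(-\Delta+q)$, but Assumption~\ref{asm:a.r} only gives $q\in\LolocOm$, not $q\in\LtlocOm$. For $f_1\in\CcOm$ one has $-\Delta f_1\in\LOm$, but $qf_1$ need not lie in $\LOm$, so in general $f_1\notin\Dom(-\Delta+q)$. This breaks two steps: your argument that $\Dom(G_0)$ is dense, and---more importantly---your verification that $\phi_2=\phi_1+f_1\in\Dom(-\Delta+q)\cap\Dom(a)$, without which $(\phi_1,\phi_2)\notin\Dom(G_0)$ and the range argument fails.

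The paper avoids this by taking the first component from the core $\cD=\{\psi\in\Dom(-\Delta+q):\supp\psi\text{ compact}\}$ of \eqref{core.def} rather than $\CcOm$; then $f_1\in\Dom(-\Delta+q)$ by definition, $f_1\in\Dom(a)$ follows from $\ar\in L^\infty_{\loc}(\ov\Omega)$ together with \eqref{asm:a2}, and $\cD$ is still dense in $\mathcal W(\Omega)$. With this single substitution (use $\cD\times\CcOm$ or $\cD\times\LOm$ instead of $\CcOm\times\LOm$) your proof goes through and coincides with the paper's.
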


\subsection{The associated quadratic operator function}
\label{sec:T.def}

Employing sectorial forms, we introduce the family $T(\lambda)$, $\la \in \Cla$, \cf~\eqref{T.act.def}, of closed operators in $\LOm$. Although the operator function $T$ resembles one of the quadratic complements of $G$, \cf~\cite[Sec.~2.2]{Tretter-2008}, however,
here $T(\la)$ is considered as an operator from $\LOm$ to $\LOm$ and not from $\mathcal W(\Omega)$ to $\LOm$. 

We shall introduce $T(\lambda)$ as
\begin{equation}\label{T.def}
T(\lambda):= H_{2\lambda} + \lambda^2, 
\quad 
\Dom(T(\lambda)) 
:= 
\Dom(H_{2\lambda}),  \quad \la \in \Cla,
\end{equation}
via the one-parameter family of operators
\begin{equation}\label{H.gam.def}
H_{\gamma} = -\Delta + q + \gamma a, \quad \gamma \in \ClaO,
\end{equation}
which will be defined rigorously below, using the first representation theorem for a {\it rotated} version of $H_{\gamma}$. In fact, the numerical range of $H_{\gamma}$ 
is contained in a sector with semi-angle smaller then $\pi/2$ which need not lie in the right half-plane; however, after multiplication of $H_{\gamma}$ by~$\e^{-\ii \arg(\gamma) /2}$, we obtain a sectorial operator $\widetilde H_\gamma=\e^{-\ii \arg(\gamma) /2} H_{\gamma}$. We mention that the operator family $\widetilde H_\gamma$, $\gamma \in \Cla$, is not uniformly sectorial, \cf~\eqref{h.gam.nr} below.

Note that here we have included $\gamma=0$ on purpose, although the domains of~$H_\gamma$ for $\gamma\ne 0$ and for $\gamma=0$ are very different. Clearly, for $\gamma = 0$, no rotation is needed since $H_0$ is self-adjoint and bounded from below. For convenience, we set $\arg(0)=0$ in what follows.

In the definition of $H_\gamma$ as well as in several auxiliary results, it suffices to assume less regularity of $a$ than in Assumption~\ref{asm:a.r}. 

\begin{lemma}\label{lem:h.def}
	Let $a,q \in \LolocOmR$ and $a,q \geq 0$. Then the following hold.
	\begin{enumerate}[{\upshape i)}]
		\item For fixed $\gamma \in \ClaO$, the form 
		\begin{equation}
		\begin{aligned}
		\widetilde{h}_{\gamma}  
		&:= \e^{-\frac \ii 2  \arg(\gamma) }  (\| \nabla \cdot \|^2 + \| q^\frac 12 \cdot \|^2)
		+
		\e^{\frac \ii 2  \arg(\gamma) } \,  |\gamma| 
		\|a^\frac 12 \cdot\|^2,
		\\
		\Dom(\widetilde{h}_{\gamma}) 
		&:= 
		\WotOmD \cap \Dom(\gamma a^\frac 12) \cap \Dom(q^\frac 12), 
		\end{aligned}
		\end{equation}
		is closed in $\LOm$ and sectorial with 
		\begin{equation}\label{h.gam.nr}
		\Num (\widetilde h_\gamma) \subset \left\{  z \in \C \, : \, |\arg z| \leq \frac{\arg(\gamma)}2 \right\}.
		\end{equation} 
		\item \label{H.core}
		$\CcOm$ is a core of $\,\widetilde h_\gamma$ and $\,\widetilde h_\gamma$ determines a unique m-sectorial operator~$\widetilde H_\gamma$ in $\LOm$. 
		\item \label{H.comp.res}
		If Assumption \ref{asm:a.gr} holds, then $\widetilde H_{\gamma}$, $\gamma \in \Cla$, has compact resolvent.
		\item \label{H.hol.it}
		The operator family
		\begin{equation}\label{H.gam.def.2}
		H_{\gamma} := \e^{\frac \ii 2  \arg(\gamma) } \widetilde{H}_{\gamma}, \quad \gamma \in \Cla,
		\end{equation}
		is a holomorphic family of closed operators.
	\end{enumerate}
\end{lemma}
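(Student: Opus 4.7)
The plan is to treat the four assertions in sequence, with (ii) being the main delicate step; parts (i), (iii), (iv) rest on standard sectorial form theory, Rellich--Kondrachov compactness, and Kato's holomorphic-family theory. For (i), split $\widetilde h_\gamma = \e^{-\frac{\ii}{2}\arg(\gamma)}\, s + \e^{\frac{\ii}{2}\arg(\gamma)}|\gamma|\, m$ with $s[\psi]:=\|\nabla\psi\|^2+\|q^{1/2}\psi\|^2$ on $\WotOmD\cap\Dom(q^{1/2})$ and $m[\psi]:=\|a^{1/2}\psi\|^2$ on $\Dom(a^{1/2})$. Both $s$ and $m$ are non-negative, densely defined, closed forms, so $\Re\widetilde h_\gamma=\cos(\arg(\gamma)/2)(s+|\gamma|m)$ is closed on $\Dom(\widetilde h_\gamma)=\WotOmD\cap\Dom(q^{1/2})\cap\Dom(a^{1/2})$ as a finite sum of closed non-negative forms, while the identity $\Im\widetilde h_\gamma=\sin(\arg(\gamma)/2)(-s+|\gamma|m)$ yields $|\Im\widetilde h_\gamma|\le|\tan(\arg(\gamma)/2)|\,\Re\widetilde h_\gamma$, giving sectoriality and the sector estimate \eqref{h.gam.nr}; closedness of $\widetilde h_\gamma$ then follows since its imaginary part is controlled by its real part.

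For (ii), approximate $\psi\in\Dom(\widetilde h_\gamma)$ in the form-graph norm $(\Re\widetilde h_\gamma[\cdot]+\|\cdot\|^2)^{1/2}$ in two steps. First truncate by smooth cut-offs $\chi_k\in C_c^\infty(\R^d)$ with $\chi_k\equiv 1$ on $B_k$ and uniformly bounded gradients: weighted $L^2$-convergence $\|q^{1/2}(1-\chi_k)\psi\|+\|a^{1/2}(1-\chi_k)\psi\|\to 0$ follows from dominated convergence using $q^{1/2}\psi, a^{1/2}\psi\in\LOm$, while gradient convergence is standard via the product rule. Second, apply Friedrichs mollification inside $\Omega$ at a scale smaller than the distance from $\supp(\chi_k\psi)$ to $\partial\Omega$ (made legitimate by $\psi\in\WotOmD$): gradient convergence is classical, and weighted $L^2$-convergence uses the local $L^1$-bounds on $a,q$ over a small thickening of the support. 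Kato's first representation theorem then produces the unique m-sectorial operator $\widetilde H_\gamma$.

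For (iii), under Assumption~\ref{asm:a.gr}, verify compactness of $(\Dom(\widetilde h_\gamma),\|\cdot\|_{\widetilde h_\gamma})\hookrightarrow\LOm$: for a bounded sequence $\{\psi_n\}$, $\sup_n\|a^{1/2}\psi_n\|<\infty$ combined with $\essinf_{|x|>k}a(x)\to\infty$ forces the uniform tightness $\sup_n\|\psi_n\|_{L^2(\Omega\cap\{|x|>k\})}\to 0$, while the uniform $W^{1,2}$-bound with Rellich--Kondrachov gives local $L^2$-precompactness; a diagonal extraction then produces a subsequence converging in $\LOm$. Compact form embedding implies compact resolvent of $\widetilde H_\gamma$. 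For (iv), for $\gamma\in\Cla$ the domain $\Dom(\widetilde h_\gamma)=\WotOmD\cap\Dom(q^{1/2})\cap\Dom(a^{1/2})$ is $\gamma$-independent, and the un-rotated form $h_\gamma[\psi]:=\|\nabla\psi\|^2+\|q^{1/2}\psi\|^2+\gamma\|a^{1/2}\psi\|^2$ is sectorial for every such $\gamma$ and depends linearly, hence holomorphically, on $\gamma$ on this fixed domain; Kato's theorem on holomorphic families of type (B) then identifies $\{H_\gamma\}_{\gamma\in\Cla}$ as a holomorphic family of closed operators.

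The only genuinely delicate step is the weighted-core argument in (ii): under only $L^1_{\loc}$-regularity of $a$ and $q$, the cut-off-plus-mollification procedure must be shown to converge not only in $W^{1,2}$ but also in the $a^{1/2}$- and $q^{1/2}$-weighted $L^2$-norms, which is where the Dirichlet condition $\psi\in\WotOmD$ and the local integrability of the weights are used in tandem; the other items are routine once $\widetilde H_\gamma$ has been constructed.
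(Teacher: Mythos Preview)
Your treatment of (i) is correct and matches the paper's argument; your direct tightness-plus-Rellich proof of (iii) is a valid alternative to the paper's route via the real part $\Re\widetilde H_\gamma$ and zero-extension to $\Rd$.

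There is, however, a genuine error in (iv). The un-rotated form $h_\gamma[\psi]=\|\nabla\psi\|^2+\|q^{1/2}\psi\|^2+\gamma\|a^{1/2}\psi\|^2$ is \emph{not} sectorial when $\Re\gamma<0$: its real part $\|\nabla\psi\|^2+\|q^{1/2}\psi\|^2+(\Re\gamma)\|a^{1/2}\psi\|^2$ is unbounded below whenever $a$ is unbounded and not controlled by $-\Delta+q$ (take normalized $\psi$ translated to where $a$ is large while $\|\nabla\psi\|$ stays fixed). Hence Kato's type~(a)/(B) theory does not apply to $h_\gamma$ on all of $\Cla$; this is precisely why the rotation by $\e^{-\ii\arg(\gamma)/2}$ was introduced in the first place. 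The paper works \emph{locally}: near any $\gamma_0\in\Cla$ one rotates by the \emph{fixed} angle $\arg(\gamma_0)/2$, obtaining a family $\widehat h_\gamma$ that \emph{is} sectorial in a neighborhood of $\gamma_0$ and linear in $\gamma$ there (so type~(a)); one then checks, via \cite[Cor.~VI.2.4]{Kato-1966}, that the associated operator agrees with $H_\gamma$. Your global argument cannot be salvaged without this local-rotation device.

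There is also a gap in (ii). After spatial truncation, $\supp(\chi_k\psi)$ may meet $\partial\Omega$ even though $\psi\in\WotOmD$, so ``mollification at a scale smaller than the distance from $\supp(\chi_k\psi)$ to $\partial\Omega$'' is not available. Moreover, with only $a,q\in L^1_{\loc}$, mollifiers do not obviously converge in the $a^{1/2}$- and $q^{1/2}$-weighted $L^2$-norms; the standard remedy is an additional \emph{value}-truncation step reducing to bounded $\psi$, after which dominated convergence applies with majorant $4\|\psi\|_\infty^2(q+|\gamma|a)\,1_K\in L^1$ on a fixed compact $K$. The paper sidesteps this by citing \cite[Thm.~1.8.1]{Davies-1989} and \cite[Thm.~VI.1.21]{Kato-1966} directly.
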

\begin{proof}
	i) We denote $\omega:=\arg(\gamma)/2 \in (-\pi/2,\pi/2)$. Since
	\begin{equation}\label{reim.gam}
	\begin{aligned}
	\Re \widetilde{h}_{\gamma} [\psi] 
	&= \cos \omega 
	\left( \|\nabla \psi\|^2 + \|q^\frac 12 \psi \|^2
	+
	|\gamma| \| a^\frac 12 \psi \|^2
	\right),
	\\
	\Im \widetilde{h}_{\gamma} [\psi] 
	&= -\sin \omega 
	\left( \|\nabla \psi\|^2 + \|q^\frac 12 \psi \|^2
	-
	|\gamma| \| a^\frac 12 \psi \|^2
	\right),
	\end{aligned}
	\end{equation}
	the sectoriality of $\widetilde h_\gamma$ follows from
	\begin{equation}\label{tan.gam}
	\begin{aligned}
	\left| \Im \widetilde{h}_{\gamma} [\psi] \right|
	\leq 
	\left| \sin \omega \right| 
	(\|\nabla \psi\|^2 + \|q^\frac 12 \psi \|^2 + |\gamma| \| a^\frac 12 \psi \|^2)  
	\leq 
	\left|\tan \omega \right| \Re \widetilde{h}_{\gamma} [\psi]. 
	\end{aligned}
	\end{equation}
	The form $\widetilde{h}_{\gamma}$ is closed since $\Re \widetilde{h}_{\gamma}$ is closed, \cf~\cite[Thm.~1.8.1]{Davies-1989}. 
	The enclosure \eqref{h.gam.nr} of the numerical range of $\widetilde h_\gamma$ follows from \eqref{tan.gam}.
	
	ii)
	The core property of $\CcOm$ follows from \cite[Thm.~1.8.1]{Davies-1989} and \cite[Thm.~VI.1.21]{Kato-1966}. The operator $\widetilde H_\gamma$ is determined by the first representation theorem \cite[Thm.~VI.2.1]{Kato-1966}. 
	
	iii) The resolvent of $\widetilde{H}_{\gamma}$ is compact if and only if the resolvent of $\Re \widetilde{H}_{\gamma}$ is 
	compact, \cf~\cite[Thm.~VI.3.3.]{Kato-1966}.  The operator $\Re \widetilde{H}_{\gamma}$, induced by the form
	$\Re \widetilde{h}_{\gamma}$, is self-adjoint and has compact resolvent if $\Dom(\widetilde h_\gamma)$ is compactly embedded
	in $\LOm$, \cf~\cite[Thm.~XIII.67]{Reed4}. If $\Omega$ is bounded, then $W_0^{1,2}(\Omega)$, and hence $\Dom(\widetilde h_\gamma)$,
	is compactly embedded in $\LOm$ by the Rellich-Kondrachov Theorem, \cf~\cite[Thm.~6.3]{Adams-2003}.
	For unbounded $\Omega$, let $\phi \in \Dom(\widetilde h_\gamma)$. The zero extension $\widetilde \phi$ of $\phi$ belongs to
	$\WotRd$, \cf~\cite[Lem.~3.27]{Adams-2003}, and to $\Dom(a_{\rm ext}^{1/2})$ where
	\begin{equation}
	a_{\rm ext}(x):= 
	\begin{cases}
	\hspace{6mm} a(x), & x \in \Omega, 
	\\[1mm]
	\ds \essinf_{x\in \Omega, |x|>k} a(x), & x \notin \Omega, \ |x|=k.
	\end{cases}
	\end{equation}
	Moreover, there exists a non-negative constant $C$, independent of $\phi$, such that
	\begin{equation}
	\|\widetilde \phi\|_{W^{1,2}(\R^d)}^2 + \|a_{\rm ext}^\frac 12 \widetilde \phi\|_{L^2(\R^d)}^2 \leq C (\Re \widetilde h_\gamma [ \phi] + \| \phi\|_{\LOm}^2).
	\end{equation}
	The function $a_{\rm ext}$ satisfies Assumption \ref{asm:a.gr} on $\Rd$ and thus, by Rellich's criterion, \cf~\cite[Thm.~XIII.65]{Reed4},
	$\Dom(\widetilde h_\gamma)$ is compactly embedded in $\LOm$  also for unbounded $\Omega$.

	iv) We verify that $H_\gamma$ is holomorphic (in the sense of \cite[Sec.~VI.1.2]{Kato-1966}) in a neighborhood of any $\gamma_0 \in \C \setminus (-\infty,0]$. The strategy is to use the analyticity of the associated quadratic form. Nonetheless, we first note that, in a neighborhood of~$\gamma_0$, $H_\gamma$ is equal to the operator 
	\begin{equation}
	\e^{\frac{\ii}{2} \arg(\gamma_0) } \widehat H_{\gamma} :=
	\e^{\frac{\ii}{2} \arg(\gamma_0) } (\e^{-\frac{\ii}{2} \arg(\gamma_0)}(- \Delta +q) + \e^{ -\frac{\ii}{2} \arg(\gamma_0)} \gamma a) 
	\end{equation}
	where $\widehat H_{\gamma}$ is the m-sectorial operator introduced through the sectorial form 
	\begin{equation}
	\begin{aligned}
	\widehat{h}_{\gamma}  
	&:= \e^{-\frac{\ii}{2} \arg(\gamma_0)} (\| \nabla \cdot \|^2 + \|q^\frac 12 \cdot \|^2)
	+
	\e^{-\frac{\ii}{2} \arg(\gamma_0) } \gamma
	\|a^\frac 12 \cdot\|^2,
	\\
	\Dom(\widehat{h}_{\gamma}) 
	&:= 
	\WotOmD \cap \Dom(a^\frac 12) \cap \Dom(q^\frac 12) ;
	\end{aligned}
	\end{equation}
	the sectoriality and closedness of $\widehat h_\gamma$ can be verified as in the proof of i), \cf~\eqref{reim.gam}--\eqref{tan.gam}, 
	and the equality of the operators $H_\gamma$ and $\e^{\frac{\ii}{2} \arg(\gamma_0) } \widehat H_{\gamma}$ follows from~\cite[Cor.~VI.2.4]{Kato-1966}.
	Since the rotation in $\widehat{h}_{\gamma} $ is independent of $\gamma$, the form associated with $\widehat H_\gamma$ is obviously 	an analytic family of type (a) in a neighborhood of $\gamma_0$, \cf~\cite[Sec.~VII.4.2]{Kato-1966}. Thus $\widehat H_\gamma$, and hence
	$H_\gamma$, are holomorphic in a neighborhood of~$\gamma_0$.
\end{proof}

In case of higher regularity of $a$ as required in Assumption~\ref{asm:a.r}, we obtain the following separation property of $\Dom(T(\la))=\Dom(H_{2\la})$ which ensures that $T(\la)$ is defined as a sum of unbounded operators. The strategy of the proof is similar to~\cite{Krejcirik-2017-221}, but the different type of potentials used here requires new estimates.

\begin{theorem}\label{thm:dom.H.gamma}
	Let $a,q$ satisfy Assumption \ref{asm:a.r} and let $\Dom(-\Delta+q)$ be as in \eqref{Dom.Dq.def}. 
	Then $T(\la)$,~$\la \in \Cla$, is a holomorphic family of type $(A)$ with
	\begin{equation}\label{T.sep}
	\Dom(T(\la)) = \Dom(-\Delta+q) \cap \Dom(a), \quad \la \in \Cla,
	\end{equation}	
	the set
	\begin{equation}\label{core.def}
	\cD := \{ \psi \in \Dom(-\Delta+q) \, : \, \supp \psi \, \text{ is compact in } \Rd\} \subset \Dom(\ar)
	\end{equation}
	is a core of $T(\la)$, $\la \in \Cla$, and 
	\begin{equation}\label{T.Jsa}
	T(\la)^* = \cC T(\la) \cC = T(\ov \la), \quad \la \in \Cla,
	\end{equation}
	where $\cC$ is the $($antilinear$)$ operator of complex conjugation in $\LOm$. 
\end{theorem}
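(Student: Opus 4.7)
The plan is to reduce every claim to a single a priori estimate on $\cD$: for every $\la\in\Cla$ there exists $C_\la>0$ with
\begin{equation}
\|(-\Delta+q)\psi\| + \|a\psi\| \le C_\la\bigl(\|T(\la)\psi\|+\|\psi\|\bigr),\qquad \psi\in\cD.
\end{equation}
First one verifies $\cD\subset\Dom(T(\la))$ with $T(\la)\psi=(-\Delta+q)\psi+2\la a\psi+\la^2\psi$ on $\cD$: $\ar\psi\in\LOm$ is automatic from $\ar\in W^{1,\infty}_{\rm loc}$ and compact support of $\psi$, then $\as\psi\in\LOm$ by the relative bound \eqref{asm:a2}, and a direct form computation via $\widetilde h_{2\la}$ (Lemma~\ref{lem:h.def}) together with the first representation theorem identifies the action of $H_{2\la}$ on $\cD$ with $(-\Delta+q)+2\la a$. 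Granted the estimate, a standard cutoff approximation $\chi_n\psi\to\psi$ for $\psi\in\Dom(T(\la))$, with commutator terms $2\nabla\chi_n\cdot\nabla\psi+(\Delta\chi_n)\psi$ vanishing as $n\to\infty$, simultaneously shows that $\cD$ is a core and that $(-\Delta+q)\psi$ and $a\psi$ belong separately to $\LOm$, yielding \eqref{T.sep}; the reverse inclusion $\Dom(-\Delta+q)\cap\Dom(a)\subset\Dom(T(\la))$ is immediate from the same form computation. Holomorphy of type (A) then follows from the $\la$-independent domain \eqref{T.sep} and the polynomial dependence $T(\la)=(-\Delta+q)+2\la a+\la^2$, and \eqref{T.Jsa} is a direct computation: since $-\Delta+q$ and $a$ are self-adjoint with real coefficients, $\cC T(\la)\cC = T(\ov\la)$ and $(H_{2\la})^*=H_{2\ov\la}$ (by conjugating the form $\widetilde h_{2\la}$), giving $T(\la)^*=T(\ov\la)$ on the common domain.

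To establish the estimate I expand
\begin{equation}
\|H_{2\la}\psi\|^2 = \|(-\Delta+q)\psi\|^2+4|\la|^2\|a\psi\|^2+4\Re\bigl(\ov\la\,\langle(-\Delta+q)\psi,a\psi\rangle\bigr)
\end{equation}
and integrate by parts in the cross term, which is legal on $\cD$ thanks to compact support of $\psi$ and $\ar\in W^{1,\infty}_{\rm loc}$:
\begin{equation}
\langle(-\Delta+q)\psi,\ar\psi\rangle = \int_{\Omega}\ar|\nabla\psi|^2+\int_{\Omega} q\,\ar|\psi|^2+\int_{\Omega}\ov\psi\,\nabla\psi\cdot\nabla\ar.
\end{equation}
The last integral is controlled through the pointwise bound \eqref{asm:a.reg}: its $\eps\,\ar^{3/2}$ part is absorbed by weighted Cauchy--Schwarz into $\int\ar|\nabla\psi|^2+\|\ar\psi\|^2$, and the $M_\nabla(q^{1/2}+1)$ part, via $\|\nabla\psi\|^2+\|q^{1/2}\psi\|^2\le\|(-\Delta+q)\psi\|\,\|\psi\|$, into $\|(-\Delta+q)\psi\|^2+\|\psi\|^2$, each with prefactors proportional to $\eps$. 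The singular contribution $\langle(-\Delta+q)\psi,\as\psi\rangle$ is estimated by Cauchy--Schwarz together with \eqref{asm:a2}.

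The hard part is the sign of $4\Re\ov\la$ multiplying the positive combination $\int\ar|\nabla\psi|^2+\int q\,\ar|\psi|^2$, which is adverse whenever $\Re\la<0$. To cope I exploit the rotated m-sectorial operator $\widetilde H_{2\la}=\e^{-\ii\arg(\la)/2}H_{2\la}$ from Lemma~\ref{lem:h.def}: identity \eqref{reim.gam} yields
\begin{equation}
\cos\!\bigl(\tfrac{\arg(\la)}{2}\bigr)\bigl(\|\nabla\psi\|^2+\|q^{1/2}\psi\|^2+2|\la|\,\|a^{1/2}\psi\|^2\bigr)\le\Re\langle\widetilde H_{2\la}\psi,\psi\rangle\le\|H_{2\la}\psi\|\,\|\psi\|,
\end{equation}
which controls $\|a^{1/2}\psi\|^2$, $\|\nabla\psi\|^2$ and $\|q^{1/2}\psi\|^2$ through $\|H_{2\la}\psi\|\,\|\psi\|$ with a constant depending on $\arg(\la)$. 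The adverse positive combination is thereby reabsorbed after a small-$\eps$ rearrangement, delivering the a priori estimate; the cutoff argument of the first paragraph then finishes the proof.
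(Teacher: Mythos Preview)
Your overall architecture---reduce to an a priori estimate on $\cD$, then run a cutoff approximation---matches the paper's, and the integration-by-parts identity for the cross term is the right opening. The genuine gap is in your last paragraph. The sectorial form bound from Lemma~\ref{lem:h.def} controls only the \emph{unweighted} quantities $\|\nabla\psi\|^2$, $\|q^{1/2}\psi\|^2$, $\|a^{1/2}\psi\|^2$; it gives no handle on the $\ar$-\emph{weighted} combination $\int_\Omega \ar|\nabla\psi|^2+\int_\Omega q\,\ar|\psi|^2$ that carries the adverse sign $4\Re\la$. Since $\ar$ is unbounded, no ``small-$\eps$ rearrangement'' can absorb an $\ar$-weighted gradient into a plain gradient, and the estimate does not close.

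The paper repairs this in two steps that use $\Im\la\neq 0$ essentially (automatic for $\la\in\Cla$ with $\Re\la\le 0$). Working first with the regular part $H_\gamma^{\rm r}:=-\Delta+q+\gamma\ar$, $\gamma=2\la$, the paper does \emph{not} keep the adverse positive combination but reassembles it back into $2(\Re\gamma)\langle(-\Delta+q)\psi,\ar\psi\rangle$ at the cost of a commutator error bounded by $4|\gamma|\langle|\psi|\,|\nabla\ar|,|\nabla\psi|\rangle$. The reassembled term is then Young-ed into $\|(-\Delta+q)\psi\|^2$ and $\|\ar\psi\|^2$; the surviving coefficient of $\|\ar\psi\|^2$ is $(\Im\gamma)^2-\eps_1(\Re\gamma)^2>0$ for small $\eps_1$, which is where $\Im\gamma\neq 0$ enters. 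The commutator error, estimated via \eqref{asm:a.reg}, still produces a term $-2\eps_2|\gamma|\,\|\ar^{1/2}\nabla\psi\|^2$, but now with \emph{small} coefficient. That remaining weighted gradient is controlled by a separate \emph{graph-norm} identity: taking the imaginary part of $\langle H_\gamma^{\rm r}\psi,(-\Delta+q)\psi\rangle$ yields $|\Im\gamma|\,\|\ar^{1/2}\nabla\psi\|^2$ up to lower order, and the left side is bounded by $\eps_4^{-1}\|H_\gamma^{\rm r}\psi\|^2+\eps_4\|(-\Delta+q)\psi\|^2$. This second-order identity---not the first-order sectorial form bound---is the missing ingredient in your argument. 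Only afterwards is $\gamma\as$ added as a relatively bounded perturbation of $H_\gamma^{\rm r}$ with bound $0$.
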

\begin{proof}
	By \eqref{T.def}, it suffices to analyze $H_\gamma$ with $ \gamma =2\la \in \Cla$.	
	It follows from the first representation theorem, \cf~\cite[Thm.~VI.2.1]{Kato-1966}, that 
	\begin{equation}
	\Dom(H_\gamma) 
	\subset 
	\{
	\psi \in \Dom(\widetilde h_\gamma) \, : \, (-\Delta + q + \gamma a )\psi \in \LOm
	\}.
	\end{equation}
	Similarly, for $H_\gamma^{\rm r}:=-\Delta + q + \gamma \ar$, introduced in the same way as $H_\gamma$ through the form $\widetilde h_\gamma^{\rm r}$, we have
	\begin{equation}
	\Dom(H_\gamma^{\rm r}) 
	\subset 
	\{
	\psi \in \Dom(\widetilde h_\gamma^{\rm r}) \, : \, (-\Delta + q + \gamma \ar )\psi \in \LOm
	\}.
	\end{equation}
	Below we prove that $\cD$ is a core of $H_\gamma^{\rm r}$ and that there exist positive constants $k_1$ and $k_2$ such that,
	for all $\psi \in \cD$,
	\begin{equation}\label{norm.eq.1}
	\begin{aligned}
	&k_1 \left(
	\|(-\Delta +q)\psi \|^2 +  \| \ar \psi\|^2 + \|\psi\|^2
	\right)
	\\
	&
	\leq 
	\|H_\gamma^{\rm r} \psi \|^2 + \|\psi\|^2 
	\leq 
	k_2 \left(
	\|(-\Delta +q)\psi \|^2 +  \|\ar \psi\|^2 + \|\psi\|^2
	\right),
	\end{aligned}
	\end{equation}
	from which it follows that $\Dom(H_\gamma^{\rm r}) = \Dom(-\Delta+q) \cap \Dom(\ar)$. 
	
	By \eqref{asm:a2} in Assumption~\ref{asm:a.r} and \eqref{norm.eq.1}, $\gamma \as$ is a relatively bounded perturbation of
	$H_\gamma^{\rm r}$ with relative bound $0$, thus $\Dom(H_\gamma^{\rm r} + \gamma \as) = \Dom(H_\gamma^{\rm r})=\Dom(-\Delta+q) \cap \Dom(a)$.
	Moreover, $H_\gamma^{\rm r} + \gamma \as \subset H_\gamma$ and a standard perturbation argument shows that, for  sufficiently large
	positive $z$, we have $-\e^{ \ii \arg(\gamma)/2} z \in \rho(H_\gamma^{\rm r} + \gamma \as)$. Hence
	$\e^{- \ii  \arg(\gamma)/2}(H_\gamma^{\rm r} +\gamma \as)$ is m-sectorial,  and $H_\gamma^{\rm r} + \gamma \as = H_\gamma$,
	\cf~\cite[Sec.~V.3]{Kato-1966}.  
	
	To prove \eqref{T.sep}, it therefore remains to be shown that $\cD$ is a core of $H_\gamma^{\rm r}$ and that \eqref{norm.eq.1} holds.
	Take $\psi \in \Dom(H_\gamma^{\rm r})$ and notice that  $\ar \psi \in  L^2_{\rm loc}(\ov \Omega)$ by
	Assumption~\ref{asm:a.r}, thus $(-\Delta +q)\psi \in L^2_{\rm loc}(\ov \Omega)$ as well. We first prove the core property by a suitable
	cut-off, \cf~\cite[Proof of Thm.~8.2.1]{Davies-1995}. Let $\varphi$ be a $\CcRd$ function taking on non-negative values
	such that $\varphi(x) =1$ if $|x|<1$ and $\varphi(x)=0$ if $|x|>2$.
	For $\psi \in \Dom(H_\gamma^{\rm r})$ define
	\begin{equation}
	\psi_n(x) := \psi(x) \varphi_n(x), \quad \varphi_n(x):=\varphi \left(\frac xn \right), \quad x \in \Omega, \quad n \in \N.
	\end{equation}
	From the derived regularity of $\psi$ and the compactness of $\supp \varphi_n$, we conclude that $\{\psi_n\} \subset \cD$. Moreover,
	by the dominated convergence theorem, $\|\psi_n - \psi\| \to 0$ as $n \to \infty$, and 
	\begin{equation}
	\|H_\gamma^{\rm r}(\psi - \psi_n)\|
	\leq 
	\|(1-\varphi_n)(-\Delta + q + \gamma \ar)\psi\| +\| 2 \nabla \psi . \nabla \varphi_n + \psi \Delta \varphi_n\| \lto 0, \quad n \to \infty,
	\end{equation}
	since $\|\nabla \varphi_n\|_{\LiRd} = \frac 1n \|\nabla \varphi\|_{\LiRd}$ and $\|\Delta \varphi_n\|_{\LiRd} = \frac{1}{n^2} \|\Delta \varphi\|_{\LiRd}$.
	
	Next, we prove \eqref{norm.eq.1}. The second inequality in~\eqref{norm.eq.1} is obvious. 
	To prove the first one, we consider the cases $\Im \gamma \neq 0$ and $\gamma \neq 0$ only, the symmetric case with $\gamma > 0$ 
	being analogous and, in fact, simpler.  For every $\psi \in \cD$,
	\begin{equation}
	\begin{aligned}
	\|H_\gamma^{\rm r} \psi\|^2  & = \|(-\Delta +q) \psi\|^2 + |\gamma|^2 \|\ar \psi \|^2 + 2 \Re \langle (-\Delta+q) \psi, \gamma \ar \psi \rangle
	\\ 
	& = \|(-\Delta +q) \psi\|^2 + |\gamma|^2 \|\ar \psi \|^2 + 2 (\Re \gamma) \langle q^\frac12 \psi,\ar q^\frac12\psi \rangle
	\\  & \quad + 2 \Re (\gamma \langle \nabla \psi, \nabla (\ar \psi) \rangle);
	\end{aligned}
	\end{equation}
	note that the second step is justified since it can be verified that $\ar\psi \in \Dom(h_0)$. 
	Straightforward manipulations with the last term yield that
	\begin{equation}
	\begin{aligned}
	2 \Re (\gamma \langle \nabla \psi, \nabla (\ar \psi) \rangle) & = 2 (\Re \gamma) \langle \nabla \psi, \ar \nabla \psi \rangle + 2 \Re (\gamma \langle \nabla \psi, \psi \nabla \ar\rangle)
	\\
	& \geq 
	2 (\Re \gamma) \langle \nabla \psi,  \nabla (\ar \psi) \rangle
	- 4 |\gamma| \langle |\psi | |\nabla \ar|, |\nabla \psi|\rangle. 
	\end{aligned}
	\end{equation}
	Hence, for every $\eps_1 \in (0,1)$,
	\begin{equation}
	\begin{aligned}
	\|H_\gamma^{\rm r} \psi\|^2  
	& \geq \|(-\Delta +q) \psi\|^2 + |\gamma|^2 \|\ar \psi \|^2 + 2 (\Re \gamma) \langle (-\Delta + q) \psi,\ar \psi \rangle 
	\\ & \quad 
	- 4 |\gamma| \, \langle |\psi | |\nabla \ar|, |\nabla \psi|\rangle
	\\
	& \geq
	\frac{\eps_1}{1+\eps_1}  \|(-\Delta +q) \psi\|^2 + ((\Im \gamma)^2 - \eps_1(\Re \gamma)^2) \|\ar \psi \|^2 
	\\
	& \quad - 4 |\gamma| \, \langle |\psi | |\nabla \ar|, |\nabla \psi|\rangle
	\end{aligned}
	\end{equation}
	where we used Young's inequality in the last step.
	Since $\psi \in \Dom(H_0)$ and $\ar$ satisfies \eqref{asm:a.reg}, we see that, for every $\eps_2, \eps_3 \in (0,1)$,
	\begin{equation}\label{Gr.norm.mix}
	\begin{aligned}
	2  \langle |\psi ||\nabla \ar|, |\nabla \psi| \rangle & 
	\leq 2\langle (\eps_2 \ar^\frac 32 + M_\nabla(q^\frac12 +1))|\psi|, |\nabla \psi|\rangle
	\\
	&
	\leq \eps_2( \|\ar^\frac 12 \nabla \psi\|^2 + \|\ar\psi\|^2) 
	\\ & \quad+ 2 M_\nabla (\|\nabla \psi \|^2+ \| q^\frac12 \psi\|^2 +  \|\psi\|^2)
	\\
	& \leq \eps_2( \|\ar^\frac 12 \nabla \psi\|^2 + \|\ar\psi\|^2)  + \eps_3 \|(-\Delta +q)\psi\|^2 + C \|\psi\|^2
	\end{aligned}
	\end{equation}
	where $C$ is independent of $\psi$.
	Combining the estimates above, we obtain
	\begin{equation}\label{Gr.norm.mid}
	\begin{aligned}
	\|H_\gamma^{\rm r} \psi\|^2  
	& \geq
	\left(\frac{\eps_1}{1+\eps_1}  - 2|\gamma|\eps_3 \right) \|(-\Delta+q)\psi\|^2 
	\\
	& \quad +
	((\Im \gamma)^2 - \eps_1(\Re \gamma)^2 - 2 \eps_2|\gamma|) \|\ar \psi \|^2 
	\\
	& \quad - 2 \eps_2|\gamma| \, \|\ar^\frac 12 \nabla \psi\|^2 - 2 |\gamma|C \|\psi\|^2.
	\end{aligned}
	\end{equation}
	It remains to consider the term $\|\ar^\frac 12 \nabla \psi\|^2$ in~\eqref{Gr.norm.mid}. Clearly, we have
	\begin{equation}\label{Gr.norm.a1}
	2 \Im \langle H_\gamma^{\rm r} \psi, \sgn(\Im \gamma)(-\Delta + q) \psi \rangle \leq  \frac{1}{\eps_4} \|H_\gamma^{\rm r} \psi\|^2 + \eps_4 \|(-\Delta+q)\psi\|^2
	\end{equation}
	for any $\eps_4 \in (0,1)$.
	On the other hand,
	\begin{equation}\label{Gr.norm.a2}
	\begin{aligned}
	\Im \langle H_\gamma^{\rm r} \psi, \sgn(\Im \gamma)(-\Delta + q) \psi \rangle &=  
	\Im  (\gamma \sgn(\Im \gamma) \langle \ar \psi, (-\Delta+q)\psi \rangle) 
	\\
	& \geq  |\Im \gamma| \, \|\ar^\frac 12 \nabla \psi\|^2 -  |\gamma| \, \langle |\nabla \ar| |\psi|,|\nabla \psi|\rangle.
	\end{aligned}
	\end{equation}
	Thus using \eqref{Gr.norm.a1}, \eqref{Gr.norm.a2} in \eqref{Gr.norm.mid} and \eqref{Gr.norm.mix}, we arrive at
	\begin{equation}
	\begin{aligned}
	\left( 1+ \frac{1}{\eps_4 } \right) \|H_\gamma^{\rm r} \psi\|^2  
	&\geq  
	\left(\frac{\eps_1}{1+\eps_1}  -3  |\gamma| \eps_3 - \eps_4 \right) \|(-\Delta+q)\psi\|^2 
	\\
	& \quad +
	\left( (\Im \gamma)^2 - \eps_1(\Re \gamma)^2 -  3 |\gamma| \eps_2 \right) \|\ar \psi \|^2 
	\\
	& \quad + ( 2|\Im \gamma| -  3 |\gamma| \eps_2 ) \|\ar^\frac 12 \nabla \psi\|^2 - 3 |\gamma| C \|\psi\|^2.
	\end{aligned}
	\end{equation}
	Hence, we can successively select $\eps_1, \eps_2, \eps_3, \eps_4 \in (0,1)$ such that the coefficients of the first three terms
	are positive. Then a standard argument shows the existence of $k_1>0$, \cf~for instance~\cite[Proof of Lem.~2.9]{Boegli-2017-42},
	as required in \eqref{norm.eq.1}.

	The $\cC$-self-adjointness of $H_\gamma$ is straightforward by the representation
	theorem~\cite[Thm.~VI.2.1]{Kato-1966} and thus also \eqref{T.Jsa} follows.
\end{proof}

\begin{remark}\label{rem:q.reg}
	If $q$ satisfies certain regularity assumptions similar to those for $a$, then also
	\begin{equation}
	\Dom(-\Delta+q) = \Dom(\DD) \cap \Dom(q).
	\end{equation} 
	The latter holds \eg~if there is a decomposition $q =\qr+\qs$ with $\qr \geq 0$, $\qr \in W^{1,\infty}_{\rm loc}(\ov \Omega)$,
	$\qs \in \LtlocOm$ and, for each $\eps>0$, there are constants $M_{\nabla,q}=M_{\nabla,q}(\eps) \ge 0$ and
	\mbox{$M_{{\rm s},q} = M_{{\rm s},q}(\eps) \ge 0$}
	such that
	\begin{equation}
	|\nabla \qr| \leq \eps \qr^\frac 32 + M_{\nabla,q}
	\end{equation}
	and, for all $\psi \in \Dom(\DD) \cap \Dom(\qr)$,
	\begin{equation}\label{asm:q2}
	\|\qs \psi\| \leq \eps (\|\DD \psi\|+\|\qr \psi\|) + M_{{\rm s},q} \|\psi\|. 
	\end{equation}
	The proof is a simpler version of the proof of Theorem~\ref{thm:dom.H.gamma}.
\end{remark}

\begin{proof}[Proof of Theorem~\upshape \ref{thm:m-ac}]
	Using integration by parts, it is straightforward to check that, for all $\Phi:=(\phi_1,\phi_2) \in \Dom(G_0)$,
	\begin{equation}
	\langle G_0 \Phi, \Phi \rangle_\H
	= 2 \ii \Im 
	\big(
	\langle \nabla \phi_2, \nabla \phi_1 \rangle + \langle q^\frac 12 \phi_2, q^\frac 12 \phi_1 \rangle
	\big)
	- 2 \| a^\frac 12 \phi_2 \|^2.
	\end{equation}
	Thus $\Num (-G_0) \subset \{ z \in \C \, : \, \Re z \geq 0 \}$ and so $G_0$ is closable by \cite[Thm.~V.3.4]{Kato-1966}.
	
	Let $\cD$ be the core of $(-\Delta+q)$ defined in \eqref{core.def}. We prove that $\cD \times \CcOm \subset \Ran(G_0-1)$. To
	this end, we take an arbitrary $\Psi:=(\psi_1,\psi_2) \in \mathcal D \times \CcOm$ and find a solution
	$\Phi:= (\phi_1,\phi_2) \in \Dom(G_0)$ of $(G_0 -1) \Phi = \Psi$, \ie~of the system
	\begin{equation}
	\begin{aligned}
	- \phi_1 + \phi_2 & = \psi_1, \\
	(\Delta - q) \phi_1 -(2a + 1) \phi_2 & = \psi_2.
	\end{aligned}
	\end{equation}
	Solving the first equation for $\phi_2$ and inserting this into the second equation, we get
	\begin{equation}
	(-\Delta + q + 2 a + 1) \phi_1 = - (\psi_2 + (2a + 1) \psi_1).
	\end{equation}
	Note that the left hand side equals $T(1) \phi_1$ with $T(\la)$ defined in Section~\ref{sec:T.def}, \cf~\eqref{T.def}. Moreover, for $\la=1$, $\Dom(T(1)) = \Dom(-\Delta+q) \cap \Dom(a)$, \cf~Theorem~\ref{thm:dom.H.gamma}, and $0 \notin \sigma (T(1))$ since $T(1)$ is uniformly positive. Thus $T(1)^{-1}$ is a bounded operator in $\LOm$ and hence we obtain the solution $\Phi= (\phi_1,\phi_2)$,
	\begin{equation}
	\phi_1 = - T(1)^{-1}(\psi_2 + (2a + 1) \psi_1), \qquad \phi_2= \psi_1 + \phi_1. 
	\end{equation}
	Since $\psi_1 \in \cD \subset \Dom(-\Delta+q)$ and $\supp \psi_1$ is compact, we have $a \psi_1 \in \LOm$
	due to \eqref{asm:a2} and $\ar \in L^\infty_{\loc}(\ov \Omega)$. By Theorem~\ref{thm:dom.H.gamma} and because
	$\psi_2 \in \CcOm$, we see that $\phi_1 \in \Dom(T(1))$ and thus $\phi_2 \in \Dom(T(1))$ since $\psi_1 \in \cD \subset \Dom(T(1))$.
	Altogether this proves $\Phi \in \Dom(G_0)$.
\end{proof}

\section{Spectral equivalence for the generator $G$ and the associated quadratic function $T$}
\label{sec:sp.Gt}

In this section we prove spectral equivalence for the generator $G$ and the quadratic operator function $T$. 
To this end, we first derive some basic spectral properties of the operator family $T(\la)$, $\la \in \Cla$. 
\begin{proposition}
\label{prop:sp.T}
Let $a,q \in \LolocOmR$ and $a,q \geq 0$, let $T(\la)$, $\la \in \Cla$, be as in \eqref{T.def} and let $\ai:=\essinf(a)$. Then, for every $\la \in \Cla$,
\begin{enumerate}[{\upshape i)}]
	\item \label{prop:sp.T.i} $0 \in \sigma(T(\la)) \ \Longleftrightarrow \ 0 \in \sigma(T(\ov \la))$,
	\item \label{prop:sp.T.ii} $0 \in \sigma(T(\la)) \ \Longrightarrow \  \Re \la \leq - \ai \ \text{and} \ |\la|^2 \geq \inf \sigma(-\Delta +q)$,
	\item \label{prop:sp.T.iii} if, in addition, Assumption \ref{asm:a.gr} is satisfied, then, for all $\la \in \Cla$, 
	\begin{equation}
	0 \in \sigma(T(\la)) \ \Longleftrightarrow \ 0 \in \spd(T(\la))
	\end{equation}
	and the set $\{ \la \in \Cla \, : \, 0 \in \sigma(T(\la))\}$ consists only of isolated points which may accumulate at most at $(-\infty,0]$.
\end{enumerate}
\end{proposition}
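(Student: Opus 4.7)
Part (i) follows from the $\cC$-self-adjointness identity $T(\lambda)^* = T(\bar\lambda)$, which is derivable from the first representation theorem applied to the relation $\overline{\widetilde h_{2\lambda}[\phi,\psi]} = \widetilde h_{2\bar\lambda}[\psi,\phi]$ between the defining forms (only $L^1_{\loc}$-regularity is needed; the identity is also recorded in~\eqref{T.Jsa}). Combined with the general fact $\sigma(A^*) = \overline{\sigma(A)}$, this gives the equivalence immediately.

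For part (ii), my plan is to exploit the spectral inclusion $\sigma(T(\lambda)) \subseteq \overline{\Num(T(\lambda))}$, which $T(\lambda)$ inherits as a scalar shift of the rotated m-sectorial operator $\widetilde H_{2\lambda}$ from Lemma~\ref{lem:h.def}.i (\cf~\cite[Sec.~V.3]{Kato-1966}). For real $\lambda > 0$ the operator $T(\lambda)$ is self-adjoint and bounded below by $\lambda^2 > 0$, so $0 \notin \sigma(T(\lambda))$ and the implication is vacuous; hence one may assume $\Im\lambda \neq 0$. The inclusion then produces a normalized sequence $\{\psi_n\} \subset \Dom(T(\lambda))$ with $\langle T(\lambda)\psi_n, \psi_n\rangle \to 0$, whose imaginary part reads
\begin{equation*}
2(\Im\lambda)\bigl(\langle a\psi_n, \psi_n\rangle + \Re\lambda\bigr) \to 0.
\end{equation*}
Combined with $\langle a\psi_n, \psi_n\rangle \geq \ai$ (from $a \geq \ai$ a.e.), this forces $\Re\lambda \leq -\ai$. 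Substituting the resulting limit $\langle a\psi_n, \psi_n\rangle \to -\Re\lambda$ into the real part yields $\langle(-\Delta+q)\psi_n, \psi_n\rangle \to |\lambda|^2$, and the variational bound $\langle(-\Delta+q)\psi_n, \psi_n\rangle \geq \inf\sigma(-\Delta+q)$ furnishes the second inequality.

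For part (iii), Assumption~\ref{asm:a.gr} together with Lemma~\ref{lem:h.def}.\ref{H.comp.res} ensures $T(\lambda)$ has compact resolvent for every $\lambda \in \Cla$; consequently $\sigma(T(\lambda))$ is purely discrete and the equivalence $0 \in \sigma(T(\lambda)) \Leftrightarrow 0 \in \spd(T(\lambda))$ is automatic. For the isolation statement, I would apply the analytic Fredholm theorem for holomorphic families of closed Fredholm operators of index zero to the family $T(\cdot)$ on the connected open set $\Cla$ (Lemma~\ref{lem:h.def}.\ref{H.hol.it}): since $T(1) = -\Delta + q + 2a + 1$ is uniformly positive and hence boundedly invertible, the alternative of non-invertibility throughout $\Cla$ is ruled out, so the set $\cE := \{\lambda \in \Cla : 0 \in \sigma(T(\lambda))\}$ is a discrete subset of $\Cla$ and its accumulation points can only lie on the boundary $(-\infty, 0]$.

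The main technical point is the appeal to the analytic Fredholm theorem in part (iii) for an unbounded closed-operator family. The compactness of $T(\lambda_0)^{-1}$ at any $\lambda_0 \in \Cla$ (Lemma~\ref{lem:h.def}.\ref{H.comp.res}) makes each $T(\lambda)$ Fredholm of index zero, and the holomorphic dependence of resolvents (Lemma~\ref{lem:h.def}.\ref{H.hol.it}) together with the invertibility of $T(1)$ then puts the problem within the scope of a standard version of the theorem; no deeper regularity of $a$ beyond $L^1_{\loc}$ and Assumption~\ref{asm:a.gr} is required at this step.
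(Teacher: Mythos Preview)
Your proposal is correct and follows essentially the same approach as the paper: part (i) via $T(\lambda)^*=T(\bar\lambda)$, part (ii) via the numerical range inclusion and separation into real/imaginary parts (the paper phrases this by contradiction, you phrase it directly, but the computation is identical), and part (iii) via holomorphy, compact resolvent, and invertibility at $\lambda_0=1$ (the paper cites Kato~\cite[Thm.~VII.1.10]{Kato-1966}, which is precisely the analytic Fredholm alternative you invoke). Your remark that $T(\lambda)^*=T(\bar\lambda)$ needs only $L^1_{\loc}$-regularity and follows directly from the form relation is a useful clarification, since the paper's cross-reference~\eqref{T.Jsa} points to Theorem~\ref{thm:dom.H.gamma} where Assumption~\ref{asm:a.r} is in force.
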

\begin{proof}
	i) The claim is immediate from $T(\la)^* = T(\ov \la)$, $\la \in \Cla$, \cf~\eqref{T.Jsa}.
	
	ii) We rely on a numerical range argument. Recall that $T(\la)$ is defined through the sectorial form $\widetilde h_{2\la}$ and so for
	$0 \in \sigma(T(\lambda))$ it is necessary that $0 \in \ov{\Num(T(\la))} = $ $
	\e^{\ii \arg(\la)/2}\Num(\widetilde h_{2\la}) + \lambda^2$, \cf~Lemma~\ref{lem:h.def}. 
	But this is impossible if $\Re \la >0$ by the enclosure \eqref{h.gam.nr} with $\gamma=2\la$.
	We proceed further by contradiction. Let $\la \in \Cla$ with $\Re \la \leq 0$ be such that $0 \in \sigma(T(\la))$ and 
	$\ai + \Re \la > \eps >0$ or $\inf \sigma(-\Delta +q)-|\la|^2 > \eps >0$. By the numerical range argument above, there is a
	sequence $\{z_n\} \subset \e^{\ii \arg(\la)/2}\Num(\widetilde h_{2\la}) + \lambda^2$ such that $z_n \to 0$. Then there is a sequence
	$\{\psi_n\} \subset \Dom(\widetilde h_{2\la})$, $\|\psi_n\|=1$, such that
	\begin{equation}\label{nr.zn}
	\|\nabla \psi_n\|^2 + \|q^\frac 12 \psi_n\|^2 + 2\la \|a^\frac 12 \psi_n\|^2 + \la^2 \|\psi_n\|^2 = z_n.
	\end{equation}
	Taking the real and imaginary part of \eqref{nr.zn}, we find 
	\begin{align}
	\|\nabla \psi_n\|^2 + \|q^\frac 12 \psi_n\|^2 + 2 \Re\la \|a^\frac 12 \psi_n\|^2 + (\Re\la)^2 - (\Im\la)^2  &= \Re z_n,
	\label{nr.zn.re}
	\\
	2 \Im\la \big( \|a^\frac 12 \psi_n\|^2 +  \Re\la  \big)  &= \Im z_n.
	\label{nr.zn.im}
	\end{align}
	Recall that $\Im \la \neq 0$ since $\la \in \Cla$ with $\Re \la \leq 0$.
	
	First consider the case when $\ai + \Re \la > \eps >0$. Then \eqref{nr.zn.im} yields
	\begin{equation}
	\frac{|\Im z_n|}{2 |\Im \la|} =  \|a^\frac 12 \psi_n\|^2 +  \Re\la  > \eps >0,
	\end{equation}
	a contradiction to $z_n \to 0$.
	
	In the second case when $\inf \sigma(-\Delta +q)-|\la|^2 > \eps >0$, we solve \eqref{nr.zn.im} for $\|a^{1/2} \psi_n\|^2$ 
	and insert this into \eqref{nr.zn.re} to obtain
	\begin{equation}\label{nr.zn.eq}
	\|\nabla \psi_n\|^2 + \|q^\frac 12 \psi_n\|^2 - |\la|^2 = \Re z_n - \frac{\Re \la}{\Im \la} \Im z_n.  
	\end{equation}
	Since the minimum of the spectrum of a self-adjoint operator coincides with the infimum of its numerical range and by the assumption on $|\la|$, we have
	\begin{equation}\label{nr.inf}
	\|\nabla \psi_n\|^2 + \|q^\frac 12 \psi_n\|^2 - |\la|^2 
	\geq \inf_{\psi \in \Dom(\widetilde h_{2\la}), \|\psi\|=1} (\|\nabla \psi\|^2 + \|q^\frac 12 \psi\|^2) - |\la|^2  > \eps.
	\end{equation}
	Inserting \eqref{nr.inf} into \eqref{nr.zn.eq}, we again arrive at a contradiction to $z_n \to 0$.
	
	iii) The claim follows from \cite[Thm.~VII.1.10]{Kato-1966} if we show that $T(\la)$, $\la \in \Cla$, is a holomorphic family of closed operators in $\LOm$
	with compact resolvent and that there is a $\la_0 \in \C \setminus (-\infty,0]$ for which $T(\la_0)^{-1}$ exists and is bounded in $\LOm$. The compactness of the
	resolvents is proved in Lemma~\ref{lem:h.def} \ref{H.comp.res} and $T(\la)$ is holomorphic since $H_{2\lambda}$ is holomorphic,
	\cf~Lemma~\ref{lem:h.def} \ref{H.hol.it}, and $\la^2$, viewed as a multiplication operator, is a bounded holomorphic family, \cf~\cite[Prob.~VII.1.2]{Kato-1952-125}.
	Since, for any $\la_0>0$, $T(\la_0)$ is a uniformly positive operator, we can choose $\la_0\in(0,\infty)$ arbitrarily.
\end{proof}

In the case where the spectrum is discrete and there are no real eigenvalues, it is possible to extend Proposition~\ref{prop:sp.T}.\ref{prop:sp.T.ii} and
derive further estimates on the absolute values of eigenvalues for quadratic pencils, \cf~\cite{Freitas-1999-78} for the matrix case and also for wave
equations on bounded domains with bounded damping via discretization.

\begin{theorem}\label{thm:GT}
	Let $a,q$ satisfy Assumption \ref{asm:a.r}, and let $G$, $T(\la)$ be as in \eqref{G.def}, \eqref{T.def}, respectively. Then, for all $\la \in \Cla$,
	\begin{equation}\label{sp.TG.eq}
	\begin{aligned}
	\la \in \sigma(G) \quad &\Longleftrightarrow \quad 0 \in \sigma(T(\la)),
	\\
	\la \in \spp(G) \quad&\Longleftrightarrow \quad 0 \in \spp(T(\la)),
	\\
	\la \in \se{2}(G) \quad&\Longleftrightarrow \quad 0 \in \se{2}(T(\la)),
	\end{aligned}
	\end{equation}
	and
	\begin{equation}
	\psi \in \Ker(T(\la)) \iff (\psi,\la \psi) \in \Ker(G-\la).
	\end{equation}
	If, in addition, $a$ satisfies Assumption \ref{asm:a.gr}, then $\sigma(G) \cap \Cla$ consists only of eigenvalues of finite multiplicity which may only accumulate at $(-\infty,0]$.
\end{theorem}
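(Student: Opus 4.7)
The proof plan rests on the Frobenius--Schur-type equivalence
\[
(G-\la)\begin{pmatrix} u \\ v \end{pmatrix} = \begin{pmatrix} f_1 \\ f_2 \end{pmatrix}
\iff
v = f_1 + \la u \ \text{ and } \ T(\la) u = -(2a+\la) f_1 - f_2,
\]
which formally reduces the spectral analysis of $G-\la$ on $\cH = \mathcal W(\Omega) \times \LOm$ to that of $T(\la)$ on $\LOm$. I would exploit this reduction in turn for kernels, full resolvent sets, and Weyl sequences.

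\emph{Kernel correspondence and $\spp$-equivalence.} For $\psi \in \Dom(T(\la))$, Theorem~\ref{thm:dom.H.gamma} gives $\Dom(T(\la)) = \Dom(-\Delta+q) \cap \Dom(a)$, so $(\psi, \la\psi) \in \Dom(G_0) \subset \Dom(G)$, and a direct computation yields $(G-\la)(\psi, \la\psi) = (0, -T(\la)\psi)$, giving one direction of the kernel identity. Conversely, any $(\phi_1, \phi_2) \in \Ker(G-\la)$ satisfies $\phi_2 = \la \phi_1$ from the first component, and approximation by elements of the core $\cD \times \CcOm$ combined with the separation bound \eqref{norm.eq.1} shows $\phi_1 \in \Dom(T(\la))$ with $T(\la)\phi_1 = 0$; this yields both the kernel identity and $\la \in \spp(G) \iff 0 \in \spp(T(\la))$.

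\emph{Resolvent equivalence.} Assume $0 \in \rho(T(\la))$. On the dense subset $\cD \times \CcOm \subset \cH$ the formulas $u := -T(\la)^{-1}[(2a+\la) f_1 + f_2]$, $v := f_1 + \la u$ produce preimages in $\Dom(G_0)$. The principal technical obstacle is extending this to all of $\cH$: for generic $f_1 \in \mathcal W(\Omega)$ the product $(2a+\la) f_1$ need not lie in $\LOm$, so $T(\la)^{-1}$ cannot be applied directly. I would resolve this via the identity (valid on $\cD$ for $\la \neq 0$)
\[
T(\la)^{-1}(2a+\la) = \frac{1}{\la}\bigl(I - T(\la)^{-1}(-\Delta+q)\bigr),
\]
interpreting $(-\Delta+q)$ as the canonical duality map $\mathcal W(\Omega) \to \mathcal W(\Omega)^*$ and $T(\la)^{-1}$ through its form-theoretic extension to $\mathcal W(\Omega)^*$, with boundedness of the resulting composition controlled by \eqref{norm.eq.1} applied to $T(\ov\la)$; the case $\la>0$ is simpler since $T(\la)$ is then uniformly positive. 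Once the bounded extension is available, the displayed formulas define a right inverse of $G-\la$, and injectivity from the previous step promotes it to the resolvent. The converse implication, $\la \in \rho(G) \Rightarrow 0 \in \rho(T(\la))$, is more routine: applying $(G-\la)^{-1}$ to $(0,-g)$ for $g \in \LOm$ gives a unique $u$ with $T(\la)u = g$, and boundedness of $(G-\la)^{-1}$ transfers to a bounded $T(\la)^{-1}$.

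\emph{Essential spectra and final claim.} Given a singular sequence $\{\psi_n\} \subset \Dom(T(\la))$ with $\|\psi_n\|=1$, $\psi_n \wto 0$, $T(\la)\psi_n \to 0$ in $\LOm$, set $\Psi_n := c_n^{-1}(\psi_n, \la\psi_n)$ with $c_n := \|(\psi_n, \la\psi_n)\|_\cH$; uniform boundedness of $c_n$ via \eqref{norm.eq.1} together with the lower bound $c_n \geq |\la|$ (for $\la \neq 0$; the case $\la=0$ is handled separately via the weak convergence in $\LOm$) gives $(G-\la)\Psi_n \to 0$ in $\cH$, and weak compactness of bounded sequences in $\mathcal W(\Omega)$ propagates $\psi_n \wto 0$ in $\LOm$ to $\Psi_n \wto 0$ in $\cH$. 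The converse direction extracts $\phi_1^n$ from a Weyl sequence $(\phi_1^n, \phi_2^n)$ for $G$ at $\la$ and uses the first row $\phi_2^n - \la\phi_1^n \to 0$ to reduce the second row to $T(\la)\phi_1^n \to 0$. Finally, the last assertion follows by combining Proposition~\ref{prop:sp.T}\ref{prop:sp.T.iii}, which yields under Assumption~\ref{asm:a.gr} that $\{\la \in \Cla : 0 \in \sigma(T(\la))\}$ consists of isolated points accumulating only at $(-\infty,0]$ and lying in $\spd(T(\la))$, with the spectral equivalence just established; finite algebraic multiplicity is transferred from $T(\la)$ to $G$ via the type--(A) holomorphy from Theorem~\ref{thm:dom.H.gamma} and the Riesz projector calculus.
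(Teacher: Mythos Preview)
Your forward implications are fine and close to the paper's. The gap is in all three \emph{backward} directions, where you tacitly assume that an element (or sequence) in $\Dom(G)$ produces an element (or sequence) in $\Dom(T(\la))$. Since $G=\ov{G_0}$ is only known as a closure, you approximate $(\phi_1,\phi_2)\in\Dom(G)$ by $(\phi_1^n,\phi_2^n)\in\Dom(G_0)$ in graph norm and then try to deduce $T(\la)\phi_1^n\to 0$. Writing out the second row gives
\[
T(\la)\phi_1^n \;=\; 2a\bigl(\la\phi_1^n-\phi_2^n\bigr) \;+\; \la^2\phi_1^n-\la\phi_2 \;+\; o(1)_{\LOm},
\]
and while $\la\phi_1^n-\phi_2^n\to 0$ in $\mathcal W(\Omega)$ from the first row, this does \emph{not} force $a(\la\phi_1^n-\phi_2^n)\to 0$ in $\LOm$ because $a$ is unbounded and not controlled by the $\mathcal W(\Omega)$-norm. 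The separation bound \eqref{norm.eq.1} is of no help here: it bounds $\|a\psi\|$ by $\|T(\la)\psi\|+\|\psi\|$, which is circular. The same obstruction hits your converse for $\se{2}$ (you cannot extract a $T(\la)$-Weyl sequence from a $G$-Weyl sequence this way; in addition you would need $\|\phi_1^n\|_{\LOm}$ bounded below and $\phi_1^n\wto 0$ in $\LOm$, neither of which follows from the $\cH$-normalisation) and your converse for the resolvent (from $(u,v)=(G-\la)^{-1}(0,-g)$ you only get $(u,v)\in\Dom(G)$, not $u\in\Dom(T(\la))$).

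The paper avoids precisely this issue by never trying to pass from $\Dom(G)$ to $\Dom(T(\la))$ directly. For $\Ker(G-\la)$ it argues by duality: it exhibits a densely defined restriction $G_0^{\rm c}\subset G^*$ on $\cD\times\cD$ and uses $\langle\Psi,(G_0^{\rm c}-\ov\la)\Phi\rangle_\cH=0$ together with the core property of $\cD$ for $T(\la)^*$ (Theorem~\ref{thm:dom.H.gamma}) to conclude $\psi_2\in\Dom(T(\la))$, $T(\la)\psi_2=0$. For $\se{2}$ it proves the contrapositive: if $0\notin\se{2}(T(\la))$ then $T(\la)$ is Fredholm (via $\cC$-self-adjointness), one takes a generalized inverse $T(\la)^\#$ and builds a bounded left approximate inverse $R_\la$ of $G-\la$ (the key boundedness being that of $T(1)^{-1}(-\Delta+q)$ on $\mathcal W(\Omega)$, which is essentially your identity), whence $G-\la$ is semi-Fredholm. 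The resolvent equivalence then falls out of the kernel/cokernel/$\se{2}$ statements and $\cC$-self-adjointness of $T(\la)$, without ever inverting $G-\la$ by hand. Your bounded-extension idea for $T(\la)^{-1}(-\Delta+q)$ is the right ingredient; what is missing is a mechanism to circumvent the $\Dom(G)\not\subset\Dom(T(\la))\times\Dom(T(\la))$ problem, and for that the paper's duality/contrapositive route seems essential.
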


\begin{proof}
	Let $\la \in \Cla$ be fixed. We split the proof into several steps.
	
	\noindent
	$\bullet$ Claim i): $\Psi=(\psi_1,\psi_2) \in \Ran(G-\la)^\perp \Longrightarrow \ov \la \psi_1 + \psi_2 = 0$ and $\psi_2 \in \Ker(T(\la)^*)$.  
	
	To see this, take $\Phi=(\phi_1,\phi_2) \in \Dom(G_0)$ with $G_0$ as in \eqref{G0.def}. Then we have $\langle (G_0-\la) \Phi,\Psi \rangle_\cH=0$ or, equivalently, \cf~\eqref{Hilb.space}, 
	\begin{equation}\label{Ran.perp.G0}
	\langle \nabla (\phi_2 - \la \phi_1), \nabla \psi_1 \rangle + \langle q^\frac12 (\phi_2 - \la \phi_1), q^\frac12 \psi_1 \rangle + \langle (\Delta-q) \phi_1 - (2a+\la) \phi_2, \psi_2 \rangle=0.
	\end{equation}
	If we set $\phi_2=\la \phi_1$, we get $\langle T(\la) \phi_1, \psi_2 \rangle =0$ for all $\phi_1 \in \Dom(T(\la))$. Hence, $\psi_2 \in \Dom(T(\la)^*) = \Dom(T(\la))$ and $T(\la)^* \psi_2 =0$.
	On the other hand, if we set $\phi_2=0$, then \eqref{Ran.perp.G0} and $\psi_2 \in \Dom(T(\la)^*) \subset \Dom(h_0)$, \cf~Theorem~\ref{thm:dom.H.gamma}, imply that
	\begin{equation}
	\langle \nabla \phi_1, \nabla (\ov \la \psi_1 + \psi_2) \rangle + \langle q^\frac12 \phi_1, q^\frac12 (\ov \la \psi_1 + \psi_2) \rangle =0.
	\end{equation}
	Since $\Dom(-\Delta + q) \cap \Dom(a)$ is dense in $\mathcal W(\Omega)$, we obtain $\ov \la \psi_1 + \psi_2 =0$.
	
	\noindent
	$\bullet$ Claim ii): $\Psi=(\psi_1,\psi_2) \in \Ker(G-\la) \Longleftrightarrow \la \psi_1 - \psi_2 = 0$ and $\psi_2 \in \Ker(T(\la))$.  
	
	It is straightforward to check the implication ``$\Longleftarrow$'' since the assumptions imply that
	$\Psi \in \Dom(G_0)$, \cf~Theorem~\ref{thm:dom.H.gamma} and \eqref{G0.def}. To prove the implication ``$\Longrightarrow$'', we first integrate by parts to conclude that the operator
	\begin{equation}
	G_0^{\rm c} := 
	\begin{pmatrix}
	0 & -I \\
	-\Delta + q & -2a
	\end{pmatrix}, 
	\quad
	\Dom(G_0^{\rm c}) := \cD \times \cD,
	\end{equation}
	with $\cD$ defined as in \eqref{core.def} is a densely defined restriction of $G^* = G_0^*$. Then $\Psi \in \Ker(G-\la)$ implies that, for all $\Phi = (\phi_1,\phi_2) \in \Dom(G_0^{\rm c})$, 
	\begin{equation}
	0=\langle (G-\la)\Psi, \Phi \rangle_\cH =\langle \Psi, (G_0^{\rm c} - \ov \la) \Phi \rangle_\cH
	\end{equation}
	or, equivalently,
	\begin{equation}\label{Ker.G}
	-\langle \nabla \psi_1, \nabla (\ov \la \phi_1 + \phi_2) \rangle - \langle q^\frac12 \psi_1, q^\frac12 (\ov \la \phi_1 + \phi_2) \rangle - \langle \psi_2, (\Delta-q) \phi_1 + (2a+\ov \la) \phi_2 \rangle=0.
	\end{equation}
	Setting $\phi_2=-\ov \la \phi_1$, we obtain $\langle \psi_2, T(\la)^* \phi_1 \rangle=0$ for all $\phi_1 \in \cD$. Since $\cD$ is a core of $T(\la)^*$, we have $\psi_2 \in \Dom(T(\la))$ and $T(\la)\psi_2=0$. Finally, setting $\phi_2=0$ and using \eqref{Ker.G}, we find that, for all $\phi _1 \in \cD$,
	\begin{equation}
	\langle \nabla (\la \psi_1-\psi_2), \nabla \phi_1 \rangle + \langle q^\frac12 (\la \psi_1-\psi_2), q^\frac12 \phi_1 \rangle =0,
	\end{equation}
	hence $\la \psi_1-\psi_2 =0$ because $\cD$ is dense in $\mathcal W(\Omega)$.
	
	\noindent
	$\bullet$ Claim iii): $0 \in \se{2}(T(\la)) \Longleftrightarrow \la \in \se{2}(G)$.
	
	Let $0 \in \se{2}(T(\la))$ and let $\{\psi_n\} \subset \Dom(T(\la))$ be a corresponding singular sequence, \ie~$\|\psi_n\|=1$, $\psi_n \xrightarrow{w} 0$ and $T(\la) \psi_n \to 0$
	in $\LOm$ as $n \to \infty$.
	Then $\Psi_n:=(\psi_n, \la \psi_n) \in \Dom(G_0)$, $n \in \N$, and 
	\begin{equation}
	\frac{\|(G_0-\la)\Psi_n\|_\cH}{\|\Psi_n\|_\cH} \leq \frac{\|T(\la) \psi_n\|}{|\la| } \to 0, \quad n \to \infty.
	\end{equation}
	Thus it remains to be verified that $\widetilde \Psi_n:=\Psi_n/\|\Psi_n\|_\cH \xrightarrow{w} 0 $ as $n \to \infty$ in $\cH$. Since $\|\widetilde \Psi_n\|_\cH=1$, it suffices
	to check weak convergence on $\cD \times \cD$ which is dense in $\cH$. Indeed, for $\Phi=(\phi_1,\phi_2) \in \cD \times \cD$,
	\begin{equation}
	|\langle \widetilde \Psi_n, \Phi \rangle_\cH| \leq \frac{|\langle \psi_n, (-\Delta+q) \phi_1 \rangle| + |\la| |\langle \psi_n, \phi_2 \rangle|}{|\la|} \to 0, \quad n \to \infty,
	\end{equation}
	since $\psi_n \xrightarrow{w} 0 $ in $\LOm$ as $n \to \infty$. Hence the implication ``$\Longrightarrow$'' is proved.
	
	To prove the reverse implication ``$\Longleftarrow$'', assume that $0 \notin  \se{2}(T(\la))$. In order to show that $\la \notin \se{2}(G)$, we construct a (bounded)
	left approximate inverse, \cf~\cite[Def.~I.3.8]{EE}, of $G-\la$. Then it follows from \cite[Thm.~I.3.13]{EE} that $G-\la$ is semi-Fredholm. 
	Moreover, we have $\dim \Ker(G-\la)<\infty$ by claim ii) proved above.

	It remains to construct a left approximate inverse of $G-\la$. Since $T(\la)$ is $J$-self-adjoint, we have $\dim \Ker(T(\la)) = \dim \Ker(T(\la)^*)$, 
	\cf~\cite[Lem.~III.5.4]{EE}, thus $T(\la)$ is Fredholm. Hence there exists a generalized inverse $T(\la)^\#$, 
	\cf~\cite[Sec.~5]{Nashed-1976},~\ie
	\begin{equation}\label{T.dag.id}
	\begin{aligned}
	T(\la) T(\la)^\#\psi &= \psi - Q \psi, \quad \psi \in \LOm,
	\\
	T(\la)^\# T(\la)  \psi &= \psi - P \psi, \quad \psi \in \Dom(T(\la)),
	\end{aligned}
	\end{equation}
	where $P,Q$ are the orthogonal projections on $\Ker(T(\la))$, $\Ker(T^*(\la))$, respectively.
	
	Let $\Phi=(\phi_1,\phi_2) \in \Dom(G_0)$ and $\Psi=(\psi_1,\psi_2) \in \cH$ be so that $(G_0-\la)\Phi=\Psi$,~\ie
	\begin{equation}
	\begin{aligned}
	\phi_2 - \la \phi_1 & = \psi_1, \\
	(\Delta-q) \phi_1 - (2a +\la) \phi_2 & = \psi_2;
	\end{aligned}
	\end{equation}
	notice that $\psi_1 \in \Dom(-\Delta+q)$ by the first equation and since $\Phi \in \Dom(G_0)$.
	Solving the first equation for $\phi_1$, \ie~$\phi_1 = \la^{-1}(\phi_2 - \psi_1)$, and inserting this expression into the second equation, we obtain,
	after multiplication by $\la$,
	\begin{equation}
	T(\la) \phi_2 = (-\Delta + q) \psi_1 -\la \psi_2. 	
	\end{equation}
	Applying the generalized inverse $T(\la)^\#$, we find
	\begin{equation}
	\phi_2 = T(\la)^\# (-\Delta + q) \psi_1 - \la T(\la)^\# \psi_2 + P \phi_2, 
	\end{equation}
	and thus, recalling that $\phi_1 = \la^{-1}(\phi_2 - \psi_1)$, we arrive at
	\begin{equation}\label{Rla.def}
	\hspace{-0.051cm}
	\begin{pmatrix}
	\phi_1 \\ \phi_2
	\end{pmatrix}
	=
	\underbrace{
		\left(	
		\begin{array}{lr}
		\frac 1 \la 
		\left(
		T(\la)^\# (-\Delta + q) -I \right) & \hspace{-0.2cm} -T(\la)^\#
		\\[1mm]
		\quad \ T(\la)^\# (-\Delta + q) & \hspace{-0.2cm} - \la T(\la)^\#
		\end{array}
		\right)}_{=:\widehat R_\la}
	\begin{pmatrix}
	\psi_1 \\ \psi_2
	\end{pmatrix}
	+
	\underbrace{
		\begin{pmatrix}
		0	& \frac 1 \la P
		\\
		0 & P
		\end{pmatrix}
	}_{=:K_\la}
	\begin{pmatrix}
	\phi_1 \\ \phi_2
	\end{pmatrix}.
	\end{equation}
	Hence, for all $\Phi \in \Dom(G_0)$, $\widehat R_\la (G_0 -\la) \Phi = \Phi - K_\la \Phi$
	and $K_\la$ is compact since $P$ has finite rank and is everywhere defined, both as an operator in $\LOm$ and as an operator from $\LOm$ to $\mathcal W(\Omega)$
	because $\Ker(T(\la)) \subset \Dom(T(\la)) \subset \mathcal W(\Omega)$.

	Next we show that $\widehat R_\la$ has a bounded extension $R_\la$ onto $\H$, which is a left approximate inverse for the closure $G-\la$ of $G_0-\la$, \ie~, 
	\begin{equation}\label{G.li}
	R_\la (G -\la) \Phi = \Phi - K_\la \Phi, \quad \Phi \in \Dom(G).
	\end{equation}
	To this end, in the representation of $\widehat R_\la$, \cf~\eqref{Rla.def}, we replace $T(\la)^\#$ first by $(T(\la)+\la_0)^{-1}$ with some
	$\la_0 \in \rho(T(\la)) \neq \emptyset$ and then the latter by the self-adjoint operator $T(1)^{-1}$. More precisely, with the help of \eqref{T.dag.id}, we derive the resolvent-type identities
	\begin{equation}
	\begin{aligned}
	T(\la)^\# &= T(\la)^\# (T(\la)+\la_0)(T(\la)+\la_0)^{-1} 
	\\ &
	= 
	(I-P)(T(\la)+\la_0)^{-1}  + \la_0 T(\la)^\# (T(\la)+\la_0)^{-1},
	\\
	T(\la)^\# &= (T(\la)+\la_0)^{-1}(T(\la)+\la_0) T(\la)^\#
	=(T(\la)+\la_0)^{-1} (I-Q + \la_0 T(\la)^\#),
	\end{aligned}
	\end{equation}
	hence
	\begin{equation}
	T(\la)^\# = (I-P)(T(\la)+\la_0)^{-1} +\la_0  (T(\la)+\la_0)^{-1} (I-Q + \la_0 T(\la)^\#) (T(\la)+\la_0)^{-1}.
	\end{equation}
	Similarly,
	\begin{equation}
	\begin{aligned}
	(T(\la)+\la_0)^{-1} &= T(1)^{-1} - (T(\la)+\la_0)^{-1} (2(\la-1) a + \la^2 -1 +\la_0 ) T(1)^{-1}, 
	\\ 
	(T(\la)+\la_0)^{-1} &= T(1)^{-1} - T(1)^{-1} (2(\la-1)a + \la^2-1+\la_0)    (T(\la)+\la_0)^{-1}.
	\end{aligned}
	\end{equation}
	Since $\Dom(T(\la)) = \Dom(T(\la)^*) = \Dom(-\Delta+q) \cap \Dom(a)$ for all $\la \in \Cla$, the composition $a(T(\la)+\la_0)^{-1}$ is bounded on
	$\LOm$; since $(T(\la)+\la_0)^{-1}a \subset (a ((T(\la)+\la_0)^{-1})^*)^*$, the operator $(T(\la)+\la_0)^{-1}a$ has a bounded extension onto $\LOm$.
	
	A careful inspection of the individual terms in $\widehat R_\la$ using the identities derived for $T(\la)^\#$ shows that the
	most problematic term is $T(1)^{-1}(-\Delta+q)$; we will show that it has an extension to a bounded operator from
	$\mathcal W (\Omega )$ to $\mathcal W (\Omega )$. The remaining terms can be handled in a similar (simpler) way;
	notice also that the
	terms containing $P$ or $Q$ are of finite rank and everywhere defined since $\Dom(T(\la)) = \Dom(T(\la)^*) \subset \mathcal W(\Omega)$.
	
	Now let $\phi \in \Dom (-\Delta+q)$. Then, using the second representation theorem \cite[Thm.~VI.2.23]{Kato-1966} for $-\Delta+q$ and denoting
	$\psi :=(-\Delta+q)^{1/2} \phi$, we obtain
	\begin{equation}
	\begin{aligned}
	&\frac{\|\nabla T(1)^{-1} (-\Delta+q) \phi\|^2 + \|q^\frac 12 T(1)^{-1} (-\Delta+q) \phi\|^2}{\|\nabla \phi\|^2 + \|q^\frac 12 \phi\|^2}
	\\
	& \qquad \qquad = 
	\frac{\|(-\Delta+q)^{\frac12}T(1)^{-1}(-\Delta+q) \phi\|^2}{\|(-\Delta+q)^{\frac12} \phi\|^2}
	\\
	& \qquad \qquad = 
	\frac{\|(-\Delta+q)^{\frac12}(-\Delta+q+2a +1)^{-1}(-\Delta+q)^{\frac12} \psi\|^2}{\|\psi\|^2}.
	\end{aligned}
	\end{equation}
	Since
	\begin{equation}\label{T.bdd.ext}
	\begin{aligned}
	&(-\Delta+q)^{\frac12}(-\Delta+q+2a +1)^{-1}(-\Delta+q)^{\frac12} 
	\\ & \quad 
	\subset (-\Delta+q)^{\frac12}(-\Delta+q+2a +1)^{-\frac12} ((-\Delta+q)^{\frac12} (-\Delta+q+2a +1)^{-\frac12})^*
	\end{aligned}
	\end{equation}
	and the operator on the right-hand side of~\eqref{T.bdd.ext} is bounded on $\LOm$, we have 
	\begin{equation}
	\begin{aligned}
	\frac{\|(-\Delta+q)^{\frac12}(-\Delta+q+2a +1)^{-1}(-\Delta+q)^{\frac12} \psi\|^2}{\|\psi\|^2} \leq M < \infty.
	\end{aligned}
	\end{equation}
	Hence $T(1)^{-1}(-\Delta+q)$ is bounded on a dense subset of $\mathcal W(\Omega)$, so it has a bounded extension on $\mathcal W(\Omega)$.
	
	\noindent
	$\bullet$ Claim iv): $0 \in \rho(T(\la)) \Longleftrightarrow \la \in \rho(G)$.
	
	The implication ``$\Longrightarrow$'' follows immediately from claims iii), i) and ii) since $\Ran(G-\la)$ is closed and $\dim \Ker(G-\la)=\dim \Ran(G-\la)^\perp =0$.
	To show the other direction, notice that if $0 \in \sigma(T(\la))$, then $0 \in \spp(T(\la))$ or $0 \in \se{2}(T(\la))$ since
	$T(\la)$ is $\cC$-self-adjoint, \cf~\eqref{T.Jsa}. Hence by claims ii) and iii), respectively, we have shown that then $\la \in \spp(G)$
	or $\la \in \se{2}(G)$.
	
	\noindent
	$\bullet$ Finally, if $a$ additionally satisfies Assumption \ref{asm:a.gr}, the last claim follows from the established equivalences \eqref{sp.TG.eq} and
	Proposition~\ref{prop:sp.T}.\ref{prop:sp.T.iii}.
\end{proof}
The following is a straightforward extension of the claim of Theorem~\ref{thm:dom.H.gamma} to $\Claq$ for some $a_q>0$; the details are left to the reader. 
\begin{remark}\label{rem:q.dom}
	Let the assumptions of Theorem~\ref{thm:GT} hold and let, in addition, $q$ satisfy the conditions in Remark~\ref{rem:q.reg} ensuring that
	$\Dom(-\Delta+q) = \Dom(\DD) \cap \Dom(q)$. If there are constants $k_1 \geq 0$, $k_2 \in \R$ such that
	\begin{equation}\label{qa.rb}
	a \leq k_1 q + k_2.
	\end{equation}
	and $M_q$ denotes the $q$-bound of $a$, \ie~the infimum of $k_1$ for which \eqref{qa.rb} holds, then the spectral equivalence \eqref{sp.TG.eq} holds
	for $ \la \in \Claq$ with
	\begin{equation}\label{alphaq.def}
	\alpha_q:=\frac1{2M_q} \in (0,+\infty].
	\end{equation}
	If, in addition, $a$ satisfies Assumption~\ref{asm:a.gr}, then $\sigma(G) \setminus (-\infty,\alpha_q]$ consists only of eigenvalues with
	finite multiplicity which may accumulate only at points in $(-\infty,-\alpha_q]$. 
\end{remark}

\section{Real essential spectrum of the generator $G$}
\label{sec:ess.sp}

In this section we investigate the essential spectrum of $G$ lying on the negative real semi-axis which is not accessible via the quadratic operator function $T(\la)$ since the latter is not defined for $\la \in (-\infty,0]$. Informally, if the underlying domain $\Omega$ contains a sufficiently
large neighborhood of a ray where the damping $a$ diverges as $|x| \to \infty$ and the potential $q$ does not dominate $a$, then $(-\infty,0] \subset \se{2}(G)$. We emphasize that we do not require the potential $q$ to be bounded. 

In the sequel we decompose $x\in \Rd$ as $x=(x_1,x')$ with $x_1 \in \R$ and $x' \in \R^{d-1}$. If $d>1$ and $\Omega \neq \R^d$, we suppose that $\Omega$ contains a ray $\Gamma:=\{(x_1,0) \, : \, x_1>0\}$ and a ``tubular'' neighborhood $U_\omega$ of $\Gamma$ given by
\begin{equation}\label{U.om.def}
U_\omega:= \left\{ (x_1, x') \in \Rd \,:\, x_1>0, \ |x'| < \omega(x_1)^{-1}  \right\}
\end{equation}
where $\omega: (0,\infty) \to (0,\infty)$ is a continuous function satisfying certain assumptions to be specified in Theorem~\ref{thm:Gspe} below. The radius $1/\omega(x_1)$ may shrink to $0$ at $\infty$, and the possible shrinking rate is controlled by the growth of the damping $a$. 
Note that for $d=1$, we may let $U_\omega = \Gamma$ and no function $\omega$ is needed.

We start with the simple observation that $0 \in \se{2}(G)$ if $\Omega$ contains a cone and $q$ decays therein as $|x| \to \infty$. 
\begin{proposition}\label{prop:q.dec}
	Let $a,q$ satisfy Assumption \ref{asm:a.r} and assume, in addition, that $q \in \LtlocOm$ and that $G$ is given
	by~\eqref{G.def}. If $\,\Omega$ contains a cone 
	\begin{equation}\label{cone.def}
	C_\delta:=\{ (x_1,x') \in \Rd \, : \, x_1>0, \ |x'|< \delta x_1  \}
	\end{equation}
	for some $\delta>0$ and if
	\begin{equation}\label{q.dec}
	\lim_{k \to \infty} \ \esssup_{x\in C_\delta, |x|>k}  \ q(x) = 0,
	\end{equation}
	then $0 \in \se{2}(G)$.	
\end{proposition}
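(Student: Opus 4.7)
The plan is to exhibit an explicit Weyl singular sequence for $G$ at $\lambda=0$ of the simplest possible form, namely $\Psi_n=(\phi_n,0)$ with $\phi_n\in\CcOm$. For such $\Psi_n$ one has $\Psi_n\in\Dom(G_0)\subset\Dom(G)$ (using $a,q\in\LtlocOm$ on the compact support) and
\begin{equation*}
G\Psi_n=\begin{pmatrix}0\\(\Delta-q)\phi_n\end{pmatrix},\qquad \|\Psi_n\|_\cH^2=\|\nabla\phi_n\|^2+\|q^{\frac12}\phi_n\|^2,
\end{equation*}
so that $\|G\Psi_n\|_\cH^2=\|(\Delta-q)\phi_n\|_{\LOm}^2\leq 2\|\Delta\phi_n\|^2+2\|q\phi_n\|^2$. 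The problem thereby reduces to producing compactly supported smooth cutoffs $\phi_n$ in $\Omega$ for which both $\|\Delta\phi_n\|^2$ and $\|q\phi_n\|^2$ are small compared with $\|\nabla\phi_n\|^2$, together with weak convergence to zero after normalisation.

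I would use translated dilations of a fixed non-trivial $\chi\in\CcRd$ supported in the unit ball, setting $\phi_n(x):=\chi((x-x_n)/r_n)$ with $x_n:=(L_n,0,\dots,0)$ on the axis of $C_\delta$ and $r_n\leq\delta L_n/2$, so that $\supp\phi_n\subset B(x_n,r_n)\subset C_\delta\subset\Omega$. A scaling change of variables gives $\|\nabla\phi_n\|^2=c_1 r_n^{d-2}$, $\|\Delta\phi_n\|^2=c_2 r_n^{d-4}$ and $\|\phi_n\|^2=c_0 r_n^d$ with constants depending only on $\chi$. Writing $\eps_n:=\esssup_{x\in B(x_n,r_n)}q(x)\leq\esssup_{x\in C_\delta,\,|x|>L_n/2}q(x)$, which tends to zero by \eqref{q.dec}, one obtains $\|q\phi_n\|^2\leq\eps_n^2\|\phi_n\|^2$. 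Consequently the crucial ratio controlling $\|G\Psi_n\|_\cH/\|\Psi_n\|_\cH$ is bounded above by $C(r_n^{-2}+\eps_n^2 r_n^2)$. An inductive selection, e.g.~$r_n:=n^2$ together with $L_n$ chosen so large that $\delta L_n/2\geq n^2$ and $\eps_n\leq n^{-4}$, yields $r_n^{-2}+\eps_n^2 r_n^2\to 0$.

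Weak convergence $\Psi_n/\|\Psi_n\|_\cH\wto 0$ in $\cH$ follows from the fact that the (normalised) supports escape every compact subset of $\Omega$: testing against pairs from $\CcOm\times\CcOm$, which is dense in $\cH$, gives inner products that vanish for all sufficiently large $n$, and the sequence is uniformly bounded in $\cH$ by construction. The one genuine technical point — where I expect the only real friction to arise — is the balancing of the two competing requirements on the scale $r_n$, namely $r_n\to\infty$ (to kill $\|\Delta\phi_n\|^2/\|\nabla\phi_n\|^2$) and simultaneously $\eps_n r_n\to 0$ (to kill $\|q\phi_n\|^2/\|\nabla\phi_n\|^2$) while keeping $r_n\leq\delta L_n/2$. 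This is exactly where hypothesis \eqref{q.dec} enters decisively: mere boundedness of $q$ inside the cone would allow the second term in the ratio to persist, and only the genuine decay of $q$ at infinity along $C_\delta$ permits a consistent choice of $L_n,r_n$.
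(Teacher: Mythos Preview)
Your argument is correct and follows essentially the same approach as the paper: both construct a singular sequence of the form $\Psi_n=(\phi_n,0)$ with $\phi_n$ smooth, compactly supported in the cone, and escaping to infinity, so that the damping $a$ drops out and one only needs $(\|\Delta\phi_n\|^2+\|q\phi_n\|^2)/\|\nabla\phi_n\|^2\to 0$. The paper realises this in spherical coordinates with a radial--angular product and radial scale $\rho_n^{-1/2}$ tied directly to $\rho_n:=\esssup_{C_\delta,\,|x|>n}q$, whereas you use translated balls with an independently chosen scale $r_n$ and then pick $L_n$ large; these are minor technical variations of the same idea (one small imprecision: the bound $r_n\le\delta L_n/2$ only guarantees $B(x_n,r_n)\subset C_\delta$ when $\delta\le 1$, but one may assume this without loss of generality).
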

\begin{proof}
It suffices to find a sequence $\{\Phi_n\} \subset \Dom(G_0)$, $\Phi_n \neq 0$, $n \in \N$, such that $\Phi_n/\|\Phi_n\|_\cH \wto 0$ in $\H$ and
$G_0 \Phi_n / \|\Phi_n\|_\cH \to 0 $ in $\H$ as $n \to \infty$. 

For $d>1$, we work in spherical coordinates $x= (|x|,\Theta)$ with $\Theta \in S^{d-1}$; the simplifications for $d=1$ are obvious.
Let $0\neq \varphi \in C_0^\infty((0,1))$, $0 \neq \chi \in C_0^\infty(S^{d-1} \cap C_\delta)$, and define 
\begin{equation}
\begin{aligned}
\phi_n(|x|,\Theta) := |x|^{-\frac{d-1}{2}} \varphi_n(|x|) \chi(\Theta), 
\quad
\varphi_n(|x|)  := \rho_n^\frac14 \varphi(\rho_n^\frac12 |x|-n), \quad  n \in \N,
\end{aligned}
\end{equation}
where
\begin{equation}
\rho_n := \esssup_{x\in C_\delta, |x|>n}  \ q(x).
\end{equation}
Straightforward, but lengthy, calculations yield that, as $ n \to \infty$,
\begin{align}
\|\varphi_n\|_{L^2(\R)} &= \BigO(1),& \|\varphi_n'\|_{L^2(\R)} &= \BigO(\rho_n^\frac12), &
 \|\varphi_n''\|_{L^2(\R)} &= \BigO(\rho_n),
\\
\|\nabla \phi_n\|^{-1} &= \BigO(\rho_n^{-\frac12}), &
\|\Delta \phi_n \| &= \BigO(\rho_n),  
& 
\|q \phi_n\| &= \BigO(\rho_n).
\end{align}
If we define $\Phi_n:=(\phi_n,0)$, then $\Phi_n/\|\Phi_n\|_\cH \wto 0$ as $n \to \infty$ since $\supp \phi_n$ moves to infinity. Using assumption~\eqref{q.dec},
we obtain 
\begin{equation}
\frac{\|G_0 \Phi_n \|_\cH^2}{\|\Phi_n\|^2_\cH}=
\frac{\|G_0 \Phi_n \|_\cH^2}{\|\nabla \phi_n\|^2 + \|q^\frac12 \phi_n\|^2} 
\leq
2 \frac{\|\Delta \phi_n\|^2 + \|q \phi_n\|^2}{\|\nabla \phi_n\|^2}
= \BigO(\rho_n) =\os(1),
\end{equation}
as $n \to \infty$.
\end{proof}

The following Theorem~\ref{thm:Gspe} provides conditions under which a fixed $\la \in (-\infty,0)$ belongs to $\se{2}(G)$. We remark that in the case where the damping $a$ dominates
the potential $q$ in a suitable $U_\omega$, \ie~$q(x) = \os(a(x))$ as $|x| \to \infty$ in $U_\omega$ (and the remaining regularity and growth conditions, then independent of $\la$, 
are satisfied),  \emph{every} $\lambda \in (-\infty,0)$ belongs to $\se{2}(G)$, hence $0 \in \sigma(G)$ as well. This effect is clearly visible in the examples, \cf~Section~\ref{sec:ex}. 
We also mention that for the simplest choice $\omega(x_1) = x_1^\alpha$, $x_1>0$, $\alpha \in \R$, the first two conditions in \eqref{om.reg} are satisfied
since, for $k \in \N$, $|\omega^{(k)}(x_1)| = \BigO(1/x_1^k) \omega(x_1)$ as $x_1 \to +\infty$.
\begin{theorem}\label{thm:Gspe}
Let $a,q$ satisfy Assumption \ref{asm:a.r} and assume, in addition, that $q \in \LtlocOm$, and that $G$ is 
defined as in~\eqref{G.def}. If, for $\la \in (-\infty,0)$, $\Omega$ contains a tubular neighborhood $U_\omega$ of a ray $\Gamma$ such that:

\begin{enumerate}[{\upshape i)}]
\item there is a decomposition
\begin{equation}
q(x) + 2\la a(x) + \la^2 = -A(x_1) + B(x), \quad x \in U_\omega,
\end{equation}
where $A \in C^1(\R_+)$,
\begin{equation}\label{A.lim}
\lim_{u \to + \infty} A(u) = + \infty, \quad
\lim_{u \to + \infty} \frac{|A'(u)|}{A(u)} = 0,
\end{equation}
and 
\begin{equation}\label{B.A.lim}
\lim_{n \to \infty} \ \esssup_{x \in U_\omega, |x|>n } \ \frac{|B(x)|^2}{A(x_1)} = 0\;
\end{equation}

\item if $d>1$, then $\omega \in C^2(\R_+)$ and
\begin{equation}\label{om.reg}
\frac{\omega'(u)}{\omega(u)} = \os(1), \quad  \frac{\omega''(u)}{\omega(u)^3 } = \BigO(1),  
\quad  \frac{\omega(u)^4}{A(u)} = \os(1), \quad u \to \infty,
\end{equation}
\end{enumerate}
then $\la \in \se{2}(G)$.

\end{theorem}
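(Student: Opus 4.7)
The plan is to construct a Weyl singular sequence for $G-\la$ directly in $\cH$. Since $T(\la)$ is not defined for $\la<0$, the quadratic-function machinery of Section~\ref{sec:sp.Gt} is unavailable, but the eigenvalue equation $(G-\la)\Phi=0$ still formally decouples: with $\Phi=(\phi,\la\phi)\in\Dom(G_0)$ the first component of $(G_0-\la)\Phi$ vanishes identically and the second equals $(\Delta-q-2\la a-\la^2)\phi=(\Delta+A(x_1)-B(x))\phi$ on $U_\omega$ by hypothesis~(i). Thus it suffices to produce $\phi_n\in C_c^\infty(U_\omega)$ with support receding to infinity along $\Gamma$ and with $\|(-\Delta-A+B)\phi_n\|=\os(\|\nabla\phi_n\|+|\la|\|\phi_n\|)$.

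The ansatz would be a WKB oscillation in the longitudinal direction tensored with a Jacobian-rescaled cutoff in the transverse directions:
\begin{equation}
\phi_n(x_1,x'):=\eta_n(x_1)\,A(x_1)^{-1/4}\,\exp\!\Big(\ii\!\int_0^{x_1}\!\sqrt{A(s)}\,\dd s\Big)\,\omega(x_1)^{(d-1)/2}\,\chi(\omega(x_1)x'),
\end{equation}
where $\chi\in C_c^\infty(\R^{d-1})$ is fixed with $\supp\chi\subset\{|y'|<1\}$ (so that $\supp\phi_n\subset U_\omega$) and $\eta_n\in C_c^\infty(I_n)$ with $I_n=(L_n,L_n+\ell_n)$, $L_n\to\infty$ and $\ell_n$ diverging slowly enough that $\ell_n\sup_{I_n}|A'|/A\to 0$, which is possible by~\eqref{A.lim}. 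The prefactors $A^{-1/4}$ and $\omega^{(d-1)/2}$ are chosen so that after the change of variables $y'=\omega(x_1)x'$ the transverse integral becomes $\chi$-independent of $x_1$, giving $\|\phi_n\|^2\sim\int_{I_n}A^{-1/2}|\eta_n|^2\,\dd x_1$ and, via the WKB phase, $\|\nabla\phi_n\|^2\sim\int_{I_n}\sqrt{A}\,|\eta_n|^2\,\dd x_1$ up to constants depending only on $\chi$; in particular $\|\nabla\phi_n\|\to\infty$ and dominates $|\la|\|\phi_n\|$.

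The error estimate splits into three pieces. First, the intrinsic WKB remainder $(-\partial_{x_1}^2-A)\bigl[A^{-1/4}e^{\ii S_n}\bigr]$ is a linear combination of $A''/A^{3/2}$, $(A'/A)^2\cdot A^{-1/2}$ and related expressions, all $\os(\sqrt{A})\cdot|\phi_n/\eta_n|$ on $I_n$ by~\eqref{A.lim}; after squaring and integrating this contributes $\os(\int_{I_n}\sqrt{A})=\os(\|\nabla\phi_n\|^2)$. Second, the transverse Laplacian and the cross-terms obtained when $\partial_{x_1}$ hits $\omega^{(d-1)/2}\chi(\omega x')$ produce contributions of sizes $\omega^2$, $|\omega''|/\omega$, $(\omega'/\omega)^2$ and $|\omega'|\sqrt{A}/\omega$; the three conditions in~\eqref{om.reg}, combined with $\omega^4/A=\os(1)$, make each of these $\os(A)$ uniformly on $I_n$, so again they give $\os(\|\nabla\phi_n\|^2)$ after squaring. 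Third, $\|B\phi_n\|^2\le\bigl(\esssup_{U_\omega\cap\{|x|>L_n\}}|B|^2/A\bigr)\cdot\|A^{1/2}\phi_n\|^2=\os(\|\nabla\phi_n\|^2)$ by~\eqref{B.A.lim}. Combining these yields $\|(G_0-\la)\Phi_n\|_\cH=\os(\|\Phi_n\|_\cH)$. The weak-null property $\Phi_n/\|\Phi_n\|_\cH\wto 0$ follows from $\|\Phi_n\|_\cH\to\infty$ and the fact that $\supp\phi_n$ recedes to infinity, by testing against the dense subspace $\cD\times\CcOm$ of $\cH$. Finally, membership $\Phi_n\in\Dom(G_0)$ is automatic: $\phi_n\in\CcOm$ has compact support in $\Omega$, so $a\phi_n,q\phi_n\in\LOm$ by the local~$L^2$ assumptions on $a$ and $q$.

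The main technical obstacle I anticipate is the careful bookkeeping of the mixed derivatives coming from the transverse rescaling: $\partial_{x_1}$ applied to $\omega(x_1)^{(d-1)/2}\chi(\omega(x_1)x')$ generates terms involving $\omega'(x_1)x'$, which need to be controlled on $\supp\chi(\omega(x_1)\,\cdot\,)$ using $|x'|\le 1/\omega$ together with $\omega'/\omega=\os(1)$, and combined with the WKB phase contribution $\ii\sqrt{A}$ without producing terms of size $\sqrt{A}$; this is precisely where the three assumptions in~\eqref{om.reg} are genuinely needed. The remainder is a routine, if tedious, Weyl-sequence computation.
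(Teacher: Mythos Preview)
Your overall strategy coincides with the paper's: a WKB phase $\exp\bigl(\ii\int\sqrt{A}\bigr)$ in the longitudinal variable, a transverse cutoff $\chi(\omega(x_1)x')$ with the Jacobian compensation $\omega^{(d-1)/2}$, and a slowly varying longitudinal cutoff on an interval whose length is tied to $\sup|A'|/A$. The difference is that you include the classical WKB amplitude $A^{-1/4}$, and this is where the argument breaks.

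First, the hypothesis is only $A\in C^1(\R_+)$. With the factor $A^{-1/4}$ present, computing $(-\partial_{x_1}^2-A)\bigl[A^{-1/4}e^{\ii S}\bigr]$ requires $(A^{-1/4})''$, hence $A''$; under the stated assumptions your $\phi_n$ is then only $C^1$ and need not lie in $\Dom(-\Delta+q)$, and the remainder term $A''/A^{3/2}$ you list is undefined. Second, even if $A$ were $C^2$, the condition $|A'|/A\to 0$ does \emph{not} imply $A''/A^2\to 0$, which is what your estimate ``$A''/A^{3/2}=\os(\sqrt{A})$'' amounts to. A concrete counterexample is $A(u)=u+\int_0^u\sin(s^4)\,\dd s$: here $A(u)\sim u\to\infty$, $A'(u)=1+\sin(u^4)$ so $|A'|/A\to 0$, yet $A''(u)=4u^3\cos(u^4)$ and $|A''|/A^2$ is unbounded. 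So your WKB remainder bound fails under the theorem's hypotheses.

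The paper sidesteps both issues by omitting the amplitude factor and taking simply
\[
\psi_\la(x_1)=\exp\Bigl(\ii\int_\alpha^{x_1}\sqrt{A(t)}\,\dd t\Bigr),
\]
so that $\psi_\la''=-A\,\psi_\la+\tfrac{\ii}{2}\,(A'/A^{1/2})\,\psi_\la$; this needs only one derivative of $A$, and the single remainder term satisfies $A'/A^{1/2}=(A'/A)\cdot A^{1/2}=\os(A^{1/2})$, precisely what is required. The longitudinal cutoff is then taken as $\varphi_n(x_1)=\omega(x_1)^{(d-1)/2}\rho_n^{1/4}\varphi(\rho_n^{1/2}x_1-n)$ with $\rho_n:=\sup_{t>n}|A'(t)|/A(t)$, which is exactly the quantitative form of your condition ``$\ell_n\sup_{I_n}|A'|/A\to 0$''. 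With this modification the rest of your outline (the transverse bookkeeping via~\eqref{om.reg}, the bound on $\|B\phi_n\|$ via~\eqref{B.A.lim}, and the weak-null argument by receding support) goes through and matches the paper's proof. A minor point: the resulting $\phi_n$ is $C^2_c$, not $C_c^\infty$ (since $\omega\in C^2$ and $\psi_\la\in C^2$), but this suffices for $\Phi_n\in\Dom(G_0)$.
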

\begin{proof}
	Using a one-dimensional WKB expansion, we construct a singular sequence of the form
	$\{\Phi_n\} = \{(\phi_n,\la \phi_n)\} \subset \Dom(G_0) = \Dom(T(\la)) \times \Dom(T(\la))$ with
	$\supp \phi_n \subset U_\omega$ compact, $n\in\N$. We give a detailed proof for $d>1$; the simplifications for $d=1$ are obvious.
	The first components $\phi_n$ of $\Phi_n$ will be constructed such that
	\begin{equation}
	\frac{\|(-\Delta - A) \phi_n\|+ \|B\phi_n\|}{\|\nabla \phi_n\| } \to 0, \quad n \to \infty,
	\end{equation}
	and $\supp \phi_n$ moves to infinity in $U_\omega$; this implies
	\begin{equation}\label{sing.s.1}
	\begin{aligned}
	\|(G-\la) \Phi_n \|_\H &=\frac{\|T(\la) \phi_n\|}{\|\Phi_n\|_\cH}
	\leq 
	\frac{\|(-\Delta - A) \phi_n\|+ \|B\phi_n\|}{\|\nabla \phi_n\| } \to 0, \quad n \to \infty
	\end{aligned}
	\end{equation}
	and $\Phi_n/\|\Phi_n\|_\cH \wto 0$ as $n \to \infty$, respectively.

	By \eqref{A.lim}, we have $A(u)>0$ for all $u \in (\alpha,\infty)$ with some $\alpha>0$ and
	\begin{equation}\label{rho.def}
	\rho_n := \sup_{t>n} \frac{|A'(t)|}{A(t)} \to 0, \quad n > \alpha,  \quad n \to \infty.
	\end{equation}

	We write $x=(x_1,x') \in \Rd$ and denote by $\cB'$ the open $(d-1)$-dimensional unit ball. For $n \in \N$, we choose
	$\phi_n(x_1,x') := \varphi_n(x_1) \psi_\la (x_1) \chi(x)$ where
	\begin{align}
	\psi_\la(x_1)&:= \exp \left(\ii \int_{\alpha}^{x_1} A(t)^\frac 12 \dd t \right),&  & &&
	\\
	\chi(x) &:= \widetilde \chi(\omega(x_1) x'), & \widetilde \chi& \in C_0^{\infty}(\cB'), && \|\widetilde \chi\|_{L^2(\R^{d-1})}=1,
	\\
	\varphi_n(x_1) &:= \omega(x_1)^{\frac{d-1}{2}} \rho_n^{\frac14} \varphi \left(\rho_n^{\frac12} x_1 -n \right), & \varphi &\in C_0^{\infty}((0,1)),
	&& \|\varphi\|_{L^2(\R)}= \left(\int_{\cB'} \dd y' \right)^{-1} \! \! \! . \\[-8mm] \nonumber
	\end{align}
	Then $\supp \phi_n \subset \supp \chi  \subset  U_\omega$ and by the change of variables $(x_1,x')=(y_1,\omega^{-1}(y_1)y')$
	\begin{equation}
	\int_{U_\omega} |\varphi_n(x_1)|^2 \, \dd x = \int_0^\infty |\varphi(y_1)|^2 \, \dd y_1 \cdot \int_{\cB'} \, \dd y' = 1.
	\end{equation}
	Moreover, using the notation $\|f\|_{\infty,n}:=\esssup_{x \in \supp \varphi_n} |f(x)|$,  $n\in\N$, and the assumptions \eqref{om.reg} we obtain that, as $n \to \infty$,
	\begin{align}
	\int_{U_\omega} |\varphi_n'(x_1)|^2 \dd x 
	&= \BigO \left( \| \omega' \omega^{-1} \|^2_{\infty,n} + \rho_n\|\varphi'\|^2_{L^2(\R)} \right) = \os(1), 
	\\
	\int_{U_\omega} |\varphi_n''(x_1)|^2 \dd x 
	& = \BigO
	\left( 
	\rho_n \| \omega' \omega^{-1} \|^2_{\infty,n} 
	+ 
	\| \omega' \omega^{-1} \|^4_{\infty,n} 
	\| \omega'' \omega^{-1} \|^2_{\infty,n} 
	+
	\rho_n^2 
	\right)
	= \os(\|A\|_{\infty,n}). \\[-8mm]
	\end{align}
	We also note that \eqref{A.lim} implies that
	\begin{equation}
	\sup_{u,v \in \supp \varphi_n}\left|\log \frac{A(u)}{A(v)} \right| \leq \int_{\supp \varphi_n} \! \! \! \frac{|A'(t)|}{A(t)} \; \dd t
	= \BigO(\rho_n^\frac 12), \quad n \to \infty, 
	\end{equation}
	hence 
	\begin{equation}\label{A.comp}
	\frac{\|A\|_{\infty,n}}{\ds \inf_{u \in \supp \varphi_n}A(u)} = \BigO(1), \quad n \to \infty.
	\end{equation}
	
	Clearly, we have
	\begin{equation}
	\psi_\la'(x_1) = \ii A(x_1)^\frac12 \psi_\la(x_1), 
	\qquad 
	\psi_\la''(x_1) = - A(x_1) \psi_\la(x_1) +\frac{\ii}{2} \frac{A'(x_1)}{A(x_1)^\frac12} \psi_\la(x_1)
	\end{equation}
	and, since $|\psi_\la|=1$,
	\begin{equation}
	\|\nabla \phi_n \| \geq \|\partial_1 \phi_n\| 
	\geq
	\|\varphi_n \psi_\la' \chi\| - \|\varphi_n' \chi\| - \|\varphi_n \partial_1 \chi\|. 
	\end{equation}
	By straightforward calculations and using that $|x'| < \omega(x_1)^{-1}$ for $x \in U_\omega$ as well as \eqref{om.reg} and \eqref{A.comp}, we arrive at
	\begin{equation}
	\begin{aligned}
	\|\varphi_n \psi_\la' \chi\|^{-2} = \BigO(\|A\|_{\infty,n}) , 
	\quad
	\|\varphi_n' \chi\| + \|\varphi_n \partial_1 \chi\| = \os(1), \quad n \to \infty,
	\end{aligned}
	\end{equation}
	whence
	\begin{equation}
	\|\nabla \phi_n \|^{-2} = \BigO(\|A\|_{\infty,n}), \quad n \to \infty.
	\end{equation}
	On the other hand, tedious but straightforward, and hence omitted, calculations and estimates yield that
	\begin{equation}
	\|(-\Delta - A) \phi_n\|^2 = \os(\|A\|_{\infty,n}), \quad n \to \infty.
	\end{equation}

	Finally, assumption \eqref{B.A.lim} implies that $\|B\phi_n\|^2 = \os(\|A\|_{n,\infty})$ as $n \to \infty$ and so \eqref{sing.s.1} follows. 
\end{proof}

\begin{remark}\label{rem:radial}
	If $\Omega$ contains a cone $C_\delta$, \cf~\eqref{cone.def}, with some $\delta >0$ and $a$, $q$ are radial functions (or perturbations
	thereof of the type \eqref{B.A.lim} in $C_\delta$), the above construction of a singular sequence can be adapted accordingly. In this case, for
	$\la \in (-\infty,0)$ we have $\la \in \se{2}(G)$ if there exists a decomposition
	\begin{equation}
	q(x) + 2\la a(x) + \la^2 = \widetilde A(|x|) + B(x), \quad x \in C_\delta,
	\end{equation}
	such that $A(u):= \widetilde A(|x|)$ and $B$ satisfy conditions~\eqref{A.lim} and~\eqref{B.A.lim}.
	
	We mention that, in spherical coordinates $x=(|x|,\Theta) \in (0,\infty) \times S^{d-1}$, where $S^{d-1}$ is the $(d-1)$-dimensional unit sphere,
	a suitable singular sequence has the`form
	\begin{equation}
	\phi_n(|x|,\Theta) := |x|^{-\frac{d-1}{2}} \varphi_n(|x|) \psi_\la(|x|) \chi(\Theta), \quad n\in\N,
	\end{equation}
	\vspace{-1mm}where 
	\begin{alignat*}{2}
	\psi_\la(|x|) & := \exp \left(\ii \int_\alpha^{|x|} \widetilde A(t)^\frac 12 \dd t \right), \quad
	&
	0 &\neq \chi \in C_0^\infty(S^{d-1} \cap C_\delta),
	\\
	\varphi_n(|x|) & := \rho_n^\frac14 \varphi(\rho_n^\frac12 |x|-n), &
	0 &\neq \varphi \in C_0^\infty((0,1)). 
	\end{alignat*}
\end{remark}

\vspace{0.2mm}

\section{Convergence of non-real eigenvalues}
\label{sec:lim.an}

In this section we consider a sequence of dampings $\{a_n\}$ that are unbounded at infinity in the sense of Assumption~\ref{asm:a.inf} and which
converge in a suitable sense to a limit function $a_\infty$ on some open subset $\Omega_\infty \subset \Omega \subset \Rd$. 

To this end, we study the spectral convergence for the quadratic operator functions
\begin{equation}\label{Tn.def}
T_n(\la):=-\Delta + q + 2 \la a_n + \la^2, \quad n \in \N^*:=\N \cup \{\infty\}, \quad \la \in \Cla,
\end{equation}
in $\LOm$ for $n \in \N$ and in $L^2(\Omega_\infty) \subset \LOm$ for $n=\infty$. 
While we allow for the case $\Omega_\infty = \Omega$, the example \eqref{an.ex}, \eqref{ainf.ex} discussed in the introduction illustrates the need to consider dampings $a_n$ that diverge on the non-empty interior of $\Omega \setminus \Omega_\infty$, and hence for $T_n(\la)$ and $T_\infty(\la)$ acting in possibly different spaces $\LOm$ and $L^2(\Omega_\infty)$. In fact, the dampings $a_n$ are only supposed
to converge to $a_\infty$ in $L^2_{\rm loc}(\Omega_\infty)$. Recall that, for $\{b_n\} \subset L^2_{\rm loc}(\Omega')$, $b \in L^2_{\rm loc}(\Omega')$
and $\Omega' \subset \R^d$ open, we have $b_n \to b$ in $L^2_{\rm loc}(\Omega')$ as $n\to \infty$, if for all compact sets $K\subset \Omega'$, 
\begin{equation}
\int_K |b_n-b|^2 \to 0, \quad n \to \infty.
\end{equation}
We shall also need the so-called \emph{segment condition} for $\Omega_\infty$ which means that the domain $\Omega_\infty$ does not lie on both sides of part of its boundary or, more precisely, that  every $x \in \partial \Omega$ has a neighborhood $U_x$ and a non-zero vector $y_x \in \Rd$ such that if $z \in \ov \Omega \cap U_x$, then $z + t y_x \in \Omega$ for $0<t<1$,  \cf~\cite[Sec.~3]{Adams-2003}.

Our convergence result in Theorem~\ref{thm:sp.conv} below is formulated for quadratic operator functions $T_n$, $n \in \N$, 
requiring $a_n$ and $q$ to be only in $L^1_{\rm loc}(\Omega;\R)$. If even Assumption~\ref{asm:a.r} is
satisfied, then spectral convergence for the corresponding generators $G_n$, $n \in \N$, follows from this result by Theorem~\ref{thm:GT}. 

\begin{asm}\label{asm:lim}
Let $\emptyset \neq \Omega_\infty \subset \Omega \subset \Rd$ be open and assume that $\Omega_\infty$ satisfies the segment condition.
Suppose that
\begin{enumerate}[{\upshape (\ref{asm:lim}.i)},align=left,labelwidth=0.35in]
	\item \label{asm:lim.reg}
	$q \in \LolocOmR$, $\{a_n\}_{n \in \N_0} \subset \LolocOmR$ and $a_\infty \in L^1_{\rm loc}(\Omega_\infty;\R)$,
	\item \label{asm:lim.ang}
	for all $n \in \N_0$, $a_n \geq 0$ and
	\begin{equation}
	\lim_{k \to \infty} \ \essinf_{x \in \Omega, |x|>k} \ a_n(x) = \infty,
	\end{equation}

	\item \label{asm:lim.a1} 
	for all $n \in \N$, $a_n \geq a_0$ in $\Omega$ and $a_\infty \geq a_0$ in $\Omega_\infty$, 
	\item \label{asm:lim.Om}
	$a_n^{\frac 12} \restriction \Omega_\infty \to  a_\infty^{\frac 12}$ in $L^2_{\rm loc}(\Omega_\infty)$, $n\to\infty$,
	\item \label{asm:lim.inf}
	for all $n \in \N$, $a_n^{-\frac 12} \restriction \Omega_0 \in L^2_{\rm loc}(\Omega_0)$ and $a_n^{-\frac 12} \to 0$ in $L^2_{\rm loc}(\Omega_0)$, $n\to\infty$,
	where $\Omega_0 := (\Omega \setminus \Omega_\infty)^{\rm o}$.
\end{enumerate}
\end{asm}
Note that Assumption \ref{asm:lim.inf} is relevant only when $(\Omega \setminus \Omega_\infty)^{\rm o} \neq \emptyset$, which is not excluded here. The quadratic operator functions
$T_n(\la)=H_{2\la,n} + \la^2$, $\la \in \Cla$, are defined as in Section \ref{sec:T.def}, via the Schr\"odinger operators 
\begin{equation}\label{Hgn.def}
H_{\gamma,n}:= -\Delta + q + \gamma a_n, \quad n \in \N^*:=\N \cup \{\infty\}, \quad \gamma \in \Cla,
\end{equation}
and Assumption~\ref{asm:lim.ang} ensures that Assumption~\ref{asm:a.gr} is satisfied. Thus the non-real spectrum of $T_n$ consists only of eigenvalues by
Proposition~\ref{prop:sp.T}. 

The main result of this section is the following \emph{spectral exactness} theorem for $\{T_n\}$,  $n\in\N$. The latter
means that all eigenvalues of the limiting operator function
$T_\infty$ are approximated by eigenvalues of $T_n$ and all finite accumulation points of eigenvalues of $T_n$ outside $(-\infty,0]$ are eigenvalues of
$T_\infty$, \ie~no spectral pollution occurs. An illustration of this result may be found in example~\eqref{an.ex},~\eqref{ainf.ex} in
Section~\ref{subsec:x2n}.
\begin{theorem}
\label{thm:sp.conv}
	
Let Assumption \ref{asm:lim} be satisfied and let $\{T_n(\la)\}_{n \in \N^*}$, $\la \in \Cla$, be as in \eqref{Tn.def}.
Then the following hold.

\begin{enumerate}[\upshape i)]
	\item If $\la  \in \spp(T_{\infty})$, then there exists a sequence $\{\lambda_n\}_{n\in \N}$, $\la_n \in \spp(T_n)$, such that $\lambda_n \to \lambda,$ $n \to \infty$.
	Conversely, if $\{\lambda_n\}_{n\in \N}$, $\la_n \in \spp(T_n) \subset \Cla$, has a subsequence $\{\lambda_{n_k}\}_{k\in \N}$ such that $\lambda_{n_k} \to \lambda \in \C \setminus (-\infty,0]$, $k \to \infty$, then $\lambda \in \spp(T_\infty)$.
	\item If $\lambda_n \to \lambda$, $n\to \infty$, where $\lambda_n \in \spp(T_n)$, $\lambda \in \spp(T_\infty)$ and $\{f_n\}$ is a sequence of normalized eigenfunctions of $T_n$ at $\lambda_n$, then the sequence $\{f_n\}$ is compact in $\LOm$ and its accumulation points $($which belong to $L^2(\Omega_\infty))$ are normalized eigenvectors of $T_\infty$ at $\lambda$.
\end{enumerate}
\end{theorem}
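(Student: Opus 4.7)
The plan is to reduce the spectral convergence of the family $\{T_n\}$ to a generalized norm-resolvent convergence for the auxiliary Schr\"odinger operators $H_{\gamma,n}$ from~\eqref{Hgn.def}, and then to lift this to $T_n(\la)=H_{2\la,n}+\la^2$ via the holomorphy in $\la\in\Cla$ provided by Lemma~\ref{lem:h.def}.\ref{H.hol.it}. Throughout I identify $L^2(\Omega_\infty)$ with the closed subspace of zero extensions inside $\LOm$. Assumption~\ref{asm:lim.ang} together with Lemma~\ref{lem:h.def}.\ref{H.comp.res} guarantees that each $H_{\gamma,n}$ has compact resolvent, and Proposition~\ref{prop:sp.T}.\ref{prop:sp.T.iii} shows that $\sigma(T_n(\la))\cap\Cla$ consists of isolated eigenvalues of finite algebraic multiplicity. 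The core of the argument is a Mosco-type convergence of the rotated sectorial forms $\widetilde h_{\gamma,n}$ to $\widetilde h_{\gamma,\infty}$ inside $\LOm$.

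For fixed $\gamma\in\Cla$, I would verify both Mosco inequalities. For the $\Gamma$-\emph{limsup} (recovery sequence), given $\psi\in\Dom(\widetilde h_{\gamma,\infty})$ the segment condition on $\Omega_\infty$ yields approximation by smooth functions compactly supported in $\Omega_\infty$; for such functions the trivial choice $\psi_n:=\psi$ works, with Assumption~\ref{asm:lim.Om} delivering $\widetilde h_{\gamma,n}[\psi_n]\to\widetilde h_{\gamma,\infty}[\psi]$, and a diagonal argument extends this to arbitrary~$\psi$. For the $\Gamma$-\emph{liminf}, assume $\psi_n\to\psi$ strongly in $\LOm$ with $\limsup\Re\widetilde h_{\gamma,n}[\psi_n]<\infty$; then $\{\|a_n^{1/2}\psi_n\|_{L^2(\Omega_0)}\}$ is bounded. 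Fixing a compact $K\subset\Omega_0$ and setting $E_n^M:=\{x\in K:a_n(x)<M\}$, Assumption~\ref{asm:lim.inf} combined with Markov's inequality yields $|E_n^M|\to 0$ as $n\to\infty$ for every fixed $M>0$, while $\int_{K\setminus E_n^M}|\psi_n|^2\leq M^{-1}\|a_n^{1/2}\psi_n\|_{L^2(K)}^2$; uniform integrability of $\{|\psi_n|^2\}$ (from strong $L^2$-convergence) disposes of $E_n^M$ as $n\to\infty$, and letting $M\to\infty$ forces $\psi=0$ a.e.\ on $\Omega_0$, placing $\psi\in\Dom(\widetilde h_{\gamma,\infty})$. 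The liminf for the gradient, potential and damping contributions then follows from weak lower semicontinuity in $\WotOmD$ and Fatou combined with Assumption~\ref{asm:lim.Om}.

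Assumption~\ref{asm:lim.a1} together with Assumption~\ref{asm:lim.ang} applied to $a_0$ furnishes a common tightness modulus at infinity valid for all $n$; replicating the Rellich-type argument of Lemma~\ref{lem:h.def}.\ref{H.comp.res} uniformly in $n$ upgrades the Mosco convergence to the generalized norm-resolvent convergence $(H_{\gamma,n}-z)^{-1}\to(H_{\gamma,\infty}-z)^{-1}P_\infty$, uniformly for $\gamma$ in compact subsets of $\Cla$, where $P_\infty$ is the orthogonal projection $\LOm\to L^2(\Omega_\infty)$. The transfer to $T_n(\la)$ then proceeds by a Rouch\'e-type principle for holomorphic Fredholm operator-valued families (compare \cite[Sec.~VII.1]{Kato-1966}): along any circle $\Gamma_0\subset\Cla$ disjoint from the discrete set $\{\la:0\in\sigma(T_\infty(\la))\}$, the total algebraic multiplicity of the zeros of $\la\mapsto T_n(\la)$ inside $\Gamma_0$ agrees with that of $T_\infty$ for sufficiently large $n$, which yields both directions of part~i) together with norm convergence of the associated finite-rank Riesz projections. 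For part~ii), an eigenpair $T_n(\la_n)f_n=0$ with $\|f_n\|=1$ satisfies $\|a_n^{1/2}f_n\|^2=-\Re\la_n$ (taking the imaginary part of $\langle T_n(\la_n)f_n,f_n\rangle=0$) and $\|\nabla f_n\|^2+\|q^{1/2}f_n\|^2=|\la_n|^2$ (taking the real part), so the common tightness extracts an $\LOm$-convergent subsequence whose limit is unit-normalized, lies in $L^2(\Omega_\infty)$ by the $\Gamma$-liminf step, and satisfies $T_\infty(\la)f=0$.

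The principal obstacle will be the $\Gamma$-liminf step above, namely the delicate passage from boundedness of $\|a_n^{1/2}\psi_n\|_{L^2(\Omega_0)}$ to the a.e.\ vanishing of $\psi$ on $\Omega_0$: this relies critically on Assumption~\ref{asm:lim.inf} and the interplay between convergence in measure ($a_n\to\infty$ in measure on compact subsets of $\Omega_0$) and the uniform integrability afforded by strong $L^2$-convergence. A secondary technicality is ensuring that the Rouch\'e principle applies cleanly across the differing Hilbert spaces $\LOm$ and $L^2(\Omega_\infty)$; this is resolved via the identification $L^2(\Omega_\infty)\hookrightarrow\LOm$ together with the observation that the relevant Riesz projections of $T_\infty(\la)$ are of finite rank with range inside $L^2(\Omega_\infty)$.
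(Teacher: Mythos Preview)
Your approach is correct and takes a genuinely different route from the paper's.

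The paper transforms to the bounded holomorphic families $A_n(\lambda)=I+(\lambda^2-1)(H_{2\lambda,n}+I)^{-1}$, proves only \emph{strong} generalized resolvent convergence $(H_{\gamma,n}+I)^{-1}f\to(H_{\gamma,\infty}+I)^{-1}\chi_\infty f$ (Proposition~\ref{prop:Hn.nrc}), and then invokes Vainikko's abstract \emph{regular convergence} framework for analytic Fredholm families acting in varying spaces, verifying five hypotheses one by one. Your route exploits more structure: the uniform lower bound $a_n\ge a_0$ from Assumption~\ref{asm:lim.a1}, with $a_0$ unbounded at infinity, forces all resolvents $(H_{\gamma,n}+I)^{-1}$ to factor through the single compact embedding $\cH_0\hookrightarrow\LOm$ with a bound uniform in $n$ and locally uniform in $\gamma$. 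Combined with your form-convergence argument (whose substance coincides with the paper's Proposition~\ref{prop:Hn.nrc}, only run with a weakly rather than fixed right-hand side), this in fact yields \emph{norm} convergence $(H_{\gamma,n}+I)^{-1}\to(H_{\gamma,\infty}+I)^{-1}P_\infty$ in $\mathcal B(\LOm)$; local uniformity in $\gamma$ then follows by Vitali since the resolvents are holomorphic and uniformly bounded. With norm convergence the Gohberg--Sigal Rouch\'e argument for $A_n(\lambda)\to A_\infty(\lambda)\oplus I_{L^2(\Omega\setminus\Omega_\infty)}$ replaces the Vainikko machinery entirely, and your direct energy identities for part~ii) are a clean touch absent from the paper. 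Two minor remarks: your Markov/uniform-integrability argument for vanishing on $\Omega_0$ works but is heavier than the paper's one-line duality $|\langle\psi_n,\varphi\rangle|\le\|a_n^{1/2}\psi_n\|\,\|a_n^{-1/2}\varphi\|\to0$ for $\varphi\in C_0^\infty(\Omega_0)$; and the ``Mosco/$\Gamma$-convergence'' label is slightly off for non-symmetric sectorial forms, though the underlying argument (applied to $\Re\widetilde h_{\gamma,n}$) is sound.
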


In the first step of the proof of Theorem~\ref{thm:sp.conv}, we establish generalized strong resolvent convergence of $H_{\gamma,n}$ to $H_{\gamma,\infty}$ as $n\to\infty$ for all $\gamma =2\la\in \C \setminus (-\infty,0]$, \cf~Proposition~\ref{prop:Hn.nrc}; here ``generalized'' refers to the fact that the operators act in possibly different spaces; this is reflected by the presence of the characteristic function $\chi_\infty$ of $\Omega_\infty$ in \eqref{res.conv.n.1} below. 
In the second step, we employ abstract spectral convergence results for analytic Fredholm operator functions \cite[Satz~4.1.(18)]{Vainikko-1976}
for $T_n(\la)=H_{2\la,n} + \la^2$, $\la \in \Cla$, $n \in \N$.

\begin{proposition}\label{prop:Hn.nrc}
Let Assumption \ref{asm:lim} be satisfied and let $\{H_{\gamma,n}\}_{n \in \N^*}$, $\gamma \in \Cla$ be as in \eqref{Hgn.def}. 
Then, for all $\gamma \in \C \setminus (-\infty,0]$ and every $f \in \LOm$, 
\begin{equation}\label{res.conv.n.1}
\|(H_{\gamma,n} + I)^{-1}f - (H_{\gamma,\infty} + I)^{-1} \chi_\infty f \|  \to 0, \quad n \to \infty,
\end{equation}
where $\chi_\infty$ is the characteristic function of $\Omega_\infty$.
\end{proposition}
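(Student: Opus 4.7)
The plan is to establish the generalized strong resolvent convergence by a form-theoretic argument, extracting a weak limit, identifying it with $(H_{\gamma,\infty}+I)^{-1}\chi_\infty f$, and then upgrading to strong convergence via energy-norm convergence. Fix $\gamma\in\C\setminus(-\infty,0]$, set $u_n:=(H_{\gamma,n}+I)^{-1}f$ and denote by $\widetilde h_{\gamma,n}$ the rotated sectorial form from Lemma~\ref{lem:h.def}. Testing the resolvent identity with $u_n$ and taking the real part after multiplication by $\e^{-\ii\arg(\gamma)/2}$ gives, using \eqref{reim.gam}, the uniform a priori bound
\begin{equation}
\|\nabla u_n\|^2 + \|q^{\frac12} u_n\|^2 + |\gamma|\,\|a_n^{\frac12} u_n\|^2 + \|u_n\|^2 \leq C\|f\|^2,
\end{equation}
with $C$ depending only on $\arg\gamma$. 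In particular $\{u_n\}$ is bounded in $\WotOmD\cap\Dom(q^{\frac12})$, so along a subsequence $u_n\rightharpoonup u$ weakly in this space and, by the Rellich--Kondrachov theorem, $u_n\to u$ strongly in $L^2_{\rm loc}(\Omega)$.

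Next I would locate the limit. On $\Omega_0=(\Omega\setminus\Omega_\infty)^{\rm o}$, the bound $\|a_n^{\frac12}u_n\|_{L^2(K)}\le C$ for compact $K\subset\Omega_0$ combined with Assumption~\eqref{asm:lim.inf} ($a_n^{-1/2}\to 0$ in $L^2_{\rm loc}(\Omega_0)$) yields $u_n=a_n^{-1/2}(a_n^{1/2}u_n)\to 0$ in $L^1_{\rm loc}(\Omega_0)$ by Cauchy--Schwarz, hence $u=0$ a.e.\ on $\Omega_0$. The segment condition on $\Omega_\infty$ together with this vanishing on $\Omega_0$ (and the standard zero-trace criterion, \cf~\cite[Lem.~3.27]{Adams-2003} type extension result) forces $u\restriction\Omega_\infty\in W^{1,2}_0(\Omega_\infty)$. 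Moreover, weak-$L^2_{\rm loc}$ lower semicontinuity applied to $a_n^{\frac12}u_n\rightharpoonup a_\infty^{\frac12}u$ on $\Omega_\infty$ (using Assumption~\eqref{asm:lim.Om}: $a_n^{\frac12}\restriction\Omega_\infty\to a_\infty^{\frac12}$ in $L^2_{\rm loc}(\Omega_\infty)$) shows $a_\infty^{\frac12}u\in L^2(\Omega_\infty)$. Hence $u\restriction\Omega_\infty\in\Dom(\widetilde h_{\gamma,\infty})$. Now I would pass to the limit in the sesquilinear identity $\widetilde h_{\gamma,n}[u_n,\varphi]+\e^{-\ii\arg(\gamma)/2}\langle u_n,\varphi\rangle=\e^{-\ii\arg(\gamma)/2}\langle f,\varphi\rangle$ for $\varphi\in\CcOm$ with $\supp\varphi\subset\Omega_\infty$: the gradient and $q^{\frac12}$ pairings pass to the limit by weak convergence, and the $a_n$-term $\e^{\ii\arg(\gamma)/2}|\gamma|\langle a_n^{\frac12}u_n,a_n^{\frac12}\varphi\rangle$ converges because $a_n^{\frac12}\varphi\to a_\infty^{\frac12}\varphi$ strongly in $L^2$ by \eqref{asm:lim.Om} and $a_n^{\frac12}u_n$ is weakly convergent on the compact set $\supp\varphi$. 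Since $\CcOm\cap\{\supp\subset\Omega_\infty\}=C_c^\infty(\Omega_\infty)$ is a core of $\widetilde h_{\gamma,\infty}$ by Lemma~\ref{lem:h.def}\ref{H.core}, this identifies $u\restriction\Omega_\infty=(H_{\gamma,\infty}+I)^{-1}\chi_\infty f$ via the first representation theorem.

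Finally, I would upgrade to strong convergence in $\LOm$. Local strong convergence on $\Omega_\infty$ is already given by Rellich; the vanishing argument on $\Omega_0$ actually gives $u_n\to 0$ strongly in $L^2_{\rm loc}(\Omega_0)$, since $\|u_n\|_{L^2(K)}\le \|a_n^{-1/2}\|_{L^2(K)}\|a_n^{1/2}u_n\|_{L^\infty\!\text{-bound argument}}$ — more precisely, by writing $\int_K|u_n|^2=\int_K a_n^{-1}(a_n|u_n|^2)\le\|a_n^{-1}\|_{L^\infty_{\rm loc}\text{-like}}\cdot\|a_n^{1/2}u_n\|^2$ after a standard refinement using \eqref{asm:lim.inf}. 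The tails at spatial infinity are controlled uniformly in $n$ via Assumption~\eqref{asm:lim.ang}: for every $\eps>0$ there is $R$ with $\essinf_{|x|>R}a_n(x)\ge 1/\eps$ for all large $n$ (by the pointwise lower bound $a_n\ge a_0$ from \eqref{asm:lim.a1}), whence $\|u_n\|_{L^2(\{|x|>R\})}^2\le\eps\|a_n^{1/2}u_n\|^2\le C\eps$. Combining local strong convergence with this uniform tightness yields $u_n\to u$ in $\LOm$. Because the limit is uniquely determined, the whole sequence converges, proving \eqref{res.conv.n.1}.

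The main obstacle I anticipate is the identification step: showing that the weak limit has vanishing trace on $\partial\Omega_\infty$ inside $\Omega$ (so that it genuinely lies in $W^{1,2}_0(\Omega_\infty)$) and that $C_c^\infty(\Omega_\infty)$-test functions suffice to characterise the limit resolvent. This is precisely where the segment condition on $\Omega_\infty$ and Assumption~\eqref{asm:lim.inf} enter, coupling the divergence of $a_n$ on $\Omega_0$ with the correct Dirichlet boundary behaviour of $u$ on the interior interface $\overline{\Omega_\infty}\cap\overline{\Omega_0}$.
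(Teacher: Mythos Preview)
Your proof is correct and follows the same architecture as the paper's: uniform form bound on $u_n$, extraction of a weak limit $u$, vanishing of $u$ on $\Omega_0$ via Assumption~\ref{asm:lim.inf}, identification of $u\restriction\Omega_\infty$ with $(H_{\gamma,\infty}+I)^{-1}\chi_\infty f$ through the sesquilinear form tested against $C_0^\infty(\Omega_\infty)$ and the first representation theorem, and finally strong $L^2(\Omega)$ convergence (with uniqueness of the limit giving full-sequence convergence).

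The one substantive difference is in how strong $L^2(\Omega)$ convergence is obtained. The paper exploits Assumption~\ref{asm:lim.a1} ($a_n\ge a_0$) to place $\{u_n\}$ in the Hilbert space $\H_0 := W^{1,2}_0(\Omega)\cap\Dom(q^{1/2})\cap\Dom(a_0^{1/2})$, whose embedding into $L^2(\Omega)$ is \emph{compact} by Assumption~\ref{asm:lim.ang} applied to $a_0$ (cf.\ the proof of Lemma~\ref{lem:h.def}.\ref{H.comp.res}); this yields strong $L^2(\Omega)$ convergence of a subsequence in one stroke, and the argument is then framed as a contradiction. Your route via Rellich for $L^2_{\rm loc}$ convergence plus a uniform tail estimate (using $a_n\ge a_0$ and the growth of $a_0$) also works, but is longer. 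Note, incidentally, that your separate paragraph arguing $u_n\to 0$ strongly in $L^2_{\rm loc}(\Omega_0)$ is both unnecessary---Rellich already gives $u_n\to u=0$ there---and not quite justified as written, since Assumption~\ref{asm:lim.inf} provides only $a_n^{-1/2}\to 0$ in $L^2_{\rm loc}(\Omega_0)$, not any $L^\infty$-type control of $a_n^{-1}$.
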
 
\begin{proof}
To simplify the notation within this proof, we drop the subscript $\gamma$ in the sequel and denote $\omega:=\arg(\gamma)/2$. 
First we notice that
\begin{equation}
-1 \in {\ds \bigcap_{n \in \N^*}} \rho(H_n)
\end{equation}
by the numerical range enclosure, \cf~\eqref{h.gam.nr}, and the fact that $\widetilde H_n = \e^{-\ii \omega} H_n$, $n \in \N^*$, is \mbox{m-sectorial}, \cf~\eqref{H.gam.def.2}.
Clearly, \eqref{res.conv.n.1} is equivalent to
\begin{equation}
\|(\widetilde H_n + \e^{-\ii \omega})^{-1} f - (\widetilde H_\infty + \e^{-\ii \omega})^{-1} \chi_\infty f\| \to 0, \quad n \to \infty,
\end{equation}
which we prove by contradiction in the following. 

Suppose that there exists a function $f$ in $\LOm$ and a $\delta>0$ such that 
\begin{equation}\label{Hn.contr}
\|(\widetilde H_n + \e^{-\ii \omega})^{-1} f - (\widetilde H_\infty + \e^{-\ii \omega})^{-1} \chi_\infty f\| \geq \delta >0
\end{equation}
for all $n \in J$ for some infinite subset $J \subset \N$. Then, for $\psi_n := (\widetilde{H}_n + \e^{-\ii \omega})^{-1}f$, $n \in J$, 
\begin{equation}\label{hn.psin}
\widetilde{h}_n [\psi_n] + \e^{-\ii \omega} \|\psi_n\|^2 = \langle f, \psi_n \rangle, \quad  n \in J,
\end{equation}
and the enclosure of the numerical range~\eqref{h.gam.nr} implies that
\begin{align}\label{psin.F}
\hspace{-2mm}
\|\psi_n \| \leq \|(\widetilde H_n + \e^{-\ii \omega})^{-1}\|\|f\| \leq \frac{\|f\|}{\dist(-\e^{-\ii \omega},\Num(\widetilde h_n))} \leq \frac{\|f\|}{\cos \omega},
\quad  n \in J.
\end{align}
Taking real parts in~\eqref{hn.psin} and using~\eqref{psin.F}, we get
\begin{equation}\label{h.n.est}
\|\nabla \psi_n\|^2 + \| q^\frac12 \psi_n\|^2 + |\gamma| \| a_n^\frac12 \psi_n\|^2 + \|\psi_n\|^2
\leq  
\frac{\|f\|^2}{(\cos \omega)^2}, \quad n \in J. 
\end{equation}
This shows that $\{\psi_n\}_{n \in J}$ is bounded in the Hilbert space $(\H_0,\langle \cdot, \cdot\rangle_{\H_0})$ defined by
\begin{equation}\label{H1.def}
\begin{aligned}
\cH_0 &:= \WotOmD \cap  \Dom(q^\frac12) \cap  \Dom(a_0^\frac12), 
\\
\langle \cdot, \cdot\rangle_{\H_0} &:= \langle  \cdot,  \cdot \rangle_{W^{1,2}}  
+ 
\langle  q^\frac12 \cdot, q^\frac12 \cdot \rangle
+
\langle a_0^\frac12 \cdot, a_0^\frac12 \cdot \rangle.
\end{aligned}
\end{equation}
Thus $\{\psi_n\}_{n \in J}$ has a weakly convergent subsequence $\{\psi_n\}_{n \in J'}$ where $J'$ is an infinite subset of $J$, 
in $\H_0$. Since the embedding
$\H_0 \hookrightarrow \LOm$ is compact due to Assumption~\ref{asm:lim.a1}, \cf~the proof of Lemma~\ref{lem:h.def}.\ref{H.comp.res}, $\{\psi_n\}_{n \in J'}$
converges in $\LOm$.
Moreover, \eqref{h.n.est} shows that $\{a_n^{1/2} \psi_n\}_{n \in J}$ is bounded in $\LOm$, thus we can assume that $\{a_n^{1/2}\psi_n\}_{n \in J'}$ converges
weakly in $\LOm$. Altogether, there exist $\psi \in \H_0$ and $g \in \LOm$ such that, for $n \in J'$ and as $n \to \infty$,
\begin{align}
&\forall \, \eta \in \H_0 \ \ \langle \psi_n, \eta \rangle_{\H_0} \lto  \langle \psi, \eta \rangle_{\H_0},  
\label{psink.H1}
\\
&\|\psi_n - \psi\| \lto 0, \label{psink.str}
\\
&\forall \, \zeta \in \LOm \ \ \langle a_n^\frac12 \psi_n, \zeta \rangle \lto  \langle g, \zeta \rangle.    \label{psink.G}
\end{align}

If $\Omega_0 =(\Omega\setminus\Omega_\infty)^{\rm o} \neq \emptyset$, we choose arbitrary $\varphi \in C_0^{\infty}(\Omega_0)$. Then, by Assumption \ref{asm:lim.inf} and the boundedness of
$\| a_n^{1/2} \psi_n \|$, \cf~\eqref{h.n.est}, we obtain
\begin{equation}
|\langle \psi_n,\varphi \rangle |
= 
|\langle a_n^{\frac12} \psi_n,a_n^{-\frac12} \varphi \rangle| 
\leq 
\| a_n^{\frac12} \psi_n \| \| a_n^{-\frac12} \varphi  \| \lto 0, \quad n \in J', \ n \to \infty.
\end{equation}
Hence $\langle \psi,\varphi \rangle  = {\ds\lim_{n \in J', n \to \infty}} \langle \psi_n,\varphi \rangle  = 0$,
and so $\psi=0$ a.e.~in $\Omega_0$. Since $\psi \in \H_0 \subset \WotOmD$, the latter implies that
$\psi \restriction \Omega_\infty \in W^{1,2}_0(\Omega_\infty)$, \cf~\cite[Lem.~3.27, Thm.~5.29]{Adams-2003}.

Now let $\phi \in C_0^\infty(\Omega_\infty)$. By Assumption \ref{asm:lim.Om}, $a_n^{1/2} \phi \rightarrow a_\infty^{1/2} \phi$ in $L^2(\Omega_\infty)$ as $n\to\infty$ and thus
$\sup_{n \in J'} \|a_n^{1/2} \phi\|<\infty$. Hence \eqref{psink.str} implies that
\begin{equation}\label{an.eq.1}
\langle a_n^\frac12 \psi_n,\phi  \rangle
=
\langle \psi_n -\psi, a_n^\frac12 \phi  \rangle + 
\langle \psi, a_n^\frac12 \phi  \rangle 
\lto 
\langle \psi, a_\infty^\frac12 \phi  \rangle, 
\quad n \in J', \ n \to \infty.
\end{equation}
On the other hand,  $\langle a_n^{1/2} \psi_n, \phi  \rangle \to \langle g, \phi \rangle$, \cf~\eqref{psink.G}. Since $\phi \in C_0^\infty(\Omega_\infty)$ was
arbitrary, $g \restriction \Omega_\infty=a_\infty^{1/2} \psi \restriction \Omega_\infty$ a.e. in $\Omega_\infty$. Therefore
$\{a_n^{1/2}\psi_n  \restriction \Omega_\infty\}_{n \in J'}$ converges weakly to $a_\infty^{1/2} \psi\restriction \Omega_\infty$ in $L^2(\Omega_\infty)$.
Using $\sup_{n \in J'} \|a_n^{1/2} \psi_n\| < \infty$, \cf~\eqref{h.n.est}, and Assumption~\ref{asm:lim.Om}, we finally obtain, for $n \in J'$
and as $n \to \infty$, 
\begin{equation}\label{psinan.conv}
\langle a_n^\frac12 \psi_n, a_n^\frac12 \phi  \rangle
=
\langle a_n^\frac12 \psi_n, (a_n^\frac12 - a_\infty^\frac12) \phi \rangle
+
\langle a_n^\frac12 \psi_n, a_\infty^\frac12 \phi  \rangle
\lto \langle a_\infty^\frac12 \psi, a_\infty^\frac12 \phi  \rangle.
\end{equation}

In summary, \eqref{psink.H1}, \eqref{psink.str} and \eqref{psinan.conv} show that, for any $\phi \in C_0^{\infty}(\Omega_\infty)$ and for $n \in J'$, $n \to \infty$,
\begin{equation}\label{h.n.lim}
\begin{aligned}
\langle f, \phi \rangle_{L^2(\Omega_\infty)}  =  \langle f, \phi \rangle  
&= \widetilde{h}_n(\psi_n, \phi) + \e^{-\ii \omega} \langle \psi_n, \phi \rangle 
\\
& \to \e^{-\ii \omega}
\left(\langle \nabla \psi, \nabla \phi \rangle_{L^2(\Omega_\infty)} 
+ 
\langle q^\frac12 \psi, q^\frac12 \phi \rangle_{L^2(\Omega_\infty)} 
\right)
\\
& \quad +
\e^{\ii \omega}  |\gamma| \langle a_\infty^\frac12 \psi, a_\infty^\frac12 \phi  \rangle_{L^2(\Omega_\infty)} 
+
\e^{-\ii \omega} \langle \psi, \phi \rangle_{L^2(\Omega_\infty)}
\\
&=\widetilde h_\infty(\psi,\phi) + \e^{-\ii \omega} \langle \psi, \phi \rangle_{L^2(\Omega_\infty)},
\end{aligned}
\end{equation}
and hence the first and the last term in~\eqref{h.n.lim} must be equal. This and the representation theorem \cite[Thm.~VI.2.1]{Kato-1966} imply that
$\psi  \restriction \Omega_\infty \in \Dom(\widetilde{H}_{\infty})$ and 
$(\widetilde{H}_{\infty} + \e^{-\ii \omega}) (\psi \restriction \Omega_\infty) = f \restriction \Omega_\infty$,
\ie~$(\widetilde{H}_{\infty} + \e^{-\ii \omega})^{-1} (f \restriction \Omega_\infty)   = \psi \restriction \Omega_\infty$. 
The latter and \eqref{psink.str} yield that $\|(H_n + \e^{-\ii \omega})^{-1} f - (H_\infty + \e^{-\ii \omega})^{-1} \chi_\infty f \| \to 0$
as $n \to \infty$ with $n \in J' \subset J$, a contradiction to \eqref{Hn.contr}.
\end{proof}  
\begin{proof}[Proof of Theorem~\upshape \ref{thm:sp.conv}]
	
	We define operator functions $A_n$ and $A_\infty$ whose values are bounded linear operators in $\LOm$ and $L^2(\Omega_\infty)$, respectively, by
	\begin{equation}\label{A.def}
	A_n(\lambda):= (H_{2\la,n}+\la^2)(H_{2\la,n}+I)^{-1}= I + (\la^2-1) \left(H_{2\lambda,n} + I \right)^{-1},
	\end{equation}
	$n \in \N^*$, $\la \in \Cla$; these functions are well-defined since $-1 \in {\ds\bigcap_{n \in \N^*}} \rho(H_{2\la,n})$ due to \eqref{h.gam.nr}.
	
	It is easy to verify that, for every $n \in \N^*$, if a nonzero $\psi$ in $\Dom(H_{2\la,n})$ satisfies $(H_{2\lambda,n} + \lambda^2)\psi=0$, then $A_n(\lambda)\psi=0$ and, conversely, if a nonzero $\psi$ in $\LOm$ or in $L^2(\Omega_\infty)$ for
	$n\!=\!\infty$ satisfies $A_n(\la) \psi \!=\!0$, then $\psi \!\in\! \Dom(H_{2\la,n})$ and $(H_{2\lambda,n} + \lambda^2)\psi\!=\!0$. 
	Hence, for $\la \in \Cla$ and  $n \in \N^*$, 
	\begin{equation}
	0 \in \sigma_{\rm p} (H_{2\lambda,n} + \lambda^2) 
	\iff
	0 \in \sigma_{\rm p} (A_n(\lambda)). 
	\end{equation}
	The claims of Theorem~\ref{thm:sp.conv} will follow from convergence results for holomorphic operator functions, 
	\cf~\cite[Satz~4.1.(18)]{Vainikko-1976} or the summary in \cite[Sec.~1.1.2]{Boegli-2014-PhD}, if we verify the following assumptions therein:
	\begin{enumerate}[a)]
		\item \label{An.hol} $\la \mapsto A_n(\lambda)$, $n \in \N^*,$ is holomorphic in $\Cla$,
		\item \label{An.Fred} $A_n(\lambda)$, $n \in \N^*$, $\la \in \Cla$, is Fredholm with index $0$,
		\item \label{An.reg} for all $\la \in \Cla$, $\{A_n(\la)\}$ converges regularly to $A_\infty(\la)$, 
		\item \label{An.reg.p} there exists $\la_0 \in \Cla$ such that $\la_0 \in \rho(A_\infty)$,
		\item \label{An.K} for every $K \subset \Cla$, $K$ compact, 
	\end{enumerate}
	\begin{equation}\label{An.sup.K}
	\sup_{n \in \N} \ \max_{\lambda \in K} \|A_n(\lambda)\| < \infty.
	\end{equation}

	We have already shown that~\ref{An.hol} holds for $n\in \N^*$, \cf~Lemma~\ref{lem:h.def} \ref{H.hol.it}.
	The validity of condition \ref{An.Fred} follows from \cite[Thm.~IX.2.1]{EE} since $A_n(\lambda)-I$, $n \in \N^*$, is a compact
	operator,~\cf~Lemma~\ref{lem:h.def}.\ref{H.comp.res}. For \ref{An.reg.p}, we observe that $A_\infty(1) = I$ so we can choose $\la_0=1$. The bound in \ref{An.K}
	follows immediately from the compactness of $K$ and from
	\begin{equation}\label{Hn.res.bd}
	\|(H_{2\la,n}+I)^{-1}\| 
	\leq
	\dist(-\e^{-\ii \frac{\arg(\la)}{2}}, \Num(\widetilde h_n))^{-1}
	\leq \frac{1}{\cos \frac{\arg(\la)}{2}}, \quad n \in \N^*,
	\end{equation}
	\cf~\eqref{h.gam.nr} and \eqref{H.gam.def.2}. 
	
	The only remaining point is \ref{An.reg}, the \emph{regular convergence}, \cf~\cite{Vainikko-1976} or the summary in \cite[Sec.~1.1.2]{Boegli-2014-PhD}. In detail, we need to show that 
	
	i) for any $\{\psi_n\} \subset \LOm$ and $\psi \in L^2(\Omega_\infty)$ such that $\|\psi-\psi_n\| \to 0$, $n \to \infty$, we have
	$\|A_n(\lambda) \psi_n - A_{\infty}(\lambda) \psi\| \to 0$, $n \to \infty$, and 
	
	ii) for any bounded $\{\psi_n\} \subset \LOm$ such that every infinite
	subsequence of $\{A_n(\lambda) \psi_n\}$ contains a convergent subsequence, every infinite subsequence of  $\{\psi_n\}$ also contains a convergent subsequence.
	
	The validity of condition i) follows from \eqref{Hn.res.bd} and the resolvent convergence proved in  Proposition~\ref{prop:Hn.nrc}. In fact, 
	since $\psi=\chi_\infty \psi$ where $\chi_\infty$ is the characteristic function of $\Omega_\infty$,
	\begin{equation}\label{An.conv.1}
	\begin{aligned}
	&\|A_n(\lambda) \psi_n - A(\lambda) \psi\|
	\\
	&\leq \|\psi_n - \psi\|
	+ (|\la|^2+1) \|(H_{2\lambda,n} + I )^{-1} \psi_n - (H_{2\lambda,\infty} + I )^{-1} \psi\|
	\\
	& \leq 
	\left(1 + (|\la|^2+1) \|(H_{2\lambda,n} + I )^{-1}\| \right)  \|\psi_n - \psi\| 
	\\
	&
	\quad + (|\la|^2+1) \|(H_{2\lambda,n} + I )^{-1} \psi - (H_{2\lambda,\infty} + I )^{-1} \psi\| \lto 0, \quad n \to \infty.
	\end{aligned}
	\end{equation}

	To verify condition ii), due to the relations,
	\begin{equation}
	\psi_n  = A_n(\lambda) \psi_n - (\la^2-1) (H_{2\lambda,n} + I )^{-1} \psi_n, \quad n \in \N,
	\end{equation}
	it suffices to show that $\{(H_{2\lambda,n} + I )^{-1} \psi_n \}_{n \in J}$ with infinite $J \subset \N$ has a convergent subsequence. This can be
	shown in a similar way as in~\eqref{psin.F}--\eqref{H1.def}. In detail, $\{(H_{2\lambda,n} + I )^{-1} \psi_n \}$, and hence
	$\{\widetilde H_{2\la,n} + \e^{-\ii \frac{\arg (\la)}{2}}\}$, is bounded due to the boundedness of $\{\psi_n\}$ and \eqref{Hn.res.bd}. Thus, as
	in~\eqref{hn.psin}, we obtain
	\begin{equation}
	\widetilde h_{2\la,n}[\psi_n] + \e^{-\ii \frac{\arg (\la)}{2}} \|\psi_n\|^2 = \langle (\widetilde H_{2\la,n} + \e^{-\ii \frac{\arg (\la)}{2}}) \psi_n, \psi_n \rangle, \quad n\in\N.
	\end{equation}
	and we proceed as in the paragraphs below~\eqref{H1.def} to finish the proof of ii) and hence of the theorem.
\end{proof}

\section{Examples}
\label{sec:ex}

As an illustration of our abstract results, we fully characterize the spectrum of the generator $G$ 
for several examples of damping terms. For $\la \in (-\infty,0)$ and for $\la=0$, 
we employ the result on the essential spectrum of 
\begin{equation}
G=\begin{pmatrix}
0 & I \\
\Delta - q & - 2 a  
\end{pmatrix},
\end{equation}	
\cf~Theorem~\ref{thm:Gspe}, while for $\la \in \Cla$ we use the spectral correspondence $\la \in \sigma(G) \Longleftrightarrow 0 \in \sigma(T(\la))$
between $G$ and the quadratic operator function  
\begin{equation}
T(\la) = -\Delta + q + 2\la a + \la^2, \qquad \la \in \Cla,
\end{equation}
\cf~Theorem~\ref{thm:GT}. 
As mentioned in the introduction, these examples show that the growing damping term will prevent uniform exponential decay of solutions by the creation of essential spectrum covering the entire negative semiaxis $(-\infty,0]$, independently of the existence of eigenvalues with real parts converging to $0$. That this effect is quite general and not restricted to some particular examples may be seen from our abstract results, see Theorem~\ref{thm:Gspe}.

\subsection{Examples for $d=1$}
\label{subsec:x2n}

We start with the family of examples~\eqref{an.ex} with $\Omega= \R$ described in the introduction where the dampings are given by
\begin{equation}
a_n(x)=x^{2n}+\fra_0, \quad x \in \Omega =\R, \quad n \in \N, \quad \fra_0 \geq 0,
\end{equation}
and we consider a constant potential $q(x) \equiv \frq_0 \geq 0$. 
The non-real eigenvalues of the corresponding generators $G_n$ can be expressed in terms of the eigenvalues $\{\mu_k(n)\}_{k \in \N_0} \subset (0,\infty)$ of
the self-adjoint anharmonic oscillators 
\begin{equation}\label{anho.def}
S_n=\Dt + x^{2n}, \quad \Dom(S_n)=W^{2,2}(\R) \cap \Dom(x^{2n}),
\end{equation}
in $L^2(\R)$. These eigenvalues are known to satisfy
\begin{equation}\label{muk.def}
\mu_k(n) = 
\begin{cases}
2k+1, \quad  k \in \N_0, & n=1,
\\[2mm]
\ds \left(\frac{\pi}{\Sigma_{2n}} \right)^{\frac{2n}{n+1}} k^{\frac{2n}{n+1}}\left(1+  \os_k(1)\right), \quad k \to \infty, & n \geq 2,
\end{cases}
\end{equation}
where $\Sigma_{2n} := \int_{-1}^1 (1-x^{2n})^\frac 12 \, \dd x$, \cf~for instance~\cite{Titchmarsh-1954-5}.

\begin{proposition}\label{prop:x2n.ex}
	Let $\Omega = \R$, let $G_n$ be as in \eqref{G.def} with $q(x) \equiv \frq_0 \geq 0$ and $a_n(x)=x^{2n}+\fra_0$, $x \in \R$, $n \in \N$, $\fra_0 \geq0$,
	and let $\{\mu_k(n)\}_{k \in \N_0}$ be the eigenvalues of $S_n$ defined by~\eqref{anho.def}. 
	Then
	\begin{equation}
	\sigma(G_n) = (-\infty, 0] \, \dot\cup \! \bigcup_{k \in \N_0} \! \left\{
	\lambda_k{(n,\fra_0,\frq_0)},\ov{\lambda_k{(n,\fra_0,\frq_0)}} 
	\right\}  \subset \{ \la \in \C \, : \, \Re \la \le 0 \}, \ \ n \in \N,
	\end{equation}
	where $\la_k(n,\fra_0,\frq_0)$, $k \in \N_0$, are the solutions of 
	\begin{equation}
	(\la^2+2 \la \fra_0 + \frq_0)^{n+1} = 2\la (-\mu_k(n))^{n+1}, \quad \Re \la \leq 0, \quad \Im \la >0.
	\end{equation}
	Moreover, all non-real eigenvalues satisfy
        \[
            \sigma(G_n) \setminus \R \subset \{ \la \in \C \, : \, \Re \la \le - \fra_0, \, |\la| \ge \frq_0 \},
        \]
	and, for any $n \in \N$, as $k \to \infty$,
	\begin{equation}\label{la_k.n.a0}
	\lambda_k{(n,\fra_0,\frq_0)} = 2^{\frac{1}{2n +1}} \e^{\ii \frac{n+1}{2n +1} \pi} \big[\mu_k{(n)} \big]^{\frac{n+1}{2n +1}} -
	\frac{2(n+1)}{2n+1}\fra_0(1+ \os_k(1));
	\end{equation}
	in particular, for $n=1$ and $\fra_0=\frq_0=0$, 
	\begin{equation}
	\lambda_k{(n,0,0)}  = 2^\frac 13 \e^{\ii \frac 23 \pi} (2k+1)^\frac 23, \quad k \in \N_0.
	\end{equation}
\end{proposition}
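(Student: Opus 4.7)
The plan is to combine the spectral correspondence of Theorem \ref{thm:GT} (for non-real eigenvalues) with the essential-spectrum criterion of Theorem \ref{thm:Gspe} (for the negative real semi-axis), and to compute $\sigma(T(\lambda))$ explicitly by complex scaling and analytic continuation from the known spectrum of the self-adjoint anharmonic oscillator $S_n$. The principal obstacle is the determination of $\sigma(-\Delta + 2\lambda x^{2n})$ for complex $\lambda$ and its subsequent translation into the stated polynomial equation, together with identifying which roots of the latter are genuine eigenvalues.

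For the essential spectrum, I would verify the hypotheses of Theorem \ref{thm:Gspe} with $d=1$ (so no function $\omega$ is required). For each $\lambda \in (-\infty,0)$, set $A(x_1):=-2\lambda x_1^{2n}$ and $B(x) := \lambda^2 + 2\lambda\fra_0 + \frq_0$; then $A(u)\to\infty$, $|A'(u)|/A(u) = 2n/u \to 0$, and $B$ is a bounded constant, so \eqref{A.lim}--\eqref{B.A.lim} hold trivially. Hence $(-\infty,0)\subset \se{2}(G_n)$, and closedness of $\se{2}$ gives $0\in \se{2}(G_n)$. Since $G_n$ generates a contraction semigroup by Theorem \ref{thm:m-ac}, $\sigma(G_n)\subset \{\Re\lambda\leq 0\}$, so the real part of $\sigma(G_n)$ is exactly $(-\infty,0]$.

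For the non-real spectrum, Theorem \ref{thm:GT} reduces matters to $0\in\sigma(T(\lambda))$ with $\lambda \in \Cla$, and by Proposition \ref{prop:sp.T}.\ref{prop:sp.T.iii} this set is discrete. The key step is the scaling: for real $\lambda>0$, the unitary dilation $(U_\alpha \psi)(x) = \alpha^{1/2}\psi(\alpha x)$ with $\alpha = (2\lambda)^{1/(2n+2)}$ conjugates $-\Delta + 2\lambda x^{2n}$ to $(2\lambda)^{1/(n+1)} S_n$, so its spectrum is $\{(2\lambda)^{1/(n+1)}\mu_k(n) : k\in\N_0\}$. By Lemma \ref{lem:h.def} the operator $-\Delta + 2\lambda x^{2n}$ is a holomorphic family of type (A) with compact resolvent on $\Cla$, so its eigenvalues extend holomorphically; matching with the principal branch of $(2\lambda)^{1/(n+1)}$, the formula persists for all $\lambda \in \Cla$. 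Hence $0\in\sigma(T(\lambda))$ iff
\[
(2\lambda)^{1/(n+1)}\mu_k(n) = -(\lambda^2 + 2\lambda\fra_0 + \frq_0)
\]
for some $k$; raising both sides to the $(n+1)$-th power yields the stated polynomial identity $(\lambda^2+2\lambda\fra_0+\frq_0)^{n+1} = 2\lambda(-\mu_k(n))^{n+1}$. The bounds $\Re\lambda\leq -\fra_0$ and $|\lambda|^2\geq \frq_0$ follow from Proposition \ref{prop:sp.T}.\ref{prop:sp.T.ii} since $\essinf a_n = \fra_0$ and $\inf\sigma(-\Delta+\frq_0)=\frq_0$, and the complex-conjugate pairing from \eqref{T.Jsa}; the subtlety is that raising to the $(n+1)$-th power may introduce spurious roots, so the genuine eigenvalues are the roots in $\{\Re\lambda\leq 0,\ \Im\lambda>0\}$ compatible with the principal branch.

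Finally, for the asymptotic \eqref{la_k.n.a0}, a dominant balance in $(2\lambda)^{1/(n+1)}\mu_k(n) = -(\lambda^2 + 2\lambda\fra_0 + \frq_0)$ as $k\to\infty$ gives $\lambda^2 \sim -(2\lambda)^{1/(n+1)}\mu_k(n)$, equivalently $\lambda^{2n+1}\sim 2(-\mu_k(n))^{n+1}$; the unique $(2n+1)$-th root with $\Im\lambda>0$ compatible with the principal branch has argument $(n+1)\pi/(2n+1)\in(\pi/2,\pi)$, yielding
\[
\lambda^{(0)}_k = 2^{1/(2n+1)}\e^{\ii(n+1)\pi/(2n+1)}\mu_k(n)^{(n+1)/(2n+1)}.
\]
Writing $\lambda = \lambda_k^{(0)} + \delta$, linearizing in $\delta/\lambda_k^{(0)}$ and $\fra_0/\lambda_k^{(0)}$, and using $(\lambda_k^{(0)})^2 = -(2\lambda_k^{(0)})^{1/(n+1)}\mu_k(n)$ produces $\delta\,(2 - 1/(n+1)) = -2\fra_0 + o_k(1)$, i.e.\ $\delta = -\tfrac{2(n+1)}{2n+1}\fra_0(1+o_k(1))$, with the $\frq_0$-contribution absorbed in the error. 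For $n=1$ with $\fra_0=\frq_0=0$, $\mu_k(1) = 2k+1$ and the formula collapses to the stated explicit expression $\lambda_k(1,0,0) = 2^{1/3}\e^{\ii 2\pi/3}(2k+1)^{2/3}$.
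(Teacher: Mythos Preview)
Your overall strategy is correct and matches the paper's: Theorem~\ref{thm:Gspe} for $(-\infty,0]\subset\se{2}(G_n)$, Theorem~\ref{thm:GT} together with Proposition~\ref{prop:sp.T} for the non-real part, and explicit computation of $\sigma(T_n(\la))$. Your derivation of the asymptotic \eqref{la_k.n.a0} by dominant balance and linearization is in fact more detailed than the paper, which simply writes ``after several manipulations, \eqref{mu.la.rel} yields the asymptotic formula''.

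The one substantive methodological difference is in the determination of the point spectrum of $-\Delta+2\la x^{2n}$ for $\la\in\Cla$. You argue by unitary dilation for $\la>0$ followed by analytic continuation of the eigenvalue branches $(2\la)^{1/(n+1)}\mu_k(n)$ using the type~(A) holomorphy and compact resolvent from Lemma~\ref{lem:h.def} and Theorem~\ref{thm:dom.H.gamma}. The paper instead performs the complex change of variable $x=2^{-1/(2n+2)}\la^{-1/(2n+2)}z$, which sends the eigenvalue equation to $-w''+z^{2n}w=\mu w$ on the rotated line $\e^{\ii\arg(\la)/(2n+2)}\R$, and then invokes Sibuya's classical theory of Stokes sectors (\cite{Sibuya-1975}): since $|\arg(\la)/(2n+2)|<\pi/(2n+2)$, this line stays inside the same pair of Stokes sectors $S_0$ and $S_{n+1}$ as the real axis, so the $L^2$-decay condition is equivalent to that of the self-adjoint problem $S_n$ on $\R$. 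Both routes yield the relation $2(-\mu)^{n+1}\la=(\la^2+2\la\fra_0+\frq_0)^{n+1}$. Your approach is operator-theoretic and avoids ODE asymptotics, but the continuation step needs one extra sentence to be airtight: you must exclude eigenvalues appearing from infinity as $\la$ leaves $(0,\infty)$, e.g.\ by a contour-counting argument (the total algebraic multiplicity inside any circle avoiding the branches $(2\la)^{1/(n+1)}\mu_k(n)$ is locally constant in $\la$, and the branches never collide since the $\mu_k(n)$ are distinct). The paper's Sibuya argument handles this implicitly because the decay/blow-up dichotomy in each Stokes sector gives a complete classification of $L^2$-solutions.
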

\begin{proof}
	Both the damping $a_n$ and the potential $q$ clearly satisfy Assumption~\ref{asm:a.r}.
	
	That $(-\infty,0) \subset \se{2}(G_n)$ follows from Theorem~\ref{thm:Gspe}  since $a_n'(x)/a_n(x) = \BigO(1/x)$ and $q(x) = \os(a_n(x))$ as
	$x \to + \infty$; because the spectrum is closed, we obtain $(-\infty,0] \subset \sigma(G_n)$.
	
	Since $a_n$ is unbounded at infinity and hence satisfies Assumption~\ref{asm:a.inf}, Theorem~\ref{thm:GT} and 
	Proposition~\ref{prop:sp.T}~\ref{prop:sp.T.i} imply that
	$\sigma(G_n) \setminus (-\infty,0]$ consists only of 
	complex conjugate pairs of eigenvalues $\lambda_k{(n,\fra_0,\frq_0)}$, $k\in\N_0$, 
	in the closed left half plane of finite multiplicity which satisfy $0\in\sigma_{\rm disc}(T_n(\la))$.
	Thus, it suffices to consider $\la \in \C$ with $\Re \la \leq 0 $ and $\Im \la >0$ and we search for solutions $y \in \Dom(T_n(\la))$, $ y \neq 0$, of	
\begin{equation}\label{y.eq}
-y''(x) + 2 \lambda x^{2n} y(x) = - (\lambda^2 + 2 \la \fra_0 + \frq_0) y(x), \quad  x \in \R
\end{equation}
The (complex) change of variable 
\begin{equation}
x = 2^{-\frac{1}{2n+2}} \lambda^{-\frac{1}{2n+2}} z, \quad x \in \R,
\end{equation}
leads to the equation
\begin{equation}\label{w.eq}
-w''(z) + z^{2n} w(z) = \mu w(z), \quad z \in \e^{\ii \frac{\arg(\lambda)}{2n+2}} \R, 
\end{equation}
where 
\begin{equation}\label{mu.la.rel}
2 (-\mu)^{n+1} \lambda = 
\left(\lambda^2 + 2 \lambda \fra_0 + \frq_0
\right)^{n+1}.
\end{equation}
Equation~\eqref{w.eq} with complex $z$ was studied  extensively in~\cite{Sibuya-1975}. It is known that every solution of \eqref{w.eq} either decays or blows up exponentially in each Stokes sector
\begin{equation}
S_k:=\left\{ z \in \C: \left|\arg z - \frac{k \pi}{n+1}\right| < \frac{\pi}{2n+2} \right\}, \ k \in \Z.
\end{equation}
Therefore, for \eqref{y.eq} to have a solution $y \in \Dom(T_n(\la))$, it is necessary that $\eqref{w.eq}$ has a solution decaying both in $S_0$ and $S_{n+1}$.
Thus, in fact it suffices to search for decaying solution of $\eqref{w.eq}$ for real $z$, \ie~to investigate the eigenvalues of~\eqref{anho.def},
and, after several manipulations, \eqref{mu.la.rel} yields the asymptotic formula \eqref{la_k.n.a0}.
\end{proof}
As an illustration of Proposition~\ref{prop:x2n.ex}, the spectrum of $G_1$ with $\frq_0 =0$, and $\fra_0=0$ and $\fra_0=3$ is plotted in Figure \ref{fig.x2}.
\begin{figure}[htb!]
\includegraphics[width=0.9 \textwidth]{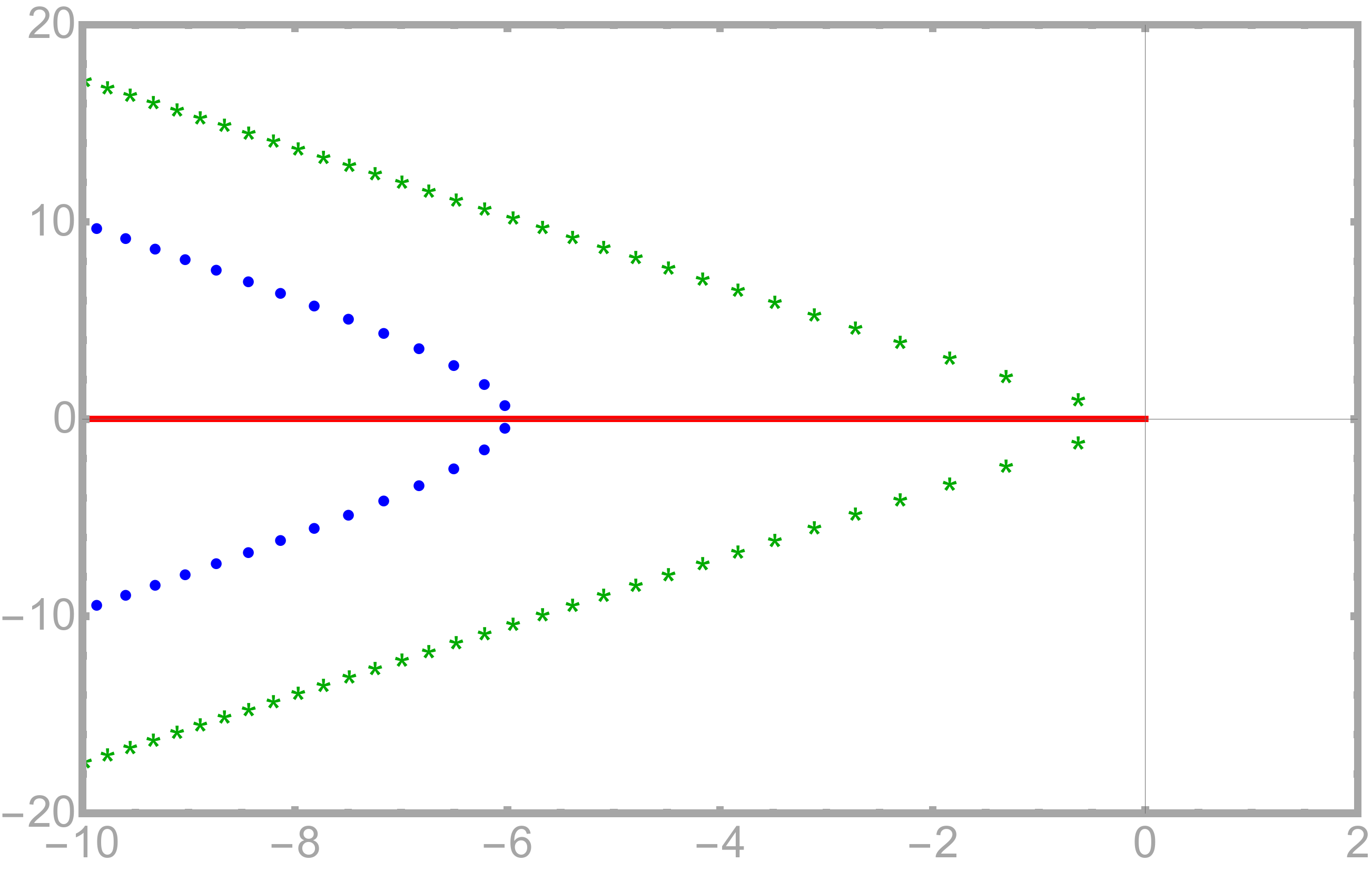}
\caption{Spectrum of $G_1$ with $a_1(x) \!=\! x^2$ and $q \!\equiv\! 0$ in $\Omega \!=\! \R$, \cf~Propo\-sition \ref{prop:x2n.ex} 
with $\fra_0\!=\!0$ (stars/green), $\fra_0\!=\!3$ (dots/blue); the essential spectrum is the semiaxis $(-\infty,0]$ (bold/red) in both cases.}
\label{fig.x2}
\end{figure}

\begin{remark}\label{rem:x.inf}
	Proposition~\ref{prop:x2n.ex} illustrates the convergence of eigenvalues
	\begin{equation}
	\la_k(n,\fra_0,\frq_0) \lto \la_k(\infty,\fra_0,\frq_0), \quad  n \to \infty,
	\end{equation}	
	proved in Theorem~\ref{thm:sp.conv}, where $\{\la_k(\infty,\fra_0,\frq_0)\}$ 
	are the solutions of $0\in \sigma_{\rm disc}(T_\infty(\la))$
	with $\Omega_\infty=(-1,1)$ and $a_\infty=a_0$, \ie~
	\begin{equation}
	T_\infty(\la) = \Dt + \frq_0 + 2 \la \fra_0 + \la^2, \quad \la \in \C,
	\end{equation}
	with Dirichlet boundary conditions at $\pm 1$. 
	It is not difficult to see that the solutions of $0\in \sigma_{\rm disc}(T_\infty(\la))$
	are given by
	\begin{equation}\label{a.inf.EV}
	\lambda_k(\infty,\fra_0,\frq_0) = -\fra_0 \pm \ii \, \sqrt{\mu_k + \frq_0} \, \sqrt{1- \frac{\fra_0^2}{\mu_k + \frq_0}}, 
	\quad \mu_k = \left(\frac{\pi k}{2} \right)^2, \quad   k \in \N.
	\end{equation}
	Note also that we cannot expect uniform convergence in $k$ since, for $\fra_0=0$, the eigenvalues of $T_n$ lie on the two rays
	$\e^{ \pm \ii \frac{n+1}{2n +1} \pi} \R_+$, while the eigenvalues of $T_\infty$, with the possible exception of finitely many,
	lie on the vertical line with $\Re \la = -\fra_0$.
\end{remark}

\subsection{Examples for $d>1$}
\label{subsec:strip}

In higher dimensions analogous examples with
\begin{equation}
a(x) = \sum_{j=1}^d x_j^{2n} + \fra_0, \quad x \in \Omega=\Rd,  \quad \fra_0 \geq 0,
\end{equation}
and a constant potential $q(x) =\frq_0 \geq 0$, $x \in \Rd$, can be analyzed. In particular, the case with $\frq_0=0$ and $d \geq 3$ fits into the assumptions considered in \cite{Ikehata-2017} where a polynomial estimate for the energy decay of the solution was established. In fact,  exponential energy decay cannot occur as Theorem~\ref{thm:Gspe} shows that the essential spectrum of the corresponding operators $G_n$ covers $(-\infty,0]$ for each $n \in \N$; note that it is easy to see that the conditions of Theorem~\ref{thm:Gspe} are satisfied on $U_1$, which is a tube if $d=3$, \cf~\eqref{U.om.def}. The whole non-real part of the spectrum of $G_n$ consists of eigenvalues satisfying 
\begin{equation}
\spp(G_n) \subset \{ \la \in \Cla \,:\, \Re \la \leq - \fra_0, \ |\la|^2 \geq \frq_0 \},
\end{equation}	
\cf~Proposition~\ref{prop:sp.T}.
In fact, separation of variables yields that there are non-real eigenvalues asymptotically approaching rays as for $d=1$, \cf~Proposition~\ref{prop:x2n.ex}, except that now the
corresponding multiplicities depend on the dimension $d$.

For $d=2$ an example with interesting spectrum is obtained for the damped wave equation on a strip of the form
\begin{equation}\label{strip.def}
\Omega = \R \times (-\ell,\ell), \quad \ell >0,
\end{equation}
and with damping $a$ unbounded along the longitudinal direction corresponding to the first variable $x$. As a particular example, we 
consider $a(x,y) = x^2 +\fra_0$ and $q(x,y) = \frq_0 \geq 0$, $(x,y) \in \Omega$. Notice that the associated quadratic operator function
can be viewed as the limit of $T_n(\la)= -\Delta + \frq_0 + 2\la (x^2+ \fra_0 + y^{2n}) + \la^2$ acting in $L^2(\R^2)$ as $n \to \infty$ 
in the sense of Theorem~\ref{thm:sp.conv}.

\begin{proposition}\label{prop:T.strip}
Let $\Omega=\R\times(-\ell,\ell)$ with $\ell>0$, let $G$ be as in \eqref{G.def} with $q(x,y)=\frq_0 \geq 0$ and $a(x,y) = x^2 + \fra_0$, $(x,y)\in\Omega$, where $\fra_0 \geq 0$. 
Then
\begin{equation}
\sigma(G) = (-\infty, 0] \  \dot\cup \!\!\!\! \bigcup_{j \in \N, k \in \N_0} \!\!\! \left\{
\lambda_{jk}{(\fra_0,\frq_0)},\lambda_{jk}{(\fra_0,\frq_0)} 
\right\} \subset \{ \la \in \C \, : \, \Re \la \le 0 \}, 
\end{equation}
where $\la_{jk}(\fra_0,\frq_0)$, $j \in \N$, $k \in \N_0$, are the solutions of 
\begin{equation}\label{strip.ev}
2 \lambda (2k+1)^2 = 
\left[\lambda^2 +\left(\frac{j\pi}{2\ell}\right)^2 + 2\lambda \fra_0 + \frq_0 \right]^2, 
\quad \Re \la \leq 0, \quad \Im \la >0.
\end{equation}
Moreover, all non-real eigenvalues satisfy
\[
  \sigma(G) \setminus \R \subset \{ \la \in \C \, : \, \Re \la \le - \fra_0, \, |\la| \ge \frq_0 \},
\]
and, for fixed $k \in \N_0$, each sequence of eigenvalues $\{\lambda_{jk}(\fra_0,\frq_0)\}_j$ satisfies  
\[
\Re\la_{jk}(\fra_0,\frq_0) \, \nearrow -\fra_{0}, \quad j \to +\infty.
\]
\end{proposition}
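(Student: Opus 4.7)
The proof has three parts: (a) establishing $(-\infty,0] \subset \sigma(G)$, (b) characterizing $\sigma(G)\cap\Cla$ by separation of variables in $T(\lambda)$, and (c) extracting the asymptotics of $\Re\lambda_{jk}$ as $j\to\infty$.

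For part (a), I would apply Theorem~\ref{thm:Gspe} on the tubular neighbourhood $U_\omega = \R_+\times(-\ell,\ell)\subset\Omega$ of the ray $\Gamma=\{(x,0):x>0\}$ with the constant weight $\omega\equiv 1/\ell$. For any fixed $\lambda<0$, decompose
\[
q(x,y) + 2\lambda a(x,y) + \lambda^2 = -A(x), \qquad A(x):= -2\lambda x^2 - 2\lambda\fra_0 - \lambda^2 - \frq_0,
\]
with perturbation $B\equiv 0$. Since $-2\lambda>0$, $A(x)\to+\infty$ and $|A'(x)|/A(x)=\BigO(1/x)$ as $x\to+\infty$, so \eqref{A.lim} and \eqref{B.A.lim} hold; the conditions in \eqref{om.reg} are trivial for a constant $\omega$. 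Theorem~\ref{thm:Gspe} then gives $\lambda\in\se{2}(G)$ for every $\lambda<0$, and closedness of the spectrum produces $(-\infty,0]\subset\sigma(G)$.

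For part (b), note that $a(x,y)=x^2+\fra_0$ and the constant $q$ trivially satisfy Assumption~\ref{asm:a.r} (take $\as=0$, and $|\nabla\ar|=2|x|\leq\eps\ar^{3/2}+C$) as well as Assumption~\ref{asm:a.gr} (the strip is bounded in $y$). By Theorem~\ref{thm:GT} and Proposition~\ref{prop:sp.T} the non-real spectrum of $G$ is the set of $\lambda\in\Cla$ with $0\in\sigma(T(\lambda))$. Writing $L^2(\Omega)=\bigoplus_{j\in\N}L^2(\R)\otimes\C\psi_j$, where $\psi_j(y)=\sin(j\pi(y+\ell)/(2\ell))$ is the orthonormal Dirichlet eigenbasis of $-\partial_y^2$ with eigenvalues $(j\pi/(2\ell))^2$, the closed operator $T(\lambda)$ is reduced to the family of operators $H_\lambda^x+(j\pi/(2\ell))^2+2\lambda\fra_0+\lambda^2+\frq_0$ on $L^2(\R)$, where $H_\lambda^x=-\partial_x^2+2\lambda x^2$. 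For $\lambda\in\Cla$ this complex harmonic oscillator is a special case of the operator family in Lemma~\ref{lem:h.def}, has compact resolvent, and, via the complex dilation $x\mapsto(2\lambda)^{-1/4}x$ applied to the Hermite basis (justified by holomorphy in $\lambda$ and analytic continuation from $\lambda>0$), has purely discrete spectrum $\{(2k+1)\sqrt{2\lambda}:k\in\N_0\}$. Hence $0\in\sigma(T(\lambda))$ is equivalent to
\[
(2k+1)\sqrt{2\lambda} + (j\pi/(2\ell))^2 + 2\lambda\fra_0 + \lambda^2 + \frq_0 = 0
\]
for some $(j,k)\in\N\times\N_0$, and squaring produces \eqref{strip.ev}. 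The bounds $\Re\lambda\leq-\fra_0$ and $|\lambda|^2\geq\inf\sigma(-\Delta+q)\geq\frq_0$ come from Proposition~\ref{prop:sp.T}.\ref{prop:sp.T.ii}, and the complex-conjugate pairing from Proposition~\ref{prop:sp.T}.\ref{prop:sp.T.i} together with the reality of $a,q$.

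For part (c), fix $k$ and set $\mu_j=(j\pi/(2\ell))^2$. Inserting the ansatz $\lambda_{jk}+\fra_0=i\sqrt{\mu_j}\,(1+\varepsilon_j)$ into \eqref{strip.ev} and keeping the two leading powers of $\mu_j$ reduces it to
\[
\bigl(-2\mu_j\varepsilon_j + \frq_0 - \fra_0^2\bigr)^2 \sim 2i\sqrt{\mu_j}\,(2k+1)^2,
\]
forcing $\varepsilon_j=\BigO(\mu_j^{-3/4})$; the branch selected by the constraint $\Re\lambda_{jk}\leq-\fra_0$ from Proposition~\ref{prop:sp.T}.\ref{prop:sp.T.ii} yields
\[
\Re\lambda_{jk}=-\fra_0-\tfrac{2k+1}{2}\,\mu_j^{-1/4}+\os(\mu_j^{-1/4}),\qquad j\to\infty,
\]
whence $\Re\lambda_{jk}\nearrow-\fra_0$. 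The main obstacle I anticipate is the rigorous separation of variables in (b): identifying the spectrum of the non-self-adjoint $T(\lambda)$ with the arithmetic sum of the component spectra, and correctly handling the domain of the complex harmonic oscillator $H_\lambda^x$ by analytic continuation from real $\lambda$.
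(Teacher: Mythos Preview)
Your proof is correct and follows essentially the same route as the paper's: Theorem~\ref{thm:Gspe} with constant $\omega\equiv 1/\ell$ and $B\equiv 0$ for the negative real axis, Theorem~\ref{thm:GT} together with Proposition~\ref{prop:sp.T} for the non-real part, and separation of variables in the Dirichlet eigenbasis of $(-\ell,\ell)$ to reduce to one-dimensional complex harmonic oscillators. The only noteworthy difference is that the paper obtains the spectrum of $-\partial_x^2+2\lambda x^2$ by invoking Proposition~\ref{prop:x2n.ex}, whose proof goes through Sibuya's Stokes-sector analysis of $-w''+z^{2n}w=\mu w$, whereas you use the equivalent (and, for $n=1$, more elementary) argument via complex dilation of the Hermite functions and analytic continuation in $\lambda$; both yield the same relation $(2k+1)\sqrt{2\lambda}=-[\lambda^2+2\lambda\fra_0+\frq_0+\sigma_j]$ before squaring. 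Your asymptotic computation in part~(c) is more explicit than the paper's, which simply records $\lambda_{jk}=\ii j\pi/(2\ell)-\fra_0+\BigO(j^{-1/2})$ after a formal inspection of~\eqref{strip.ev}; your refined expansion $\Re\lambda_{jk}=-\fra_0-\tfrac{2k+1}{2}\mu_j^{-1/4}+\os(\mu_j^{-1/4})$ is consistent with this and in fact gives the monotone approach $\nearrow -\fra_0$ more transparently. The concern you flag about rigorously identifying $\sigma(T(\lambda))$ with the union over $j$ is handled in the paper exactly as you suggest, by the orthonormal-basis property of the Dirichlet eigenfunctions, so no additional idea is needed there.
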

\begin{proof}
It is easy to see that $a$ and $q$ satisfy Assumption~\ref{asm:a.r} and~\ref{asm:a.inf}, and hence Theorem~\ref{thm:GT} implies that $\sigma(G) \setminus (-\infty,0]$ consists only of eigenvalues of finite multiplicity 
which may only accumulate at $(-\infty,0]$.

It also not difficult to check that the assumptions of Theorem~\ref{thm:Gspe} are satisfied on a strip $U_{\omega}$ with $\omega(x) = \ell^{-1}$, \cf~\eqref{U.om.def}, \and $B \equiv 0$, which yields
$(-\infty, 0) \subset \se{2}(G)$ and hence also $0 \in \sigma(G)$ since the spectrum is closed.

Since $\essinf{a}=\fra_0$, the first statement on the localization of the real parts of eigenvalues follows from Proposition~\ref{prop:sp.T}.\ref{prop:sp.T.ii}.

The more detailed properties of the eigenvalue sequences $\{\lambda_{jk}(\fra_0,\frq_0)\}_j$ do not follow from our abstract results. They will be obtained using the associated quadratic operator function $T(\la)$ and separation of variables, \ie~by searching for eigenfunctions of the form
$\psi(x,y):=f(x)g(y)$, $(x,y)\in\Omega$. 
The spectral problem in the $y$-variable reduces to the problem
\begin{equation}\label{Dir.EV}
\begin{aligned}
-g''(y)  = \sigma g(y), \qquad 
g(\pm \ell)  =0,
\end{aligned}
\end{equation}
which has the Dirichlet eigenvalues $\sigma_j=(j\pi /(2\ell))^2,$ $j \in \N$. In the $x$-variable, we are left with a family of spectral problems in
$L^2(\R)$ for $T_1(\la) + \sigma_j$ where $T_1(\la) = -\dd^2/\dd x^2 + \frq_0 + 2\la (x^2+\fra_0) + \la^2$, $\la\in\Cla$, is the quadratic operator function analyzed in
Proposition~\ref{prop:x2n.ex}. The change of variables and Stokes sectors argument as in the proof of Proposition~\ref{prop:x2n.ex} yield the algebraic
equation~\eqref{strip.ev} for the values of $\lambda$ for which $0\in \sigma(T_1(\la) + \sigma_j)$.

Since the eigenfunctions of the Dirichlet problem in $L^2((-\ell,\ell))$ form an orthonormal basis of this space, it can be shown that indeed
$\sigma(T) = \cup_{j \in \N} \, \sigma (T_1 + \sigma_j)$.

Finally, a (formal) inspection of equation~\eqref{strip.ev} shows that for fixed $k \in\N_0$ the eigenvalues with positive real parts are of the form
\begin{equation}\label{eigasympt}
\lambda_{jk}(\fra_0,\frq_0) = \frac{\ds \pi \ii}{\ds 2\ell} -\fra_{0} + \BigO\left(j^{-1/2}\right), \quad j \to +\infty,
\end{equation}
which proves the last claim.
\end{proof}

The eigenvalues of $G$ in Proposition~\ref{prop:T.strip},
computed from equation~\eqref{strip.ev}, are shown in Figure~\ref{fig.strip} for the case $\frq_0=0$ and $\fra_0=0$; there 
the sequences given by~\eqref{eigasympt} are clearly visible for each value of~$k\in\N_0$.
\begin{figure}[htb!]
\includegraphics[width=0.9 \textwidth]{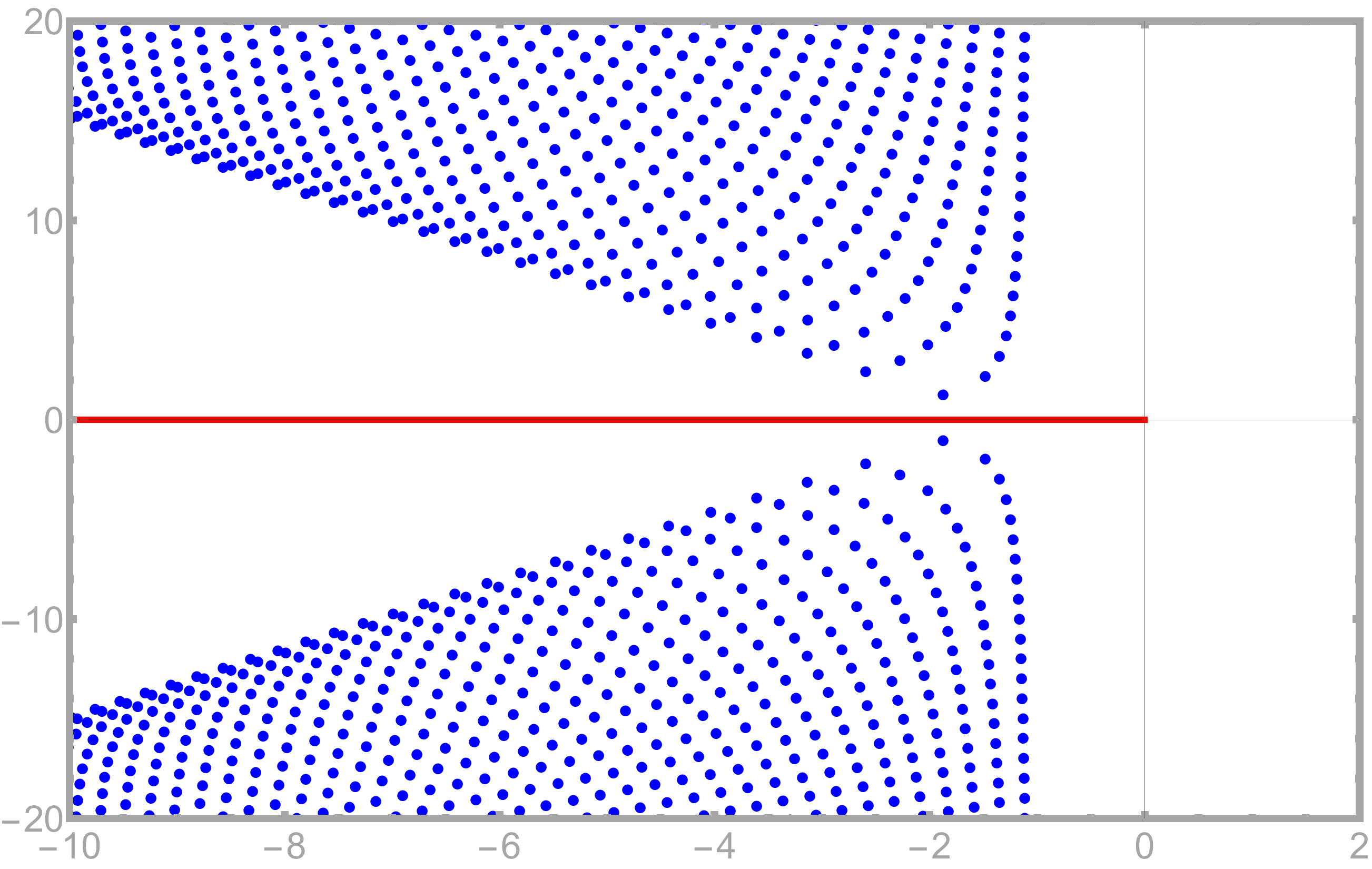}
\caption{Spectrum of $G$ with $a(x,y) = x^2$ and $q \equiv 0$ in $\Omega = \R\times(-\ell,\ell)$,  \cf~Proposition~\ref{prop:T.strip} with $\fra_0\!=\!1$, with eigenvalues $\{\lambda_{jk}{(1,0)}\}_{jk} \cup \{\ov{\lambda_{jk}{(1,0)}}\}_{jk}$ (dots/blue) and
the essential spectrum on the semiaxis $(-\infty,0]$ (bold/red).
}
\label{fig.strip}
\end{figure}
%


{\footnotesize
	\bibliographystyle{acm}
	\bibliography{references}

\begin{thebibliography}{10}

\bibitem{Adams-2003}
{\sc Adams, R.~A., and Fournier, J. J.~F.}
\newblock {\em {Sobolev spaces}}, 2nd~ed.
\newblock Elsevier, Amsterdam, 2003.

\bibitem{Boegli-2014-PhD}
{\sc B{\"o}gli, S.}
\newblock {\em {Spectral approximation for linear operators and applications}}.
\newblock PhD thesis, University of Bern, 2014.

\bibitem{Boegli-2017-42}
{\sc B{\"o}gli, S., Siegl, P., and Tretter, C.}
\newblock {A}pproximations of spectra of {S}chr{\"o}dinger operators with
  complex potential on {$\R^d$}.
\newblock {\em Comm. Partial Differential Equations 42\/} (2017), 1001--1041.

\bibitem{Davies-1989}
{\sc Davies, E.~B.}
\newblock {\em {Heat kernels and spectral theory}}.
\newblock Cambridge University Press, 1989.

\bibitem{Davies-1995}
{\sc Davies, E.~B.}
\newblock {\em {Spectral theory and differential operators}}.
\newblock Cambridge University Press, 1995.

\bibitem{Davies-2007}
{\sc Davies, E.~B.}
\newblock {\em {Linear operators and their spectra}}.
\newblock Cambridge University Press, 2007.

\bibitem{EE}
{\sc Edmunds, D.~E., and Evans, W.~D.}
\newblock {\em {Spectral Theory and Differential Operators}}.
\newblock Oxford University Press, New York, 1987.

\bibitem{Everitt-1978-79}
{\sc Everitt, W.~N., and Giertz, M.}
\newblock {Inequalities and separation for {S}chr\"odinger type operators in
  {$L_{2}({\bf R}^{n})$}}.
\newblock {\em Proc. Roy. Soc. Edinburgh Sect. A 79\/} (1978), 257--265.

\bibitem{Freitas-1999-78}
{\sc Freitas, P.}
\newblock {Spectral sequences for quadratic pencils and the inverse spectral
  problem for the damped wave equation}.
\newblock {\em J. Math. Pures Appl. 78}, 9 (1999), 965--980.

\bibitem{Freitas-1996-132}
{\sc Freitas, P., and Zuazua, E.}
\newblock {Stability results for the wave equation with indefinite damping}.
\newblock {\em J. Differential Equations 132\/} (1996), 338--352.

\bibitem{Ikehata-2017}
{\sc Ikehata, R., and Takeda, H.}
\newblock {Uniform energy decay for wave equations with unbounded damping
  coefficients}.
\newblock arXiv:1706.03942 [math.AP], 2017.

\bibitem{Jacob-2017}
{\sc Jacob, B., Tretter, C., and Trunk, Carsten und~Vogt, H.}
\newblock Systems with strong damping and their spectra.
\newblock Math. Methods Appl. Sci., 2017.
\newblock submitted.

\bibitem{Jacob-2009-79}
{\sc Jacob, B., and Trunk, C.}
\newblock {Spectrum and analyticity of semigroups arising in elasticity theory
  and hydromechanics.}
\newblock {\em Semigroup Forum 79\/} (2009), 79--100.

\bibitem{Kato-1952-125}
{\sc Kato, T.}
\newblock {Notes on some inequalities for linear operators}.
\newblock {\em Math. Ann. 125\/} (1952), 208--212.

\bibitem{Kato-1966}
{\sc Kato, T.}
\newblock {\em {Perturbation theory for linear operators}}.
\newblock Springer-Verlag, Berlin, 1995.

\bibitem{Krejcirik-2017-221}
{\sc Krej{\v{c}}i{\v{r}}{\'i}k, D., Raymond, N., Royer, J., and Siegl, P.}
\newblock {N}on-accretive {S}chr{\"o}dinger operators and exponential decay of
  their eigenfunctions.
\newblock {\em Israel J. Math. 221\/} (2017), 779--802.

\bibitem{Langer-2006-267}
{\sc Langer, H., Najman, B., and Tretter, C.}
\newblock {Spectral Theory of the Klein-Gordon Equation in Pontryagin Spaces}.
\newblock {\em Comm. Math. Phys. 267\/} (2006), 159--180.

\bibitem{Nakao-2001-238}
{\sc Nakao, M.}
\newblock {Energy decay for the linear and semilinear wave equations in
  exterior domains with some localized dissipations}.
\newblock {\em Math. Z. 238\/} (2001), 781--797.

\bibitem{Nashed-1976}
{\sc Nashed, M.~Z., and Votruba, G.~F.}
\newblock {\em A unified operator theory of generalized inverses}.
\newblock Academic Press, New York, 1976.

\bibitem{Reed4}
{\sc Reed, M., and Simon, B.}
\newblock {\em {Methods of Modern Mathematical Physics, Vol. 4: Analysis of
  Operators}}.
\newblock Academic Press, New York-London, 1978.

\bibitem{Sibuya-1975}
{\sc Sibuya, Y.}
\newblock {\em {Global theory of a second order linear ordinary differential
  equation with a polynomial coefficient}}.
\newblock North-Holland Publishing Co., Amsterdam, 1975.

\bibitem{Titchmarsh-1954-5}
{\sc Titchmarsh, E.~C.}
\newblock {On the asymptotic distribution of eigenvalues}.
\newblock {\em Q. J. Math. 5\/} (1954), 228--240.

\bibitem{Tretter-2008}
{\sc Tretter, C.}
\newblock {\em {Spectral Theory of Block Operator Matrices and Applications}}.
\newblock Imperial College Press, 2008.

\bibitem{Vainikko-1976}
{\sc Vainikko, G.}
\newblock {\em {Funktionalanalysis der Diskretisierungsmethoden}}.
\newblock B. G. Teubner Verlag, Leipzig, 1976.

\bibitem{Zuazua-1991-70}
{\sc Zuazua, E.}
\newblock {Exponential decay for the semilinear wave equation with localized
  damping in unbounded domains}.
\newblock {\em J. Math. Pures Appl. 70\/} (1991), 513--529.

\end{thebibliography}
}

\end{document}